\documentclass{article}%
\usepackage{amsmath}
\usepackage{amsfonts}
\usepackage{amssymb}
\usepackage{graphicx}%
\usepackage{mathrsfs}
\setcounter{MaxMatrixCols}{30}
\providecommand{\U}[1]{\protect \rule{.1in}{.1in}}
\newtheorem{theorem}{Theorem}[section]

\newtheorem{corollary}[theorem]{Corollary}

\newtheorem{definition}[theorem]{Definition}
\newtheorem{example}[theorem]{Example}

\newtheorem{lemma}[theorem]{Lemma}

\newtheorem{proposition}[theorem]{Proposition}
\newtheorem{remark}[theorem]{Remark}

\newenvironment{proof}[1][Proof]{\noindent \textbf{#1.} }{\  \rule{0.5em}{0.5em}}
\begin{document}

\title{Quasi-continuous random variables and processes
under the $G$-expectation framework}
\author{Mingshang Hu \thanks{Zhongtai Institute of Finance, Shandong University,
humingshang@sdu.edu.cn. Research supported by NSF (No. 11201262 and 11301068) and Shandong Province (No. BS2013SF020 and ZR2014AP005)}
\and Falei Wang\thanks{Institute for Advanced Research and School of Mathematics, Shandong University,
flwang2011@gmail.com. Research supported  by the China Postdoctoral Science Foundation (No.2015M582068) and Fundamental Research Funds of Shandong University (No.2015GN023)} \and Guoqiang Zheng\thanks{School of Mathematics, Shandong University,  zhengguoqiang.ori@gmail.com.
Hu, Wang and Zheng's research was
partially supported by NSF (No.10921101) and by the 111
Project (No.B12023)}}
\date{}
\maketitle
\begin{abstract}
In this paper,  we first use PDE techniques and probabilistic methods to identify a kind
of quasi-continuous random variables. Then we  give a
characterization of the $G$-integrable processes and get a kind of
quasi-continuous processes by Krylov's estimates.
 This result is  useful for the development of $G$-stochastic analysis theory.
Moreover, it  also provides a tool for the study of  the non-Markovian It\^{o} processes.
\end{abstract}

\textbf{Key words}: $G$-expectation, $G$-Brownian motion,
quasi-continuous, Krylov's estimates.

\textbf{MSC-classification}: 60H10, 60H30
\section{Introduction}
 Motivated by model uncertainty in
finance, Peng \cite{Peng2004,
Peng2005} firstly constructed a kind of dynamically
consistent fully nonlinear expectations  through PDE approach. An important case is the $G$-expectation
$\mathbb{\hat{E}}[\cdot]$ and  the  corresponding   canonical process
$(B_{t})_{t\geq0}$ is called $G$-Brownian motion analogous to the classical Wiener process.
Under the $G$-expectation framework, the corresponding stochastic
calculus of It\^{o}'s type  was also established in Peng  \cite{P07a, P08a}.

The $G$-expectation can be also seen as a upper expectation. Indeed, Denis et. al. \cite{DHP11} obtained a representation theorem of
$G$-expectation
$\mathbb{\hat{E}}[\cdot]$ by stochastic control method:%
\[
\mathbb{\hat{E}}[X]=\sup_{P\in \mathcal{P}}E_{P}[X]\text{ for }X\in
L_{ip}(\Omega),
\]
where $\mathcal{P}$ is a family of weakly compact probability
measures on $(\Omega,\mathcal{B}(\Omega))$. Moreover, they gave a
characterization of
the space $L_{G}^{p}(\Omega)$ and proved that every random variable in $L_{G}%
^{p}(\Omega)$ is   quasi-continuous.
The representation theorem was also obtained in \cite{HP09} by
a simple probabilistic method.

The present article is devoted to the study of  integrable random variables  and stochastic processes in the $G$-expectation framework.
The classical Lusin's theorem indicates   each random variable is
``quasi-continuous''  in  a probability space. However,
it is difficult to verify a random variable is quasi-continuous in the $G$-expectation framework, since
the measures in $\mathcal{P}$
may be mutually singular.  This problem  has restricted the development of the $G$-stochastic analysis theory.
For example, it is difficult to construct the approximation of an admissible control  to get the dynamic
 programming
 principle
 for $G$-stochastic control problems and we cannot
use the approximation theory of measurable function to prove the Markov property of the $G$-stochastic differential equations.

To overcome this difficult, we use PDE techniques and stochastic control  methods to
obtain some polar sets associated to $X$, which is  a multi-dimensional $G$-It\^{o} process. Based on these polar sets,
we prove some ``irregular'' Borel measurable  functions on $(\Omega, \mathcal{B}(\Omega))$ are quasi-continuous, which implies the
space $L^p_G(\Omega)$ contains enough elements such as
$I_{\{X_{t}\in [a,b]\}}$. Thus the approximation of quasi-continuous random variables    through simple functions is possible.
Indeed, Hu and Ji \cite{HJ2} studied the $G$-stochastic control problems with the help of this result.
In $1$-dimensional case, Martini \cite{MC} also got some polar sets by a pure probabilistic approach. By our arguments, we also obtain the convergence rate,
which enables us to study the
sample path properties of the non-Markovian It\^{o} processes, such as the differentiability  and the maxima.

The similar questions  arise for the $G$-integrable processes, and the rest of this paper is devoted to studying the space $M_G^p(0,T)$.
First, we give a characterization of $M_{G}^{p}(0,T)$, which non-trivially generalizes the result of \cite{DHP11}. Moreover,
we establish a monotone convergence theorem for quasi-continuous processes.
Next we  apply Krylov's estimates to get a kind of quasi-continuous
processes. In particular, these estimates induce a  weak dominated convergence theorem for  $G$-It\^{o} processes, which is useful for
the study of $G$-stochastic analysis. For example,   this result can be used  to deal with the well-posedness of $G$-backward stochastic differential equations under non-Lipschitz
condition.

This paper is organized as follows. In section 2, we recall some
necessary notations and results of $G$-expectation theory. In
section 3, we study the polar sets and give some useful
quasi-continuous random variables. In section 4, we obtain the
characterization of $M_{G}^{p}(0,T)$ and get some useful quasi-continuous progressively measurable processes by Krylov's estimates.

\section{Preliminaries}

The main purpose of this section is to recall some basic notions and
results of $G$-expectation, which are needed in the sequel. The
readers may refer to \cite{P07a}, \cite{P08a}, \cite{P10}, \cite{PengICM2010} for more
details.

Let $\Omega=C_{0}^{d}(\mathbb{R}^{+})$ be the space of all $\mathbb{R}^{d}%
$-valued continuous paths $(\omega_{t})_{t\geq0}$, with
$\omega_{0}=0$,
equipped with the distance%
\[
\rho(\omega^{1},\omega^{2}):=\sum_{i=1}^{\infty}2^{-i}[(\max_{t\in
\lbrack 0,i]}|\omega_{t}^{1}-\omega_{t}^{2}|)\wedge1].
\]
For each $t\in \lbrack0,\infty)$, we denote
\begin{itemize}
\item $B_{t}(\omega):=\omega_{t}$ for each $\omega \in \Omega$;
\item $\mathcal{B}(\Omega)$: the Borel $\sigma$-algebra of $\Omega$,\ $\Omega_{t}:=\{ \omega_{\cdot \wedge t}:\omega \in \Omega \}$,\
$\mathcal{F}_{t}:=\mathcal{B}(\Omega_{t})$;
\item $L^{0}(\Omega)$: the space of all $\mathcal{B}(\Omega)$-measurable real
functions;

\item $L^{0}(\Omega_{t})$: the space of all $\mathcal{B}(\Omega_{t}%
)$-measurable real functions;

\item $B_{b}(\Omega)$: all bounded elements in $L^{0}(\Omega)$; $B_{b}%
(\Omega_{t}):=B_{b}(\Omega)\cap L^{0}(\Omega_{t})$;

\item $C_{b}(\Omega)$: all continuous elements in $B_{b}(\Omega)$;
$C_{b}(\Omega_{t}):=C_{b}(\Omega)\cap L^{0}(\Omega_{t})$;
\item $L_{ip}(\Omega):=\{ \varphi(B_{t_{1}},\ldots,B_{t_{k}}):k\in \mathbb{N}%
,t_{1},\ldots,t_{k}\in \lbrack0,\infty),\varphi \in C_{b.Lip}(\mathbb{R}%
^{k\times d })\}$, where $C_{b.Lip}(\mathbb{R}^{k\times d})$ denotes the space of
bounded and
Lipschitz functions on $\mathbb{R}^{k\times d}$; $L_{ip}(\Omega_{t}):=L_{ip}%
(\Omega)\cap L^{0}(\Omega_{t})$.
\end{itemize}

For each given monotonic and sublinear function $G:\mathbb{S}%
(d)\rightarrow \mathbb{R}$, let the canonical process
$B_{t}=(B_{t}^{i})_{i=1}^{d}$ be the $d$-dimensional $G$-Brownian
motion in the $G$-expectation space $(\Omega,L_{ip}(\Omega),\mathbb{\hat{E}%
}[\cdot],(\mathbb{\hat{E}}_{t}[\cdot])_{t\geq0})$, where $\mathbb{S}(d)$ denotes the space of all $d\times d$ symmetric matrices. For each
$p\geq1$, the completion of $L_{ip}(\Omega)$ under the norm
$||X||_{L_{G}^{p}}:=(\mathbb{\hat{E}}[|X|^{p}])^{1/p}$ is denoted by
$L_{G}^{p}(\Omega)$. Similarly, we can define
$L_{G}^{p}(\Omega_{T})$ for each fixed $T\geq0$. In this paper, we
always assume that $G$ is non-degenerate, i.e., there exist two constants
$0<\underline
{\sigma}^{2}\leq \bar{\sigma}^{2}<\infty$ such that
\[
\frac{1}{2}\underline{\sigma}^{2}\text{tr}[A-B]\leq G(A)-G(B)\leq \frac{1}%
{2}\bar{\sigma}^{2}\text{tr}[A-B], \text{ for }A\geq B.
\]
Then we  deduce that $|G(A)|\leq
\frac{1}{2}\bar{\sigma}^{2}\sqrt {d}\sqrt{\text{tr}[AA^{T}]}$ for
any $A\in \mathbb{S}(d)$.

Denis et. al. \cite{DHP11} proved that the completions of
$C_{b}(\Omega)$ and $L_{ip} (\Omega)$ under
$\Vert\cdot\Vert_{L_{G}^{p}}$ are the same.
\begin{theorem}[\cite{DHP11,HP09}]
\label{the2.7}  There exists a weakly compact  set of probability
measures $\mathcal{P}$ on $(\Omega,\mathcal{B}(\Omega))$, such that
\[
\mathbb{\hat{E}}[\xi]=\sup_{P\in\mathcal{P}}E_{P}[\xi],\  \text{for
 all}\ \xi\in  {L}_{G}^{1}{(\Omega)}.
\]
$\mathcal{P}$ is called a set that represents $\mathbb{\hat{E}}$.
\end{theorem}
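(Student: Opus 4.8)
The plan is to follow the functional-analytic route of Denis, Hu and Peng, supplemented by the explicit probabilistic construction of Hu and Peng. First I would regard $\mathbb{\hat{E}}$ as a sublinear functional on $C_{b}(\Omega)$, the bounded continuous functions on the Polish space $\Omega$ (recall that the $L_{G}^{p}$-completions of $C_{b}(\Omega)$ and $L_{ip}(\Omega)$ coincide, so $\mathbb{\hat{E}}$ is well defined and sublinear there, with $\mathbb{\hat{E}}[c]=c$ for constants). By the Hahn--Banach theorem one dominates $\mathbb{\hat{E}}$ on $C_{b}(\Omega)$ by a family of positive linear functionals, and the Riesz representation theorem for positive linear functionals on $C_{b}$ over a Polish space yields a family $\mathcal{Q}$ of finitely additive set functions of total mass one with $\mathbb{\hat{E}}[\xi]=\sup_{Q\in\mathcal{Q}}E_{Q}[\xi]$ for every $\xi\in C_{b}(\Omega)$.

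The crucial step is to upgrade $\mathcal{Q}$ to a family of genuine (countably additive) probability measures and to show that this family may be taken weakly compact. For countable additivity I would establish the downward continuity of $\mathbb{\hat{E}}$: if $(\xi_{n})\subset C_{b}(\Omega)$ decreases pointwise to $0$, then $\mathbb{\hat{E}}[\xi_{n}]\downarrow0$. This is where the argument genuinely uses the structure of $G$-Brownian motion: using the moment estimate $\mathbb{\hat{E}}[|B_{t}-B_{s}|^{2m}]\leq C_{m}|t-s|^{m}$ together with the Kolmogorov--Arzel\`{a}--Ascoli tightness criterion, one produces, for any $\varepsilon>0$, a compact $K\subset\Omega$ with $\mathbb{\hat{E}}[\mathbf{1}_{K^{c}}]\leq\varepsilon$, and then applies Dini's theorem on $K$. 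Countable additivity of each representing functional follows, and the same tightness estimate shows that the resulting family $\mathcal{P}$ of probability measures is uniformly tight, hence relatively weakly compact by Prokhorov's theorem; passing to the weak closure and noting that $P\mapsto E_{P}[\xi]$ is weakly continuous for $\xi\in C_{b}(\Omega)$, one checks the closed family still represents $\mathbb{\hat{E}}$ on $C_{b}(\Omega)$, and therefore on $L_{G}^{1}(\Omega)$ by density.

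Alternatively, and this is the route of Hu--Peng which also identifies $\mathcal{P}$ concretely, I would define $\mathcal{P}$ as the set of laws of the It\^{o} processes $\int_{0}^{\cdot}\theta_{s}\,dW_{s}$ on a reference Wiener space, where $\theta$ ranges over progressively measurable processes with $\theta_{s}\theta_{s}^{T}$ valued in the convex compact set $\Sigma=\{\gamma\in\mathbb{S}(d):\underline{\sigma}^{2}I\leq\gamma\leq\bar{\sigma}^{2}I\}$ associated with $G$ through $G(A)=\tfrac12\sup_{\gamma\in\Sigma}\mathrm{tr}[\gamma A]$. One then proves $\sup_{\theta}E[\varphi(B_{t_{1}},\dots,B_{t_{k}})]=\mathbb{\hat{E}}[\varphi(B_{t_{1}},\dots,B_{t_{k}})]$ for $\varphi\in C_{b.Lip}$ by showing both sides equal the value at the origin of the iterated fully nonlinear PDE $\partial_{t}u=G(D^{2}u)$: the left side via the dynamic programming principle and a verification/viscosity argument for the associated Hamilton--Jacobi--Bellman equation, the right side by Peng's very definition of $\mathbb{\hat{E}}$ via the $G$-heat equation, with uniqueness of bounded viscosity solutions bridging the two. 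Weak compactness of this $\mathcal{P}$ again follows from uniform Kolmogorov moment bounds and Prokhorov's theorem.

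The main obstacle, in either formulation, is the passage from finite to countable additivity, equivalently the tightness estimate ensuring that no mass escapes onto discontinuous paths; in the probabilistic formulation the analogous difficulty is the identification $\sup_{\theta}E[\cdot]=\mathbb{\hat{E}}[\cdot]$, which rests on the dynamic programming principle and the comparison/uniqueness theory for the nonlinear PDE. Once the regularity of $\mathbb{\hat{E}}$ along monotone sequences (or the PDE identification) is in hand, the remaining ingredients---Hahn--Banach, Riesz representation, Prokhorov compactness, and density of $C_{b}(\Omega)$ in $L_{G}^{1}(\Omega)$---are routine.
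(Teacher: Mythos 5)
The paper does not prove this statement itself; it quotes it from \cite{DHP11,HP09}, and your proposal faithfully reconstructs exactly those two arguments (the Hahn--Banach/Daniell--Stone route with tightness via Kolmogorov-type moment estimates and Prokhorov, and the Hu--Peng identification of $\mathcal{P}$ as the laws of It\^{o} integrals with volatility in $\Sigma$ via the $G$-heat equation and dynamic programming). So your approach is essentially the same as the one underlying the cited theorem, and the sketch is correct.
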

\begin{remark}\label{yq1}{\upshape Denis et. al. \cite{DHP11} constructed a concrete set $\mathcal{P}_M$ that represents $\hat{\mathbb{E}}$.
For  simplicity's sake, we consider  the $1$-dimensional case,  thus  $G(a)=\frac{1}{2}(\bar{\sigma}^2a^+-\underline{\sigma}^2a^-)$ for each $a\in\mathbb{R}$.
Suppose   $B$ is a  Brownian motion defined  on $(\Omega,L^0(\Omega),P)$, then
\[
\mathcal{P}_M := \{P_{\theta} : P_{\theta}= P\circ X^{-1},\ X_t = \int^t_0 \theta_sdB_s,\  \theta\in L^2_{\mathcal{F}}([0, T ]; [\underline{\sigma}^2, \bar{\sigma}^2])\}\]
 represents $\hat{\mathbb{E}}$, where $L^2_{\mathcal{F}}([0, T ]; [\underline{\sigma}^2, \bar{\sigma}^2])$ is the collection of all adapted
measurable processes with $\underline{\sigma}^2 \leq |\theta_s|^2 \leq\bar{\sigma}^2$.
}
\end{remark}

Let $\mathcal{P}$ be a weakly compact set that represents
$\mathbb{\hat{E}}$.
For this $\mathcal{P}$, we define capacity%
\[
c(A):=\sup_{P\in\mathcal{P}}P(A),\ A\in\mathcal{B}(\Omega).
\]
An important property of this capacity is that $c(F_{n})\downarrow c(F)$ for
any closed sets $F_{n}\downarrow F$.

A set $A\subset\mathcal{B}(\Omega)$ is polar if $c(A)=0$.  A
property holds ``$quasi$-$surely$'' (q.s.) if it holds outside a
polar set. In the following, we do not distinguish  between random
variables $X$ and $Y$ if $X=Y$ q.s..

\begin{definition}A real function $X$ on $\Omega$ is said to be quasi-continuous if for each $\varepsilon>0$,
there exists an open set $O$ with $c(O)<\varepsilon$ such that
$X|_{O^{c}}$ is continuous.
\end{definition}

\begin{definition}  We say that  $X:\Omega\mapsto\mathbb{R}$ has a quasi-continuous
version if there exists a quasi-continuous function
$Y:\Omega\mapsto\mathbb{R}$ such that $X = Y$, q.s..
\end{definition}

\begin{theorem}[\cite{DHP11,HP09}]
\label{new-qua}We have
\begin{align*}
L_{G}^{p}(\Omega)=\{X\in L^0(\Omega)\ :\ \
&\lim\limits_{N\rightarrow\infty}\mathbb{\hat{E}}[|X|^pI_{|X|\geq
N}]=0 \
 \text{and}\\ &  X\ \text {has a quasi-continuous version}\}.
 \end{align*}
\end{theorem}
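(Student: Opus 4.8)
The plan is to prove the two inclusions separately. The tools I would use are all recalled above: the sublinear expectation $\mathbb{\hat{E}}$, viewed on all of $L^{0}(\Omega)$ as the upper expectation $\mathbb{\hat{E}}[\cdot]:=\sup_{P\in\mathcal P}E_{P}[\cdot]$ (so that $\mathbb{\hat{E}}[|X|^{p}]\in[0,\infty]$ makes sense for every $X$); the fact (Denis et al.) that the $\Vert\cdot\Vert_{L_{G}^{p}}$-closure of $C_{b}(\Omega)$ equals $L_{G}^{p}(\Omega)$; and the fact that $c(A)=\sup_{P\in\mathcal P}P(A)$ is monotone and countably subadditive with $c(F_{n})\downarrow c(F)$ along decreasing closed sets. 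I would also freely use the Markov-type inequality $c(|Z|>\lambda)\le\lambda^{-p}\mathbb{\hat{E}}[|Z|^{p}]$ and Tietze's extension theorem on the metric space $(\Omega,\rho)$. Note $L_{ip}(\Omega)\subseteq C_{b}(\Omega)$, so every element of $L_{G}^{p}(\Omega)$ is an $L_{G}^{p}$-limit of genuinely continuous functions.

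\textbf{The inclusion $L_{G}^{p}(\Omega)\subseteq\{\,\cdot\,\}$.} Fix $X\in L_{G}^{p}(\Omega)$. For the integrability condition, given $\varepsilon>0$ choose $Y\in C_{b}(\Omega)$, $|Y|\le M$, with $\mathbb{\hat{E}}[|X-Y|^{p}]<\varepsilon$; on $\{|X|\ge N\}\subseteq\{|X-Y|\ge N-M\}$ one has $|X|\le|X-Y|+M$, so the elementary bound $(a+M)^{p}\le 2^{p-1}(a^{p}+M^{p})$ together with subadditivity of $\mathbb{\hat{E}}$ and Markov's inequality gives, for $N>M$,
\[
\mathbb{\hat{E}}[|X|^{p}I_{\{|X|\ge N\}}]\le 2^{p-1}\mathbb{\hat{E}}[|X-Y|^{p}]+2^{p-1}M^{p}c(|X-Y|\ge N-M)\le 2^{p-1}\varepsilon+\tfrac{2^{p-1}M^{p}\varepsilon}{(N-M)^{p}},
\]
whence $\limsup_{N}\mathbb{\hat{E}}[|X|^{p}I_{\{|X|\ge N\}}]\le 2^{p-1}\varepsilon$, and $\varepsilon\downarrow0$ finishes this part. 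For the quasi-continuous version, write $X=\lim_{n}X_{n}$ in $\Vert\cdot\Vert_{L_{G}^{p}}$ with $X_{n}\in C_{b}(\Omega)$, and after passing to a subsequence assume $\mathbb{\hat{E}}[|X_{n+1}-X_{n}|^{p}]\le 2^{-2pn}$, so that the open sets $G_{n}:=\{|X_{n+1}-X_{n}|>2^{-n}\}$ satisfy $c(G_{n})\le 2^{-pn}$. Put $\mathcal O_{N}:=\bigcup_{n\ge N}G_{n}$: it is open, decreasing in $N$, and $c(\mathcal O_{N})\le\sum_{n\ge N}2^{-pn}\to0$; on $\mathcal O_{N}^{c}$ one has $|X_{n+1}-X_{n}|\le 2^{-n}$ for all $n\ge N$, so $(X_{n})_{n\ge N}$ is uniformly Cauchy there and converges uniformly to a function continuous on $\mathcal O_{N}^{c}$. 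Defining $Y:=\lim_{n}X_{n}$ on $\bigcup_{N}\mathcal O_{N}^{c}$ and $Y:=0$ on the polar set $\bigcap_{N}\mathcal O_{N}$ yields a quasi-continuous function; finally, $X_{n}\to X$ in $L_{G}^{p}$ forces $X_{n_{k}}\to X$ q.s. along some subsequence (Markov plus countable subadditivity of $c$), while $X_{n_{k}}\to Y$ q.s., so $X=Y$ q.s.

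\textbf{The inclusion $\{\,\cdot\,\}\subseteq L_{G}^{p}(\Omega)$.} Let $X\in L^{0}(\Omega)$ be quasi-continuous with $\mathbb{\hat{E}}[|X|^{p}I_{\{|X|\ge N\}}]\to0$. Truncating, $X^{N}:=(X\wedge N)\vee(-N)$ is bounded, still quasi-continuous (it is the image of $X$ under a $1$-Lipschitz map, so $X^{N}|_{O^{c}}$ is continuous whenever $X|_{O^{c}}$ is), and $|X-X^{N}|\le|X|I_{\{|X|\ge N\}}$, hence $\mathbb{\hat{E}}[|X-X^{N}|^{p}]\to0$; thus it suffices to show $X^{N}\in L_{G}^{p}(\Omega)$, i.e. one may assume $X$ bounded and quasi-continuous, say $|X|\le M$. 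Then for $\varepsilon>0$ take an open $O$ with $c(O)<\varepsilon$ and $X|_{O^{c}}$ continuous, and extend $X|_{O^{c}}$ by Tietze to $Y\in C_{b}(\Omega)$ with $\Vert Y\Vert_{\infty}\le M$; since $Y=X$ on $O^{c}$ we get $\{X\ne Y\}\subseteq O$, so
\[
\mathbb{\hat{E}}[|X-Y|^{p}]=\mathbb{\hat{E}}[|X-Y|^{p}I_{\{X\ne Y\}}]\le(2M)^{p}c(O)<(2M)^{p}\varepsilon .
\]
As $Y\in C_{b}(\Omega)\subseteq L_{G}^{p}(\Omega)$ and $\varepsilon$ is arbitrary, $X$ lies in the $\Vert\cdot\Vert_{L_{G}^{p}}$-closure of $C_{b}(\Omega)$, that is, $X\in L_{G}^{p}(\Omega)$.

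\textbf{Main obstacle.} The most technical point is producing the quasi-continuous version in the first inclusion: one must pass to a fast subsequence, check that the exceptional open sets $\bigcup_{n\ge N}\{|X_{n+1}-X_{n}|>2^{-n}\}$ have capacity tending to $0$, verify that the pointwise limit is continuous on each complement and consistent across $N$, and then identify this limit with $X$ quasi-surely, which relies on the standard (but not entirely trivial) fact that $L_{G}^{p}$-convergence implies quasi-sure convergence along a subsequence. Everything else — the integrability splitting, the truncation, and the Tietze step — is routine once $C_{b}(\Omega)$ is known to be $\Vert\cdot\Vert_{L_{G}^{p}}$-dense in $L_{G}^{p}(\Omega)$.
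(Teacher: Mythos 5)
Your proof is correct. Note that the paper itself does not prove this theorem — it is quoted from \cite{DHP11,HP09} — and your argument (density of $C_{b}(\Omega)$ in $L_{G}^{p}(\Omega)$ together with the representation $\mathbb{\hat{E}}=\sup_{P\in\mathcal P}E_{P}$, a Markov/Borel--Cantelli fast-subsequence construction of the quasi-continuous version, and truncation plus Tietze extension for the converse inclusion) is essentially the standard proof given in those references.
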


\begin{theorem}[\cite{DHP11,HP09}]
\label{new-qua1} Let $(X_{k})_{k\geq 1}\subset L_{G}^{1}(\Omega)$, be such
that $X_{k}\downarrow X$ q.s.. Then
$\mathbb{\hat{E}}[X_{k}]\downarrow \mathbb{\hat{E}}[X]$. In
particular, if $X\in L_{G}^{1}(\Omega)$, then $\mathbb{\hat{E}}[|X_{k}%
-X|]\downarrow0$.
\end{theorem}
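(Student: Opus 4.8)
The plan is to reduce the statement to the case of bounded continuous functions, where the weak compactness of $\mathcal{P}$ provided by Theorem \ref{the2.7} turns it into a Dini-type argument. For $X\in L^{0}(\Omega)$ I read $\mathbb{\hat{E}}[X]$ as $\sup_{P\in\mathcal{P}}E_{P}[X]\in[-\infty,\infty)$, which is consistent with the representation on $L_{G}^{1}(\Omega)$. Since each $X_{k}\in L_{G}^{1}(\Omega)$, we have $\mathbb{\hat{E}}[|X_{k}|]<\infty$, so $E_{P}[X_{k}]\in\mathbb{R}$ for all $P\in\mathcal{P}$, and monotonicity gives $\mathbb{\hat{E}}[X_{k}]\geq\mathbb{\hat{E}}[X_{k+1}]\geq\mathbb{\hat{E}}[X]$; hence $\mathbb{\hat{E}}[X_{k}]$ decreases to some $L\geq\mathbb{\hat{E}}[X]$, and the content of the theorem is the reverse inequality $L\leq\mathbb{\hat{E}}[X]$.

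First I would establish the continuous case: if $\xi_{n}\in C_{b}(\Omega)$ and $\xi_{n}\downarrow\xi$ pointwise on $\Omega$ (with $\xi=\inf_{n}\xi_{n}$ merely measurable, possibly $-\infty$-valued), then $\mathbb{\hat{E}}[\xi_{n}]\downarrow\mathbb{\hat{E}}[\xi]$. For a fixed $P\in\mathcal{P}$, monotone convergence applied to $\xi_{1}-\xi_{n}\uparrow\xi_{1}-\xi\geq0$ (using $E_{P}[\xi_{1}]<\infty$) gives $E_{P}[\xi_{n}]\downarrow E_{P}[\xi]$. Thus $f_{n}(P):=E_{P}[\xi_{n}]$ is a decreasing sequence of weakly continuous functions on $\mathcal{P}$ (continuity since $\xi_{n}\in C_{b}(\Omega)$), converging pointwise to $f(P):=E_{P}[\xi]$. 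Because $\Omega=C_{0}^{d}(\mathbb{R}^{+})$ is Polish, $\mathcal{P}$ is compact and metrizable; picking $P_{n}\in\mathcal{P}$ with $f_{n}(P_{n})=\sup_{\mathcal{P}}f_{n}=\mathbb{\hat{E}}[\xi_{n}]$ (attained, by continuity on the compact $\mathcal{P}$) and a convergent subsequence $P_{n_{j}}\to P_{\ast}$, we get, for each fixed $m$ and all large $j$, $f_{m}(P_{n_{j}})\geq f_{n_{j}}(P_{n_{j}})\geq L$; letting $j\to\infty$ (continuity of $f_{m}$) and then $m\to\infty$ (monotone convergence at $P_{\ast}$) gives $\mathbb{\hat{E}}[\xi]\geq f(P_{\ast})\geq L$, which is the claimed reverse inequality.

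Then I would reduce the general case to the continuous one. Fix $\varepsilon>0$. Since $C_{b}(\Omega)$ is $\Vert\cdot\Vert_{L_{G}^{1}}$-dense in $L_{G}^{1}(\Omega)$, choose $\eta_{k}\in C_{b}(\Omega)$ with $\mathbb{\hat{E}}[|X_{k}-\eta_{k}|]\leq\varepsilon 2^{-k}$ and set $\xi_{n}:=\min(\eta_{1},\dots,\eta_{n})\in C_{b}(\Omega)$, so $\xi_{n}\downarrow\xi:=\inf_{n}\eta_{n}$ pointwise. Since $X_{k}\downarrow X$ q.s., we have $X_{n}=\min(X_{1},\dots,X_{n})$ q.s., so the elementary bound $|\min_{j\leq n}a_{j}-\min_{j\leq n}b_{j}|\leq\sum_{j\leq n}|a_{j}-b_{j}|$ yields $|X_{n}-\xi_{n}|\leq\sum_{j=1}^{n}|X_{j}-\eta_{j}|\leq\zeta$ q.s., where $\zeta:=\sum_{j\geq1}|X_{j}-\eta_{j}|$; by countable subadditivity of $\mathbb{\hat{E}}$ (immediate from Theorem \ref{the2.7}), $\mathbb{\hat{E}}[\zeta]\leq\varepsilon$, and passing to the q.s. limit in $|X_{n}-\xi_{n}|\leq\zeta$ gives $|X-\xi|\leq\zeta$ q.s. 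Hence $|\mathbb{\hat{E}}[X_{n}]-\mathbb{\hat{E}}[\xi_{n}]|\leq\varepsilon$ for all $n$ and $|\mathbb{\hat{E}}[X]-\mathbb{\hat{E}}[\xi]|\leq\varepsilon$; combining these with the continuous case $\mathbb{\hat{E}}[\xi_{n}]\downarrow\mathbb{\hat{E}}[\xi]$ and letting $\varepsilon\downarrow0$ shows $\mathbb{\hat{E}}[X_{n}]\downarrow\mathbb{\hat{E}}[X]$. For the last assertion, if $X\in L_{G}^{1}(\Omega)$ then $Y_{k}:=X_{k}-X\in L_{G}^{1}(\Omega)$ satisfies $Y_{k}\downarrow0$ q.s. and $|X_{k}-X|=Y_{k}$, so applying the result just proved to $(Y_{k})$ gives $\mathbb{\hat{E}}[|X_{k}-X|]\downarrow\mathbb{\hat{E}}[0]=0$.

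The main obstacle is the continuous case: upgrading the pointwise convergence $E_{P}[\xi_{n}]\downarrow E_{P}[\xi]$ to convergence of the suprema over $\mathcal{P}$. This is essentially a generalized Dini theorem, and it is exactly where the weak compactness (and metrizability) of $\mathcal{P}$ is indispensable; the remaining ingredients — density of $C_{b}(\Omega)$, the $\min$-estimate, countable subadditivity, and the q.s. identifications — are routine.
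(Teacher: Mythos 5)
Your proof is correct. Note first that the paper itself does not prove this statement: it is quoted from Denis--Hu--Peng and Hu--Peng, so the only comparison available is with the standard argument there. That argument works on the path space side: it uses quasi-continuity of the $X_k$ (one open set $O$ of small capacity outside which all $X_k$ are continuous), tightness of $\mathcal{P}$ coming from weak compactness (a compact $K\subset\Omega$ with $c(K^{c})$ small), and then classical Dini on the compact set $K\setminus O$ to get near-uniform convergence, plus a uniform-integrability estimate to control the exceptional set. You instead work on the measure side: after approximating each $X_k$ in $L_{G}^{1}$ by $\eta_k\in C_{b}(\Omega)$ with error $\varepsilon 2^{-k}$ and passing to $\xi_n=\min(\eta_1,\dots,\eta_n)$, you run a generalized Dini argument directly on the compact metrizable set $\mathcal{P}$, extracting a limit of maximizing measures $P_{n_j}\to P_{*}$ and using monotone convergence at $P_{*}$. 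Both routes hinge on the weak compactness of $\mathcal{P}$, but yours uses it as compactness of the set of measures rather than via tightness in $\Omega$, and it trades the quasi-continuity/tightness bookkeeping for the $L^1_G$-density of $C_{b}(\Omega)$, the $\min$-estimate and countable subadditivity; this is arguably cleaner, at the (mild) cost of having to read $\mathbb{\hat{E}}[X]$ and $\mathbb{\hat{E}}[\xi]$ as $\sup_{P\in\mathcal{P}}E_{P}[\cdot]$ for limits that need not lie in $L_{G}^{1}(\Omega)$ — which is indeed the intended meaning in the paper — and of a couple of routine edge cases (values $-\infty$ for $\mathbb{\hat{E}}[X]$, $\mathbb{\hat{E}}[\xi]$, and $\zeta<\infty$ q.s.) that you handle, or could handle, in one line. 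The final reduction of the ``in particular'' claim to $Y_k=X_k-X\downarrow 0$ is the same in both treatments.
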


\begin{definition}[\cite{HJY}]
\label{esssup}Assume $X_{\theta}\in L^{1}_G(\Omega_t)$ for each $\theta\in\Theta$. Then the essential supremum of $\{X_{\theta}\mid
\theta\in\Theta\}$, denoted by $\underset{\theta\in\Theta}{\text{ess}\sup} X_{\theta}$, is a random variable $\zeta\in L_{G}^{1}(\Omega_{t})$ satisfying:
\begin{description}
\item[(i)] $\forall  \theta\in\Theta,$ $\zeta\geq X_{\theta}$ $\ $q.s.;

\item[(ii)] if $\xi$ is a random variable satisfying $\xi\geq X_{\theta}$
$\ $q.s. for any $\theta\in\Theta$, then $\zeta\leq\xi$ $\ $q.s..\end{description}
\end{definition}

\begin{definition}
\label{def2.6} Let $M_{G}^{0}(0,T)$ be the collection of processes
of  the following form: for a given partition
$\{t_{0},\cdot\cdot\cdot,t_{N}\}=\pi _{T}$ of $[0,T]$,
\[
\eta_{t}(\omega)=\sum_{i=0}^{N-1}\xi_{i}(\omega)I_{[t_{i},t_{i+1})}(t),
\]
where $\xi_{i}\in L_{ip}(\Omega_{t_{i}})$,
$i=0,1,2,\cdot\cdot\cdot,N-1$. For each $p\geq1$,  denote by
$M_{G}^{p}(0,T)$ the completion of $M_{G}^{0}(0,T)$ under the norm
$||\eta||_{M_{G}^{p}}:=(\mathbb{\hat{E}}[\int_{0}^{T}|\eta_{t}|^{p}dt])^{1/p}$.
\end{definition}

For each $ \eta\in M_{G}^{2}(0,T)$, the $G$-It\^{o} integral
$\{\int^t_0\eta_sdB^i_s\}_{t\in[0,T]}$ is well defined, see Peng
\cite{P10} and Li-Peng \cite{L-P}.

\section{Quasi-continuous random variables}

In this section, we shall   prove some ``irregular''
Borel measurable functions on $\Omega$ are quasi-continuous  by
virtue of a PDE approach. We consider the following
$G$-It\^{o} processes (in this paper we always use Einstein's summation
convention): for each given $x=(x_{1},\ldots,x_{n})^{\top}\in
\mathbb{R}^{n}$ and $1\leq i\leq n$,
\[
X_{t}^{x_{i};i}=x_{i}+\int_{0}^{t}\alpha_{i}(s)ds+\int_{0}^{t}\beta_{i}%
^{jk}(s)d\langle B^{j},B^{k}\rangle_{s}+\int_{0}^{t}\sigma_i(s)dB_{s},
\]
where $\beta^{jk}(t)=\beta^{kj}(t)$ and $\sigma_i$ is the $i$-th row of $\sigma$. Denote by  $X_{t}^{x}=(X_{t}%
^{x_{1};1},\ldots,X_{t}^{x_{n};n})^{\top}$, $\alpha(t)=(\alpha_{1}(t),\ldots,\alpha_{n}(t))^{\top}$ and
$\beta^{jk}(t)=(\beta_{1}^{jk}(t),\ldots,\beta_{n}^{jk}(t))^{\top}$. Then the above $G$-It\^{o} processes can
be written as%
\begin{equation}
X_{t}^{x}=x+\int_{0}^{t}\alpha_sds+\int_{0}^{t}\beta^{jk}_sd\langle B^{j},B^{k}\rangle_{s}+\int_{0}^{t}\sigma_sdB_{s}.
\label{SDE1}%
\end{equation}

In this paper, we shall use the following assumptions:
\begin{description}
\item[(H1)] For each $s>0$, $(\alpha_t)_{0\leq t\leq s}$ and $(\beta_t^{jk})_{0\leq t\leq s}$ are in $M^2_G(0,s;\mathbb{R}^n)$, $(\sigma_t)_{0\leq t\leq s}$ are in $M^2_G(0,s;\mathbb{R}^{n\times d})$;
\item[(H2)] There exists a constant $L>0$ such that for each $t\in
\lbrack0,\infty)$,%
\[
|\alpha_{i}(t)|\leq L\text{, }|\beta_{i}^{jk}(t)|\leq L,|\sigma_{i}(t)|\leq L, \text{ for }j,k\leq d\ \text{and}\ i\leq
n;
\]
\item[(H3)] There exist  two constants $0<\lambda< \Lambda<\infty$ such that for
each $t\in \lbrack0,\infty)$, %
\begin{align*}
&\lambda I_{n\times n}  \leq \sigma_t(\sigma_t)^{\top} \leq \Lambda
I_{n\times n}, \text{ if }n\leq d, \\
&\lambda I_{d\times d}\leq (\sigma_t)^{\top}\sigma_t \leq \Lambda
I_{d\times d}, \text{ if }n>d;
\end{align*}

\item[(H4)] There exist two constants $0<\gamma<\Gamma<\infty$ such that for
each $(t,x)\in \lbrack0,\infty)\times \mathbb{R}^{n}$,%
\[
\gamma \leq|\sigma_{i}(t)|^{2}=\sigma_{i}(t)(\sigma_{i}(t))^{\top}\leq
\Gamma, \text{ for } \ i\leq n.
\]

\end{description}

\begin{remark}{\upshape
If $n\leq d$, then (H3) is stronger than (H4).}
\end{remark}

In order to state the main results of this section, we shall use the stochastic representation for the  HJB equation.
For this purpose, we denote the following sets:  $$
\mathcal{V}=\{v=(\alpha,\beta,\sigma)| \ \text{$\alpha, \beta $ and $\sigma$ satisfy assumptions (H1), (H2) and (H3)}  \}
$$ and
  $$
\mathcal{V}_0=\{v\in \mathcal{V}|  \  \text{ $v$ is a constant process, i.e., $v(t)=v(0)$ for each $t> 0$ }\}.
$$
Now for each fixed $t\geq0$, $v\in\mathcal{V}$ and for each given $\xi \in L_{G}^{2}(\Omega
_{t};\mathbb{R}^{n})$, consider the following $G$-It\^{o} process:%
\begin{equation}
X_{s}^{t,\xi,v}=\xi+\int_{t}^{s}\alpha_rdr+\int_{t}^{s}\beta^{jk}_rd\langle B^{j},B^{k}\rangle_{r}+\int_{t}^{s}\sigma_rdB_{r}. \label{SDE2}%
\end{equation}
 Then for each fixed  $T>0$ and $\Phi \in C_{b.Lip}(\mathbb{R}^{n})$, we define%
\[
Y_{t}^{t,\xi}=\underset{v\in\mathcal{V}}{ess\sup}\mathbb{\hat{E}}_{t}[\Phi(X_{T}^{t,\xi,v})], \ \ t\in[0,T].
\]
Next, for each $x\in \mathbb{R}^{n}$, we set%
\[
u(t,x):=Y_{t}^{t,x}.
\]
It is important to note that
$u(0,x)=\underset{v\in\mathcal{V}}{\sup}\mathbb{\hat{E}}[\Phi(X_{T}^{0,x,v})]$.

\begin{theorem}[\cite{HJY}]
\label{th0}For each fixed  $T>0$, we have

\begin{description}
\item[(1)] $u(t,x)$ is a deterministic continuous function of $(t,x)$;

\item[(2)] For each $\xi \in L_{G}^{2}(\Omega_{t};\mathbb{R}^{n})$,
$Y_{t}^{t,\xi}=u(t,\xi)$;

\item[(3)] $u$ is the unique viscosity solution of the following PDE:%
\begin{equation}
\left \{
\begin{array}
[c]{l}%
\partial_{t}u+\sup\limits_{v\in\mathcal{V}_0}\{G(\sigma^{\top}D_{x}^{2}u\sigma+2\langle \beta^{jk},D_{x}u\rangle)+\langle \alpha,D_{x}u\rangle\}=0,\ (t,x)\in[0,T)\times\mathbb{R}^n,\\
u(T,x)=\Phi(x).
\end{array}
\right.  \label{PDE}%
\end{equation}
\end{description}
\end{theorem}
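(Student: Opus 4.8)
The plan is to prove the three assertions in turn, leaning on the well-posedness of \eqref{SDE2}, the standard moment estimates for $G$-SDEs, and the tower and independence properties of the conditional $G$-expectation $\mathbb{\hat{E}}_{\cdot}$. For assertion \textbf{(1)}, the first point is that with deterministic coefficients and a deterministic initial datum $x$ at a deterministic time $t$, the solution $X_{T}^{t,x}$ of \eqref{SDE2} is a measurable functional of the increments $(B_{r}-B_{t})_{t\leq r\leq T}$; hence $\Phi(X_{T}^{t,x})$ is independent of $\mathcal{F}_{t}$ in Peng's sense, and the defining property of $G$-Brownian motion gives $Y_{t}^{t,x}=\mathbb{\hat{E}}_{t}[\Phi(X_{T}^{t,x})]=\mathbb{\hat{E}}[\Phi(X_{T}^{t,x})]$, a constant, so $u(t,x)$ is deterministic. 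For the continuity, I would derive from \emph{(H1)}, \emph{(H2)}, Gronwall's inequality and the Burkholder--Davis--Gundy-type estimates for $G$-It\^{o} integrals a bound of the form $\mathbb{\hat{E}}[|X_{T}^{t,x}-X_{T}^{t',x'}|^{2}]\leq C(|x-x'|^{2}+|t-t'|)$ with $C=C(T,C,C',\bar{\sigma})$, and then use that $\Phi$ is bounded and Lipschitz together with the subadditivity of $\mathbb{\hat{E}}$ to conclude $|u(t,x)-u(t',x')|\leq C_{\Phi}\,\mathbb{\hat{E}}[|X_{T}^{t,x}-X_{T}^{t',x'}|]\leq C'(|x-x'|+|t-t'|^{1/2})$.

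For assertion \textbf{(2)}, I would first handle simple initial data $\xi=\sum_{i=1}^{m}x_{i}I_{A_{i}}$ with $\{A_{i}\}_{i=1}^{m}\subset\mathcal{F}_{t}$ a finite partition of $\Omega$. By uniqueness of \eqref{SDE2} one verifies $X_{s}^{t,\xi}=\sum_{i}I_{A_{i}}X_{s}^{t,x_{i}}$, and then, using that $\mathcal{F}_{t}$-measurable indicators can be pulled out of $\mathbb{\hat{E}}_{t}$ and that each $\Phi(X_{T}^{t,x_{i}})$ is independent of $\mathcal{F}_{t}$,
\[
Y_{t}^{t,\xi}=\mathbb{\hat{E}}_{t}\Big[\sum_{i}I_{A_{i}}\Phi(X_{T}^{t,x_{i}})\Big]=\sum_{i}I_{A_{i}}\mathbb{\hat{E}}_{t}[\Phi(X_{T}^{t,x_{i}})]=\sum_{i}I_{A_{i}}u(t,x_{i})=u(t,\xi).
\]
For general $\xi\in L_{G}^{2}(\Omega_{t};\mathbb{R}^{n})$ I would pick simple $\xi_{m}\to\xi$ in $L_{G}^{2}$, use the Lipschitz-in-initial-data estimate $\mathbb{\hat{E}}[|X_{T}^{t,\xi}-X_{T}^{t,\xi_{m}}|^{2}]\leq C\,\mathbb{\hat{E}}[|\xi-\xi_{m}|^{2}]$ (same SDE estimates as above) to get $Y_{t}^{t,\xi_{m}}\to Y_{t}^{t,\xi}$, and the boundedness and continuity of $u$ from \textbf{(1)} to get $u(t,\xi_{m})\to u(t,\xi)$, both limits in $L_{G}^{1}$; this yields the identity.

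For assertion \textbf{(3)}, the crucial intermediate step is the dynamic programming principle $u(t,x)=\mathbb{\hat{E}}[u(t+\delta,X_{t+\delta}^{t,x})]$ for $0<\delta\leq T-t$. This follows from the flow property $X_{T}^{t,x}=X_{T}^{t+\delta,\,X_{t+\delta}^{t,x}}$ (by uniqueness), the tower property of $\mathbb{\hat{E}}_{\cdot}$, assertion \textbf{(2)} applied with $\xi=X_{t+\delta}^{t,x}\in L_{G}^{2}(\Omega_{t+\delta};\mathbb{R}^{n})$, and the determinism from \textbf{(1)} which removes the conditioning. To check that $u$ is a viscosity subsolution at $(t,x)$, I take a smooth bounded test function $\varphi\geq u$ with $\varphi(t,x)=u(t,x)$, apply the DPP to obtain $0\geq\mathbb{\hat{E}}[\varphi(t+\delta,X_{t+\delta}^{t,x})-\varphi(t,x)]$, expand by the $G$-It\^{o} formula, use the moment estimates $\mathbb{\hat{E}}[|X_{t+\delta}^{t,x}-x|^{2k}]\leq C_{k}\delta^{k}$ to discard higher-order remainders, divide by $\delta$ and let $\delta\downarrow0$, using \emph{(H2)} for continuity of the coefficients in $t$; the sublinearity of $G$ (acting through $\mathbb{\hat{E}}$) converts the expectation of the It\^{o} remainder into $G(\sigma^{T}D_{x}^{2}\varphi\,\sigma+H(D_{x}\varphi,x,t))$, giving $\partial_{t}\varphi+G(\sigma^{T}D_{x}^{2}\varphi\,\sigma+H(D_{x}\varphi,x,t))+\langle b,D_{x}\varphi\rangle\geq0$ at $(t,x)$. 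The supersolution inequality is symmetric, and uniqueness follows from the comparison principle for this proper fully nonlinear parabolic equation with $G$ Lipschitz, for which I would cite \cite{HJPS1,P10}.

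I expect the main obstacle to be the viscosity-solution step: organizing the $G$-It\^{o} expansion so that the quadratic-variation contributions and the $d\langle B^{j},B^{k}\rangle$ drift terms combine to produce exactly $G(\sigma^{T}D_{x}^{2}\varphi\,\sigma+H)$, while uniformly controlling all error terms as $o(\delta)$. This needs the sharp moment bounds on $X_{t+\delta}^{t,x}-x$ and a careful use of the subadditivity and positive homogeneity of $\mathbb{\hat{E}}$ to pass from the expectation of a sum of quadratic forms in the increments to the nonlinear generator $G$; the remaining pieces (determinism, continuity estimates, the simple-$\xi$ decomposition, and the comparison principle) are essentially routine or citable.
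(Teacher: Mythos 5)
The paper does not prove this theorem at all; it is quoted from \cite{HJPS1,P10}, and your outline follows the same route as those references (determinism via independence of the increments, Lipschitz/Gronwall estimates for continuity, reduction to simple initial data for (2), and the dynamic programming principle plus a $G$-It\^{o} expansion and comparison for (3)). In that sense the overall architecture is right and citable.

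There is, however, one genuine gap, and it sits exactly at the theme of this paper: in your step (2) you approximate $\xi\in L_{G}^{2}(\Omega_{t};\mathbb{R}^{n})$ by $\xi_{m}=\sum_{i}x_{i}I_{A_{i}}$ with $A_{i}\in\mathcal{F}_{t}$ an arbitrary measurable partition. Under the $G$-expectation framework indicators of Borel sets are in general \emph{not} quasi-continuous, hence by Theorem \ref{new-qua} such $\xi_{m}$ need not belong to $L_{G}^{2}(\Omega_{t};\mathbb{R}^{n})$ at all; then the $G$-SDE with initial datum $\xi_{m}$ is not covered by the existence theorem, $\sum_{i}I_{A_{i}}\Phi(X_{T}^{t,x_{i}})$ need not lie in the domain of $\mathbb{\hat{E}}_{t}$, and the identity pulling $I_{A_{i}}$ out of $\mathbb{\hat{E}}_{t}$ is not licensed. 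Worse, density of indicator-simple functions in $L_{G}^{2}(\Omega_{t})$ fails in general: e.g.\ for $\xi=\langle B\rangle_{t}$ every level in $[\underline{\sigma}^{2}t,\bar{\sigma}^{2}t]$ has capacity one, so level-set partitions produce non-quasi-continuous indicators (compare the example at the end of Subsection 4.1). The standard repair, which is what the cited proofs actually do, is to approximate with a Lipschitz partition of unity: take $\varphi_{i}\in C_{b.Lip}(\mathbb{R}^{n})$, $\varphi_{i}\geq0$, $\sum_{i}\varphi_{i}=1$, supported near $x_{i}$, and set $\xi_{m}=\sum_{i}x_{i}\varphi_{i}(\xi)$; then $\varphi_{i}(\xi)\in L_{G}^{2}(\Omega_{t})$, the product property $\mathbb{\hat{E}}_{t}[\eta X]=\eta^{+}\mathbb{\hat{E}}_{t}[X]+\eta^{-}\mathbb{\hat{E}}_{t}[-X]$ for $\mathcal{F}_{t}$-measurable $\eta$ applies, and one concludes with the Lipschitz-in-initial-data estimate together with the continuity and boundedness of $u$ from (1). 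With that substitution your argument for (2), and hence the DPP and the viscosity step built on it, goes through as in \cite{P10}; the remaining minor point is that your continuity estimate in (1) should carry a factor $(1+|x|^{2})$ coming from the linear growth of the coefficients, which does not affect the conclusion.
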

\begin{remark}\label{myq14}
{\upshape In the definition of $\mathcal{V}$, we can also assume  $\alpha, \beta $ and $\sigma$ satisfy assumptions (H1), (H2) and (H4). In this case,
the set $\mathcal{V}_0$, value function $u$ and equation \eqref{PDE} have to be redefined accordingly.
}
\end{remark}
\subsection{Polar sets associated to $G$-It\^{o} processes }
In the following, we first prove that $c(\{X_{t}^{x}=a\})=0$ for any
$t>0$ and $a\in \mathbb{R}^{n}$, i.e.  the $G$-It\^{o} process $X^x_t$ does not weight single point.
The proof is based on an estimate of the solution to PDE \eqref{PDE}.
\begin{lemma}
\label{le1}Let $T>0$, $\rho=(n\wedge
d)\lambda \underline{\sigma}^{2}(8d\bar{\sigma}^{2}\Lambda)^{-1}$,
$\theta=(2d\bar{\sigma}^{2}\Lambda)^{-1}$, $\varepsilon=(8\kappa)^{-1}\wedge T$,
$m\geq8\kappa$ and $u_{m}$ be the solution of PDE \eqref{PDE} with the terminal
condition $u_{m}(T,x)=\exp(-\frac{m\theta|x-a|^{2}}{2})$, where $a=(a_{1}%
,\ldots.a_{n})^{\top}\in \mathbb{R}^{n}$, $n\wedge d=\min \{n,d\}$,%
\[
\kappa=L^{2}(\bar{\sigma}^{2}%
d\sqrt{d}+1)^{2}((n\wedge d)\lambda \underline{\sigma}^{2})^{-1}.
\]%
Then for any $(t,x)\in \lbrack T-\varepsilon,T)\times \mathbb{R}^{n}$, we have%
\begin{equation}
0\leq u_{m}(t,x)\leq(1+m(T-t))^{-\rho}.\label{So}%
\end{equation}

\end{lemma}

\begin{proof}
It is easy to check that $\bar{u}_{m}(t,x)=0$ is a viscosity subsolution of PDE
(\ref{PDE}). Thus by comparison theorem we get $u_{m}(t,x)\geq0$ for each
$(t,x)\in \lbrack0,T]\times \mathbb{R}^{n}$. Set
\begin{equation}
\tilde{u}_{m}(t,x)=(1+m(T-t))^{-\rho}\exp(-\frac{m\theta|x-a|^{2}%
}{2(1+m(T-t))}).\label{Sup}%
\end{equation}
It is obvious that $\tilde{u}_{m}(T,x)=\exp(-\frac{m\theta|x-a|^{2}}{2})$. In
the following, we shall show that $\tilde{u}_{m}$ is a viscosity supersolution of
PDE (\ref{PDE}) if $t\geq T-\varepsilon$.
It is easy to verify that%
\[
\partial_{t}\tilde{u}_{m}=\frac{\rho m}{1+m(T-t)}\tilde{u}_{m}-\frac
{m^{2}\theta|x-a|^{2}}{2(1+m(T-t))^{2}}\tilde{u}_{m},
\]%
\[
\partial_{x_{i}}\tilde{u}_{m}=-\frac{m\theta(x_{i}-a_{i})}{1+m(T-t)}\tilde
{u}_{m},
\]%
\[
\partial_{x_{i}x_{i}}^{2}\tilde{u}_{m}=-\frac{m\theta}{1+m(T-t)}\tilde{u}%
_{m}+\frac{m^{2}\theta^{2}|x_{i}-a_{i}|^{2}}{(1+m(T-t))^{2}}\tilde{u}_{m},
\]%
\[
\partial_{x_{i}x_{j}}^{2}\tilde{u}_{m}=\frac{m^{2}\theta^{2}(x_{i}-a_{i}%
)(x_{j}-a_{j})}{(1+m(T-t))^{2}}\tilde{u}_{m},\text{ }i\neq j.
\]%
For each $v\in\mathcal{V}_0$, by the assumptions (H1)-(H3), we obtain that
\begin{align*}
&G(-\sigma^{\top}\sigma)\leq-\frac{\underline{\sigma}^{2}}{2}\text{tr}[\sigma
^{\top}\sigma]\leq-\frac{1}{2}(n\wedge d)\lambda \underline{\sigma}^{2},\\&
G(\sigma^{\top}(x-a)(x-a)^{T}\sigma)\leq \frac{\bar{\sigma}^{2}}{2}|x-a|^{2}%
\text{tr}[\sigma^{\top}\sigma]\leq \frac{1}{2}d\Lambda \bar{\sigma}^{2}|x-a|^{2},
\\
&G((-\langle \beta^{jk},x-a\rangle)_{j,k=1}^{d})
\leq \frac{1}{2}L\bar{\sigma
}^{2}d\sqrt{d}|x-a|,\ \
-\langle \alpha,x-a\rangle\leq L|x-a|.
\end{align*}
Note that $L(\bar{\sigma
}^{2}d\sqrt{d}+1)|x-a|\leq L^2(\bar{\sigma
}^{2}d\sqrt{d}+1)^2|x-a|^2((n\wedge d)\lambda \underline{\sigma}^{2})^{-1}+\frac{1}{4}(n\wedge d)\lambda \underline{\sigma}^{2}$.
Then for $(t,x)\in \lbrack T-\varepsilon,T)\times \mathbb{R}^{n}$,
 we have%
\begin{align*}
& \partial_{t}\tilde{u}_{m}+\sup\limits_{v\in\mathcal{V}_0}\{G(\sigma^{\top}D_{x}^{2}\tilde{u}_{m}\sigma+(2\langle
\beta^{jk}(t,x),D_{x}\tilde{u}_{m}\rangle)_{j,k=1}^{d})+\langle \alpha,D_{x}\tilde
{u}_{m}\rangle \}\\
& \leq \partial_{t}\tilde{u}_{m}+\frac{m\theta \tilde{u}_{m}}{1+m(T-t)}%
\sup\limits_{v\in\mathcal{V}_0}G(-\sigma^{\top}\sigma)+\frac{m^{2}\theta^{2}\tilde{u}_{m}}{(1+m(T-t))^{2}%
}\sup\limits_{v\in\mathcal{V}_0}G(\sigma^{\top}(x-a)(x-a)^{\top}\sigma)\\
& +\frac{2m\theta \tilde{u}_{m}}{1+m(T-t)}\sup\limits_{v\in\mathcal{V}_0}G((-\langle \beta^{jk},x-a\rangle
)_{j,k=1}^{d})+\frac{m\theta \tilde{u}_{m}}{1+m(T-t)}\sup\limits_{v\in\mathcal{V}_0}\{-\langle \alpha,x-a\rangle\}
\\
& \leq-\frac{m\theta \tilde{u}_{m}}{1+m(T-t)}|x-a|^{2}(\frac{m}{4(1+m(T-t))}%
-\kappa)\\
& \leq-\frac{m\theta \tilde{u}_{m}}{1+m(T-t)}|x-a|^{2}(\frac{m}{4(1+m\varepsilon
)}-\kappa)\\
& =-\frac{m\theta \tilde{u}_{m}}{1+m(T-t)}|x-a|^{2}\times \frac{m-8\kappa
}{8(1+m\varepsilon)}\\
& \leq0,
\end{align*}
which implies that $\tilde{u}_{m}$ is a viscosity supersolution of PDE
(\ref{PDE}) if $t\geq T-\varepsilon$. Thus by comparison theorem we obtain for
$(t,x)\in \lbrack T-\varepsilon,T]\times \mathbb{R}^{n}$,
\[
u_{m}(t,x)\leq \tilde{u}_{m}(t,x)\leq(1+m(T-t))^{-\rho}.
\]
The proof is complete.
\end{proof}
\begin{remark}\label{myq10}
{\upshape
If $\alpha=\beta^{jk}=0$. From the above proof, we can take $\rho=(n\wedge
d)\lambda \underline{\sigma}^{2}(2d\bar{\sigma}^{2}\Lambda)^{-1}$,
$\theta=(d\bar{\sigma}^{2}\Lambda)^{-1}$, $\varepsilon= T$ ($\kappa=0$),
$m\geq 0$ and the results also hold true.
}
\end{remark}

\begin{remark}
{\upshape
We remark that there is a potential to extend our results to a much more general nonlinear expectation
setting.  In particular, by slightly
more involved estimates, our results still hold for the following PDE (see \cite{HJPS, HJPS1, HJY}):
\begin{equation*}
\left \{
\begin{array}
[c]{l}%
\partial_{t}u+\sup\limits_{v\in\mathcal{V}_0}\{G(\sigma^{\top}D_{x}^{2}u\sigma+2\langle \beta^{jk},D_{x}u\rangle)+f_1(t,D_xu,v))+\langle
\alpha,D_{x}u\rangle+f_2(t,D_xu,v)\}\\ \  =0,\\
u(T,x)=\Phi(x),
\end{array}
\right.
\end{equation*}
where $f_i$ ($i=1,2$) is a Lipschitz continuous function satisfying
$f_i(t,0,v)=0$. The proof is the same without any difficulty. }
\end{remark}

\begin{theorem}
\label{th2}Assume   $\emph{(H1), (H2)}$ and  $\emph{(H3)}$ hold. Then
we have for each $T>0$
\begin{equation}
\mathbb{\hat{E}}[\exp(-\frac{m\theta|X_{T}^{x}-a|^{2}}{2})]\leq(1+m(T\wedge
\varepsilon))^{-\rho},\label{e1}%
\end{equation}
where $X^x_t$ is the $G$-It\^{o} process \eqref{SDE1} and $\theta, \rho, \varepsilon $ are given in Lemma \emph{\ref{le1}}.
In particular, we have%
\begin{equation}
c(\{X_{T}^{x}=a\})=0.\label{e2}%
\end{equation}

\end{theorem}

\begin{proof}
If $T\leq \varepsilon$, it follows from Lemma \ref{le1} and $\mathbb{\hat{E}}[\exp
(-\frac{m\theta|X_{T}^{x}-a|^{2}}{2})]\leq u_{m}(0,x)$ that $\mathbb{\hat
{E}}[\exp(-\frac{m\theta|X_{T}^{x}-a|^{2}}{2})]\leq(1+mT)^{-\rho}$. If
$T>\varepsilon$, by Theorem \ref{th0}(2) and Lemma \ref{le1},  we get that%
\begin{align*}
\mathbb{\hat{E}}[\exp(-\frac{m\theta|X_{T}^{x}-a|^{2}}{2})]  & =\mathbb{\hat
{E}}[\mathbb{\hat{E}}_{T-\varepsilon}[\exp(-\frac{m\theta|X_{T}^{T-\varepsilon
,X_{T-\varepsilon}^{x}}-a|^{2}}{2})]]\\
& \leq \mathbb{\hat{E}}[u_{m}(T-\varepsilon,X_{T-\varepsilon}^{x})]\\
& \leq \mathbb{\hat{E}}[(1+m\varepsilon)^{-\rho}]\\
& =(1+m\varepsilon)^{-\rho}.
\end{align*}
Thus we obtain equation (\ref{e1}). Note that $\exp(-\frac{m\theta|X_{T}^{x}-a|^{2}}%
{2})\geq I_{\{X_{T}^{x}=a\}}$, then%
\[
c(\{X_{T}^{x}=a\})\leq \mathbb{\hat{E}}[\exp(-\frac{m\theta|X_{T}^{x}-a|^{2}}%
{2})]\leq(1+m(T\wedge \varepsilon))^{-\rho}.
\]
Thus we can get $c(\{X_{T}^{x}=a\})=0$ by letting $m\rightarrow \infty$.
\end{proof}

We remark that Martini \cite{MC} proved a similar result in the one dimensional case. By a probabilistic method,
he obtained that the It\^{o} process does not weight single point  under strict ellipticity condition.
In the Theorem \ref{th2}, we also obtain the convergence rate \eqref{e1}, which can be used to estimate the quality of the $G$-It\^{o} processes staying in a ball.

\begin{corollary}\label{myq12}
Assume  $\emph{(H1), (H2)}$ and  $\emph{(H3)}$ hold and $\alpha=\beta^{jk}=0$. Then  for each $t>0$, $y\in\mathbb{R}^n$ and $\epsilon>0$, we have
\[
c(\{|X_t^x-y|\leq \epsilon\})\leq \exp(\frac{\theta}{2})\frac{\epsilon^{2\rho}}{t^{\rho}},
\]
where
$\rho=(n\wedge
d)\lambda \underline{\sigma}^{2}(2d\bar{\sigma}^{2}\Lambda)^{-1}$, $\theta=(d\bar{\sigma}^{2}\Lambda)^{-1}$.
In particular,
\[
\lim\limits_{\epsilon\downarrow 0} \sup\limits_{y\in\mathbb{R}^d}c(\{|X^x_t-y|\leq \epsilon\})=0.
\]
\end{corollary}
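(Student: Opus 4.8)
The plan is to reduce everything to Theorem~\ref{th2} and Remark~\ref{myq10}, applied with the terminal time taken to be $t$ and the point $a$ taken to be $y$. Since $b=h^{jk}=0$, Remark~\ref{myq10} says that the conclusion of Lemma~\ref{le1} holds with $\kappa=0$, hence with $\varepsilon=T$, for \emph{every} $m\geq 0$, and with the stated constants $\alpha=(n\wedge d)\lambda\underline{\sigma}^{2}(2d\bar{\sigma}^{2}\Lambda)^{-1}$, $\beta=(d\bar{\sigma}^{2}\Lambda)^{-1}$. Feeding this into the proof of Theorem~\ref{th2} (where now $T\wedge\varepsilon=t$) yields, for every $m\geq 0$,
\[
\mathbb{\hat{E}}\big[\exp\big(-\tfrac{m\beta|X_{t}^{x}-y|^{2}}{2}\big)\big]\leq(1+mt)^{-\alpha}.
\]

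Next I would bound the indicator pointwise. For every $\omega\in\Omega$,
\[
I_{\{|X_{t}^{x}-y|\leq\epsilon\}}(\omega)\leq\exp\big(\tfrac{m\beta\epsilon^{2}}{2}\big)\exp\big(-\tfrac{m\beta|X_{t}^{x}(\omega)-y|^{2}}{2}\big),
\]
because on the event $\{|X_{t}^{x}-y|\leq\epsilon\}$ the right-hand exponential is at least $\exp(-m\beta\epsilon^{2}/2)$, while off that event the left-hand side vanishes. The dominating random variable is $\Phi(X_{t}^{x})$ with $\Phi\in C_{b.Lip}(\mathbb{R}^{n})$, hence lies in $L_{G}^{1}(\Omega)$; since $c(A)=\sup_{P\in\mathcal{P}}E_{P}[I_{A}]\leq\sup_{P\in\mathcal{P}}E_{P}[Z]=\mathbb{\hat{E}}[Z]$ whenever $I_{A}\leq Z$ with $Z\in L_{G}^{1}(\Omega)$ (using Theorem~\ref{the2.7}), combining the last two displays gives
\[
c(|X_{t}^{x}-y|\leq\epsilon)\leq\exp\big(\tfrac{m\beta\epsilon^{2}}{2}\big)(1+mt)^{-\alpha}\qquad\text{for all }m\geq 0.
\]

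Then I would optimize in the free parameter $m$: choosing $m=\epsilon^{-2}$ turns the exponential factor into $\exp(\beta/2)$ and the second factor into $(1+t\epsilon^{-2})^{-\alpha}=\epsilon^{2\alpha}(\epsilon^{2}+t)^{-\alpha}\leq\epsilon^{2\alpha}t^{-\alpha}$, the last inequality using $\epsilon^{2}+t\geq t$ and $\alpha>0$. This is exactly the claimed bound. For the "in particular" assertion, observe that the bound $\exp(\beta/2)\epsilon^{2\alpha}t^{-\alpha}$ does not depend on $y$, so for fixed $t>0$ one may take the supremum over $y$ and let $\epsilon\downarrow 0$; since $\alpha>0$ the supremum tends to $0$.

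The argument is essentially routine once Theorem~\ref{th2} and Remark~\ref{myq10} are available; the only genuinely substantive points are the choice $m=\epsilon^{-2}$, which is dictated by balancing the $\exp(m\beta\epsilon^{2}/2)$ growth against the $(1+mt)^{-\alpha}$ decay, and the (standard) check that $c(\cdot)\leq\mathbb{\hat{E}}[\cdot]$ may be applied through an $L_{G}^{1}$ dominating variable. I do not anticipate any real obstacle here.
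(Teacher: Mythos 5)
Your proposal is correct and follows essentially the same route as the paper: invoke Remark \ref{myq10} and Theorem \ref{th2} with $a=y$ and terminal time $t$ to get $\mathbb{\hat{E}}[\exp(-\tfrac{m\beta|X_t^x-y|^2}{2})]\leq(1+mt)^{-\alpha}$, dominate the indicator by $\exp(\tfrac{m\beta\epsilon^2}{2})\exp(-\tfrac{m\beta|X_t^x-y|^2}{2})$, and take $m=\epsilon^{-2}$. The uniformity in $y$ of the resulting bound gives the final assertion, exactly as in the paper's argument.
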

\begin{proof}
By Remark \ref{myq10} and Theorem \ref{th2}, we obtain for each $y\in\mathbb{R}^n$ and $m\geq 0$,
\[
\mathbb{\hat{E}}[\exp(-\frac{m\theta|X^x_t-y|^{2}}{2})]\leq \frac{1}{(1+mt)^{\rho}}.
\]
Thus we get for each $m$ and $\epsilon>0$,
\[
\hat{\mathbb{E}}[I_{\{|X^x_t-y|\leq \epsilon\}}]
\leq \exp(\frac{m\theta\epsilon^2}{2})\mathbb{\hat{E}}[\exp(-\frac{m\theta|X^x_t-y|^{2}}{2})]\leq \exp(\frac{m\theta\epsilon^2}{2})\frac{1}{(1+mt)^{\rho}}.
\]
In particular, taking $m=\frac{1}{\epsilon^2}$, we get for each $y\in\mathbb{R}^n$,
\[
c(\{|X^x_t-y|\leq \epsilon\})\leq \exp(\frac{\theta}{2})\frac{\epsilon^{2\rho}}{t^{\rho}},
\]
which completes the proof.
\end{proof}

\begin{example}{\upshape
From the  Corollary \ref{myq12}, we can obtain that for each $t>0$, $y\in\mathbb{R}^d$ and $\epsilon>0$,
\[
c(\{|B_t-y|\leq \epsilon\})\leq \exp(\frac{\theta}{2})\frac{\epsilon^{2\rho}}{t^{\rho}},
\]
where $\rho=\frac{\underline{\sigma}^2}{2\bar{\sigma}^2}$, $\theta=(d\bar{\sigma}^{2})^{-1}$.
This inequality provides a way to study the sample path properties of  non-Markovian It\^{o} process in the Wiener space.
Indeed by Remark \ref{yq1}, we
have \[
P(\{|X_t-y|\leq \epsilon\})\leq \exp(\frac{\theta}{2})\frac{\epsilon^{2\rho}}{t^{\rho}}
\]
and $X_t = \int^t_0 \theta_sdB_s$  is non-differentiable almost everywhere(see \cite{WZ}).
}
\end{example}

By Remark \ref{myq14}, we conclude also  the value function $u$ is the viscosity solution of PDE \eqref{PDE} under the assumptions (H1), (H2) and (H4).
Then we have the following result.
\begin{lemma}
\label{le3} Let $T>0$, $\rho
=\gamma \underline{\sigma}^{2}(8\bar{\sigma}^{2}\Gamma)^{-1}$, $\theta
=(2\bar{\sigma}^{2}\Gamma)^{-1}$, $\varepsilon=(8\kappa)^{-1}\wedge T$, $m\geq8\kappa$
and $u_{m}$ be the solution of PDE \eqref{PDE} with terminal condition
$u_{m}(T,x)=\exp(-\frac{m\theta|x_{i}-a_{i}|^{2}}{2})$, where $a_{i}%
\in \mathbb{R}$,  $\kappa=L^{2}(\bar{\sigma}^{2}d\sqrt{d}%
+1)^{2}(\gamma \underline{\sigma}^{2})^{-1}$. Then for any $(t,x)\in \lbrack
T-\varepsilon,T)\times \mathbb{R}^{n}$, we have%
\begin{equation}
0\leq u_{m}(t,x)\leq(1+m(T-t))^{-\rho}.\label{So2}%
\end{equation}
\end{lemma}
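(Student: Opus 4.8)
The plan is to repeat the strategy of Lemma~\ref{le1}: exhibit an explicit viscosity supersolution of PDE~\eqref{PDE} and conclude by the comparison theorem. Since the terminal datum $u_m(T,\cdot)$ now depends only on the single coordinate $x_i$, the natural candidate is
\[
\tilde{u}_m(t,x)=(1+m(T-t))^{-\alpha}\exp\Big(-\frac{m\beta|x_i-a_i|^{2}}{2(1+m(T-t))}\Big),
\]
which satisfies $\tilde{u}_m(T,x)=\exp(-\tfrac{m\beta|x_i-a_i|^{2}}{2})=u_m(T,x)$. As in Lemma~\ref{le1}, the constant function $0$ is a subsolution of~\eqref{PDE}, so the comparison theorem gives $u_m\geq0$ on $[0,T]\times\mathbb{R}^{n}$; it then remains to verify that $\tilde{u}_m$ is a supersolution on $\{t\geq T-\varepsilon\}$, after which comparison on $[T-\varepsilon,T]\times\mathbb{R}^{n}$ yields $u_m(t,x)\leq\tilde{u}_m(t,x)\leq(1+m(T-t))^{-\alpha}$, which is exactly~\eqref{So2}.

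To check the supersolution inequality I would first note that the only nonzero derivatives of $\tilde{u}_m$ are $\partial_t\tilde{u}_m$, $\partial_{x_i}\tilde{u}_m$ and $\partial^2_{x_ix_i}\tilde{u}_m$, with the same expressions as in Lemma~\ref{le1} after replacing $|x-a|$ by $|x_i-a_i|$. Consequently $D_x^{2}\tilde{u}_m$ has a single nonzero entry, $\partial^2_{x_ix_i}\tilde{u}_m$ in position $(i,i)$, so that $\sigma^{T}D_x^{2}\tilde{u}_m\sigma=\partial^2_{x_ix_i}\tilde{u}_m\,\sigma_i^{T}\sigma_i$, while $H=(2\langle h^{jk},D_x\tilde{u}_m\rangle)_{j,k=1}^{d}=\tfrac{2m\beta\tilde{u}_m}{1+m(T-t)}\,(-(x_i-a_i)h_i^{jk}(t,x))_{j,k=1}^{d}$ involves only the $i$-th components $h_i^{jk}$. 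Using the monotonicity and sublinearity of $G$, together with the bound $|G(A)|\leq\tfrac12\bar{\sigma}^{2}\sqrt{d}\sqrt{\mathrm{tr}[AA^{T}]}$, I would then estimate term by term
\[
G(-\sigma_i^{T}\sigma_i)\leq-\tfrac12\underline{\sigma}^{2}|\sigma_i|^{2}\leq-\tfrac12\gamma\underline{\sigma}^{2},\qquad G(\sigma_i^{T}\sigma_i)\leq\tfrac12\bar{\sigma}^{2}|\sigma_i|^{2}\leq\tfrac12\bar{\sigma}^{2}\Gamma
\]
from (H5), and $G\big((-(x_i-a_i)h_i^{jk}(t,x))_{j,k=1}^{d}\big)\leq\tfrac12\bar{\sigma}^{2}d\sqrt{d}\,L|x_i-a_i|$ together with $\langle b(t,x),D_x\tilde{u}_m\rangle=-\tfrac{m\beta\tilde{u}_m}{1+m(T-t)}(x_i-a_i)b_i(t,x)\leq\tfrac{m\beta\tilde{u}_m}{1+m(T-t)}L|x_i-a_i|$ from (H4). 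The key point is that (H4)--(H5) bound precisely the quantities $|\sigma_i|^{2}$, $|h_i^{jk}|$, $|b_i|$ appearing here, so---unlike in Lemma~\ref{le1}---no Lipschitz decomposition about $a$ (hence no $\delta_{a,T}$) is needed, and the relevant constant is simply $\kappa=L^{2}(\bar{\sigma}^{2}d\sqrt{d}+1)^{2}(\gamma\underline{\sigma}^{2})^{-1}$.

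Substituting these bounds, the choices $\alpha=\gamma\underline{\sigma}^{2}(8\bar{\sigma}^{2}\Gamma)^{-1}$ and $\beta=(2\bar{\sigma}^{2}\Gamma)^{-1}$ are precisely those making $\beta\cdot\tfrac12\gamma\underline{\sigma}^{2}=2\alpha$ and $\beta^{2}\cdot\tfrac12\bar{\sigma}^{2}\Gamma=\tfrac{\beta}{4}$; combining these with the elementary inequality $L(\bar{\sigma}^{2}d\sqrt{d}+1)|x_i-a_i|\leq\kappa|x_i-a_i|^{2}+\tfrac14\gamma\underline{\sigma}^{2}$, all contributions not carrying a factor $|x_i-a_i|^{2}$ cancel and one is left with
\[
\partial_t\tilde{u}_m+G(\sigma^{T}D_x^{2}\tilde{u}_m\sigma+H)+\langle b,D_x\tilde{u}_m\rangle\leq-\frac{m\beta\tilde{u}_m|x_i-a_i|^{2}}{1+m(T-t)}\Big(\frac{m}{4(1+m(T-t))}-\kappa\Big).
\]
On $\{t\geq T-\varepsilon\}$ with $\varepsilon=(8\kappa)^{-1}\wedge T$ one has $1+m(T-t)\leq1+m\varepsilon$, whence $\tfrac{m}{4(1+m(T-t))}-\kappa\geq\tfrac{m}{4(1+m\varepsilon)}-\kappa\geq\tfrac{m-8\kappa}{8(1+m\varepsilon)}\geq0$ for $m\geq8\kappa$, so the right-hand side is $\leq0$ and $\tilde{u}_m$ is a supersolution there. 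Applying the comparison theorem then gives~\eqref{So2}. The only point requiring genuine care---the ``main obstacle,'' such as it is---is the bookkeeping guaranteeing that all terms without a $|x_i-a_i|^{2}$ factor cancel exactly for the stated $\alpha,\beta,\kappa$; the remainder is a direct transcription of the proof of Lemma~\ref{le1}, with (H3) replaced by its one-row analogue (H5) and the $\delta_{a,T}$-type estimates replaced by the uniform bound (H4).
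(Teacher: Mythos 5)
Your proposal is correct and follows essentially the same route as the paper: the same explicit supersolution $\tilde{u}_m$ depending only on $x_i$, the observation that $\sigma^{T}D_x^{2}\tilde{u}_m\sigma=(\partial^2_{x_ix_i}\tilde{u}_m)\sigma_i^{T}\sigma_i$, the bounds $G(-\sigma_i^{T}\sigma_i)\leq-\tfrac12\gamma\underline{\sigma}^{2}$, $G(\sigma_i^{T}\sigma_i)\leq\tfrac12\bar{\sigma}^{2}\Gamma$ from (H5) together with the uniform bounds from (H4) and the Young-type inequality absorbing the linear term into $\kappa|x_i-a_i|^2+\tfrac14\gamma\underline{\sigma}^{2}$, and then comparison on $[T-\varepsilon,T]$. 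The bookkeeping you describe (the choices of $\alpha$, $\beta$, $\kappa$ and the estimate $\tfrac{m}{4(1+m\varepsilon)}-\kappa\geq\tfrac{m-8\kappa}{8(1+m\varepsilon)}\geq0$) matches the paper's computation exactly.
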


\begin{proof}
The proof of $u_{m}(t,x)\geq0$ is the same as in Lemma \ref{le1}. Set%
\begin{equation}
\tilde{u}_{m}(t,x)=(1+m(T-t))^{-\rho}\exp(-\frac{m\theta|x_{i}-a_{i}|^{2}%
}{2(1+m(T-t))}).\label{Sup1}%
\end{equation}
It is obvious that $\tilde{u}_{m}(T,x)=\exp(-\frac{m\theta|x_{i}-a_{i}|^{2}}%
{2})$. In the following, we show that $\tilde{u}_{m}$ is a viscosity
supersolution of PDE (\ref{PDE}) if $t\geq T-\varepsilon$. It is easy to
verify that, for each $v\in\mathcal{V}_0$%
\[
\partial_{t}\tilde{u}_{m}=\frac{\rho m}{1+m(T-t)}\tilde{u}_{m}-\frac
{m^{2}\theta|x_{i}-a_{i}|^{2}}{2(1+m(T-t))^{2}}\tilde{u}_{m},
\]%
\[
\partial_{x_{i}}\tilde{u}_{m}=-\frac{m\theta(x_{i}-a_{i})}{1+m(T-t)}\tilde
{u}_{m},\text{ }%
\]%
\[
\partial_{x_{i}x_{i}}^{2}\tilde{u}_{m}=-\frac{m\theta}{1+m(T-t)}\tilde{u}%
_{m}+\frac{m^{2}\theta^{2}|x_{i}-a_{i}|^{2}}{(1+m(T-t))^{2}}\tilde{u}_{m},
\]%
\[
\partial_{x_{j}}\tilde{u}_{m}=0,\text{ }\partial_{x_{i}x_{j}}^{2}\tilde{u}%
_{m}=0,\text{ }j\neq i,
\]%
\[
\sigma^{\top}D_{x}^{2}\tilde{u}_{m}\sigma=(\partial_{x_{i}x_{i}}^{2}\tilde{u}%
_{m})\sigma_{i}^{\top}\sigma_{i},
\]%
\[
G(-\sigma_{i}^{\top}\sigma_{i})\leq-\frac{\gamma \underline{\sigma}^{2}}{2};\text{
}G(\sigma_{i}^{\top}\sigma_{i})\leq \frac{\bar{\sigma}^{2}\Gamma}{2},
\]%
\[
(\langle \beta^{jk},D_{x}\tilde{u}_{m}\rangle)_{j,k=1}^{d}=(\partial_{x_{i}%
}\tilde{u}_{m})(\beta_{i}^{jk})_{j,k=1}^{d}.
\]
Then for each  $(t,x)\in \lbrack T-\varepsilon,T)\times \mathbb{R}^{n}$, we have%
\begin{align*}
& \partial_{t}\tilde{u}_{m}+\sup\limits_{v\in\mathcal{V}_0}\{G(\sigma^{\top}D_{x}^{2}\tilde{u}_{m}\sigma+(2\langle
\beta^{jk},D_{x}\tilde{u}_{m}\rangle)_{j,k=1}^{d})+\langle \alpha,D_{x}\tilde
{u}_{m}\rangle\} \\
& \leq \partial_{t}\tilde{u}_{m}+\frac{m\theta \tilde{u}_{m}}{1+m(T-t)}%
\sup\limits_{v\in\mathcal{V}_0}G(-\sigma_{i}^{\top}\sigma_{i})+\frac{m^{2}\theta^{2}\tilde{u}_{m}|x_{i}%
-a_{i}|^{2}}{(1+m(T-t))^{2}}\sup\limits_{v\in\mathcal{V}_0}G(\sigma_{i}^{\top}\sigma_{i^*})\\
& +\frac{2m\theta \tilde{u}_{m}}{1+m(T-t)}\sup\limits_{v\in\mathcal{V}_0}G((-(x_{i}-a_{i})\beta_{i}^{jk}%
(t,x))_{j,k=1}^{d})+\frac{m\theta \tilde{u}_{m}}{1+m(T-t)}\sup\limits_{v\in\mathcal{V}_0}(a_{i}-x_{i})\alpha_{i}\\
& \leq-\frac{m\theta \tilde{u}_{m}}{1+m(T-t)}|x_{i}-a_{i}|^{2}(\frac
{m}{4(1+m\varepsilon)}-\kappa)\\
& \leq0,
\end{align*}
which implies that $\tilde{u}_{m}$ is a viscosity supersolution of PDE
(\ref{PDE}) if $t\geq T-\varepsilon$. Thus by comparison theorem we obtain for
$(t,x)\in \lbrack T-\varepsilon,T)\times \mathbb{R}^{n}$,
\[
u_{m}(t,x)\leq \tilde{u}_{m}(t,x)\leq(1+m(T-t))^{-\rho}.
\]
The proof is complete.
\end{proof}

Note  that the above result still holds if assumptions (H4)  is  valid only for some $i$. By a similar analysis as in Theorem \ref{th2}, we can show that $c(\{X_{t}^{x_{i};i}=a_{i}\})=0$ for any $t>0$ and $a_{i}\in \mathbb{R}$.  We remark that one can also obtained  this result by Martini's approach and Girsanov's theorem. However, we can also get the convergence rate.
Indeed,
\begin{theorem}
\label{th4}Under the assumptions  \emph{(H1), (H2)} and \emph{(H4)}, we
obtain that for each  $T>0$
\begin{equation}
\mathbb{\hat{E}}[\exp(-\frac{m\theta|X_{T}^{x_{i};i}-a_{i}|^{2}}{2}%
)]\leq(1+m(T\wedge \varepsilon))^{-\rho},\label{e3}%
\end{equation}
where $\theta, \rho $ and $ \varepsilon$ are given in Lemma \emph{\ref{le3}}.
\end{theorem}

By the above result, we can show that the maximal process does not weight a single point.

\begin{corollary}
Assume $d=1$.
Then we have $c(\{B^*_t=a\})=0$ for each $a\in\mathbb{R}$, where  $B^*_t=\sup_{0\leq s\leq t}B_s$.
\end{corollary}
 \begin{proof}
Without loss of generality, assume $t=1$.
For each $m\geq 1$, set $\varphi_m(x)=\exp(-\frac{m^{\frac{2(1+\rho)}{\rho}}\theta|x-a|^{2}}{2})$, where $\theta, \rho$ are given in Lemma \emph{\ref{le3}}.
Then applying Fatou's lemma yields that \[
c(\{B^*_t=a\})\leq \liminf\limits_{m\rightarrow\infty}\mathbb{\hat{E}}[\varphi_m(\sup\{B_{t^m_1}, B_{t^m_2},\cdots, B_{1}\})],
\]
where $t^m_{i}=\frac{i}{m}$ for each $i\leq m$.

By Remark \ref{myq10}  and Theorem \ref{th4}, we conclude that \begin{align*}
\mathbb{\hat{E}}[\varphi_m(\sup\{B_{t^m_1}, B_{t^m_2}\})]\leq& \mathbb{\hat{E}}[\varphi_m(B_{t^m_1}+\sup\{0, B_{t^m_2}-B_{t^m_1}\})] \\
\leq & \mathbb{\hat{E}}[\varphi_m(B_{t^m_1})]+\mathbb{\hat{E}}[\mathbb{\hat{E}}[\varphi_m(y+ B_{t^m_2}-B_{t^m_1})]_{y=B_{t^m_1}}]\\
\leq & 2(1+m^{\frac{2(1+\rho)}{\rho}}{m}^{-1})^{-\rho}\leq \frac{2}{m^{2+\rho}}.
\end{align*}
Iterating the procedure for $m$ times implies that
\[\mathbb{\hat{E}}[\varphi_m(\sup\{B_{t^m_1}, B_{t^m_2},\cdots, B_{1}\})]\leq \frac{1}{m^{1+\rho}}\]
 and this completes the proof.
 \end{proof}
 \begin{example}{\upshape
By Remark \ref{yq1}, we
have $
P(\{X^*_t=y\})=0$,
where $X^*_t$ is the maximal process of $X_t = \int^t_0 \theta_sdB_s$ and this provides a way to study the maxima of non-Markovian It\^{o} process.
Moreover, one can get that $X_t$ has a unique maxima in the interval $[0, t]$. }
\end{example}

Finally, we shall study the capacity of¡¡the  $G$-It\^{o} process staying in a curve.

\begin{theorem}\label{myq1}
Assume \emph{(H1)}, \emph{(H2)} and \emph{(H4)} hold.
Suppose $f$ satisfies $\partial_{x_i}f,\partial_{x_ix_j}^2f \in
C_{b,Lip}(\mathbb{R}^n)$ and there exist two constants $0<\delta\leq
\Delta<\infty$ such that $$\delta\leq
|\sum\limits_{i=1}^n\partial_{x_i}f\sigma_{i}|^2\leq \Delta.$$ Then
for each $T>0$ we have
\begin{equation*}
c(\{f(X^x_T)=0\})=0.
\end{equation*}
\end{theorem}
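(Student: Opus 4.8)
The plan is to reproduce the PDE--comparison scheme of Lemmas \ref{le1} and \ref{le3}, now using $|f(x)|^{2}$ in the terminal weight in place of $|x-a|^{2}$ (resp.\ $|x_{i}-a_{i}|^{2}$). Fix $T>0$. For constants $\alpha,\beta,\varepsilon,\kappa>0$ to be chosen (with $\varepsilon=(8\kappa)^{-1}\wedge T$ and $m\geq 8\kappa$), let $u_{m}$ be the viscosity solution of PDE \eqref{PDE} with terminal condition $u_{m}(T,x)=\exp(-\frac{m\beta|f(x)|^{2}}{2})$. Since $D_{x}f$ is bounded, this terminal function lies in $C_{b.Lip}(\mathbb{R}^{n})$, so Theorem \ref{th0} applies: $u_{m}$ is continuous, it is the unique viscosity solution, and $u_{m}(0,x)=\hat{\mathbb{E}}[\exp(-\frac{m\beta|f(X_{T}^{x})|^{2}}{2})]$; also $\bar u_{m}\equiv 0$ is a subsolution, so $u_{m}\geq 0$. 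I would then take as a candidate supersolution
\[
\tilde u_{m}(t,x)=(1+m(T-t))^{-\alpha}\exp\Big(-\frac{m\beta|f(x)|^{2}}{2(1+m(T-t))}\Big),
\]
which is $C^{1,2}$ (as $f\in C^{2}$) and coincides with $u_{m}$ at $t=T$.

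The core step is to check that $\tilde u_{m}$ is a classical, hence viscosity, supersolution of PDE \eqref{PDE} on $[T-\varepsilon,T)\times\mathbb{R}^{n}$, i.e.\ that $\partial_{t}\tilde u_{m}+G(\sigma^{T}D_{x}^{2}\tilde u_{m}\sigma+H(D_{x}\tilde u_{m},x,t))+\langle b,D_{x}\tilde u_{m}\rangle\leq 0$ pointwise. Set $\psi=1+m(T-t)$ and $v=\sum_{i=1}^{n}\partial_{x_{i}}f\,\sigma_{i}=\sigma^{T}D_{x}f\in\mathbb{R}^{d}$, so that $\delta\leq|v|^{2}\leq\Delta$. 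Differentiation gives $D_{x}\tilde u_{m}=-\frac{m\beta f}{\psi}(D_{x}f)\tilde u_{m}$ and
\[
\sigma^{T}D_{x}^{2}\tilde u_{m}\sigma+H=\frac{m\beta}{\psi}\tilde u_{m}\Big(\tfrac{m\beta f^{2}}{\psi}vv^{T}-vv^{T}-f\,\sigma^{T}D_{x}^{2}f\,\sigma-2f\big(\langle h^{jk}(t,x),D_{x}f\rangle\big)_{j,k=1}^{d}\Big).
\]
Using subadditivity and positive homogeneity of $G$, the non-degeneracy bounds $G(-vv^{T})\leq-\tfrac12\underline{\sigma}^{2}|v|^{2}\leq-\tfrac12\underline{\sigma}^{2}\delta$ and $G(vv^{T})\leq\tfrac12\bar{\sigma}^{2}|v|^{2}\leq\tfrac12\bar{\sigma}^{2}\Delta$, and the bound $|G(A)|\leq\tfrac12\bar{\sigma}^{2}\sqrt d\,\sqrt{\text{tr}[AA^{T}]}$ applied to the remaining terms (whose norm is $\leq C_{0}|f|$ with $C_{0}$ depending only on $n,d,\bar{\sigma}$, the sup-norms of $D_{x}f$ and $D_{x}^{2}f$, and the constants $L,\Gamma$ of (H4)--(H5)), and taking $\beta=(2\bar{\sigma}^{2}\Delta)^{-1}$, I arrive at
\[
\partial_{t}\tilde u_{m}+G(\cdots)+\langle b,D_{x}\tilde u_{m}\rangle\leq\frac{m\beta\tilde u_{m}}{\psi}\Big(\frac{\alpha}{\beta}-\frac12\underline{\sigma}^{2}\delta-\frac{mf^{2}}{4\psi}+C_{0}|f|\Big).
\]
Young's inequality $C_{0}|f|\leq\frac{mf^{2}}{4\psi}+\frac{C_{0}^{2}\psi}{m}$ together with $\psi\leq 1+m\varepsilon$ on $[T-\varepsilon,T]$ bounds the bracket by $\frac{\alpha}{\beta}-\frac12\underline{\sigma}^{2}\delta+\frac{C_{0}^{2}}{m}+C_{0}^{2}\varepsilon$; choosing $\alpha=\underline{\sigma}^{2}\delta(8\bar{\sigma}^{2}\Delta)^{-1}$ (so that $\frac{\alpha}{\beta}=\frac14\underline{\sigma}^{2}\delta$) and $\kappa=8C_{0}^{2}(\underline{\sigma}^{2}\delta)^{-1}$, the constraints $m\geq 8\kappa$ and $\varepsilon\leq(8\kappa)^{-1}$ force this bracket to be $\leq 0$, which is the desired supersolution property.

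With $\tilde u_{m}$ in hand, the comparison principle on $[T-\varepsilon,T]$ yields $u_{m}\leq\tilde u_{m}\leq(1+m(T-t))^{-\alpha}$ there, in particular $u_{m}(T-\varepsilon,\cdot)\leq(1+m\varepsilon)^{-\alpha}$. Arguing exactly as in the proof of Theorem \ref{th2} --- the tower property $\hat{\mathbb{E}}[\cdot]=\hat{\mathbb{E}}[\hat{\mathbb{E}}_{T-\varepsilon}[\cdot]]$, the flow property $X_{T}^{T-\varepsilon,X_{T-\varepsilon}^{x}}=X_{T}^{x}$, and Theorem \ref{th0}(2) to identify $\hat{\mathbb{E}}_{T-\varepsilon}[\exp(-\frac{m\beta|f(X_{T}^{x})|^{2}}{2})]$ with $u_{m}(T-\varepsilon,X_{T-\varepsilon}^{x})$; the subcase $T\leq\varepsilon$ being immediate from $u_{m}(0,x)\leq(1+mT)^{-\alpha}$ --- I obtain $\hat{\mathbb{E}}[\exp(-\frac{m\beta|f(X_{T}^{x})|^{2}}{2})]\leq(1+m(T\wedge\varepsilon))^{-\alpha}$ for all $m\geq 8\kappa$. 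Since $I_{\{f(X_{T}^{x})=0\}}\leq\exp(-\frac{m\beta|f(X_{T}^{x})|^{2}}{2})$, this gives $c(\{f(X_{T}^{x})=0\})\leq(1+m(T\wedge\varepsilon))^{-\alpha}$, and letting $m\to\infty$ (using $\alpha>0$ and $T\wedge\varepsilon>0$) gives $c(\{f(X_{T}^{x})=0\})=0$.

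I expect the main obstacle to be the pointwise differential inequality for $\tilde u_{m}$: in contrast to Lemmas \ref{le1} and \ref{le3}, here $D_{x}f$ is non-constant and the Hessian $D_{x}^{2}f$ genuinely appears, so one must single out the strictly negative contribution $\frac{m\beta\tilde u_{m}}{\psi}G(-vv^{T})$ --- available precisely because of the standing hypothesis $\delta\leq|\sum_{i}\partial_{x_{i}}f\,\sigma_{i}|^{2}$ --- and absorb, via Young's inequality, all lower-order terms of size $O(|f|)$ coming from $D_{x}^{2}f$, $h^{jk}$ and $b$. In effect, $\delta>0$ here plays the role that the non-degeneracy assumption (H3) plays in Lemma \ref{le1}.
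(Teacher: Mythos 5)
Your proof is correct, but it takes a genuinely different route from the paper's. The paper deliberately avoids constructing a supersolution adapted to the curve $\{f=0\}$ (see the remark immediately following the theorem): it applies the $G$-It\^{o} formula to $f(X^x_t)$, observes that the augmented process $\tilde X^x_t=((X^x_t)^T,f(X^x_t))^T$ solves an $(n+1)$-dimensional $G$-SDE of the form \eqref{SDE1} whose last component has diffusion row $\partial_{x_i}f\,\sigma_i$ (so (H5) holds for that component with constants $\delta,\Delta$) and bounded drift and $h$-coefficients (by (H4)--(H5) and the boundedness of $\partial_{x_i} f,\partial^2_{x_ix_j} f$), and then simply quotes Theorem \ref{th4} for that coordinate with $a_{n+1}=0$. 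You instead redo the PDE--comparison scheme of Lemmas \ref{le1} and \ref{le3} directly with the weight $|f(x)|^2$, and your verification is sound: the strictly negative term $G(-vv^{T})\le -\tfrac12\underline{\sigma}^{2}\delta$ plays the role that (H3) (resp.\ (H5)) plays there, the $O(|f|)$ terms coming from $D_x^2f$, $h^{jk}$ and $b$ are absorbed via Young's inequality into $-\tfrac{mf^{2}}{4\psi}$, and with $\beta=(2\bar{\sigma}^{2}\Delta)^{-1}$, $\alpha=\underline{\sigma}^{2}\delta(8\bar{\sigma}^{2}\Delta)^{-1}$, $m\ge 8\kappa$, $\varepsilon\le(8\kappa)^{-1}$ the bracket is indeed nonpositive; moreover the terminal datum $\exp(-\tfrac{m\beta|f|^{2}}{2})$ lies in $C_{b.Lip}(\mathbb{R}^{n})$ because $D_xf$ is bounded, so Theorem \ref{th0} and the comparison theorem apply, and your concluding tower-property and $m\to\infty$ argument is exactly that of Theorem \ref{th2}. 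What the paper's probabilistic reduction buys is economy: no new PDE estimate, just a reuse of Theorem \ref{th4}. What your direct construction buys is an explicit decay estimate $\hat{\mathbb{E}}[\exp(-\tfrac{m\beta|f(X^x_T)|^{2}}{2})]\le(1+m(T\wedge\varepsilon))^{-\alpha}$ for the curve itself, with constants expressed through $\delta$, $\Delta$ and the bounds on the derivatives of $f$, and it shows that the supersolution the paper describes as ``very difficult to find'' can in fact be written down under the stated hypotheses.
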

\begin{proof}
Applying the $G$-It\^{o} formula yields that
\begin{align*}
f(X^x_t)=&f(x)+\int^t_0\partial_{x_i}f\alpha_{i}(s)ds+\int^t_0[\partial_{x_i}f\beta_i^{jk}+\frac{1}{2}\partial^2_{x_ix_l}f\sigma_{ij}\sigma_{lk}](s)d\langle B^j,B^k\rangle_s\\&
+\int^t_0\partial_{x_i}f\sigma_i(s)dB_s.
\end{align*}
Thus $\tilde{X}^x_t=((X^x_t)^{\top},f(X^x_t))^{\top}$ can be seen as  the $G$-It\^{o} process \eqref{SDE1}
corresponding
 to \begin{align*}
\tilde{\alpha}(t)=\left(                 %
  \begin{array}{ccc}
    & {\alpha}(t) \\
    & \partial_{x_i}f\alpha_{i}(t)  \\
  \end{array}
\right),
\tilde{\sigma}(t)=\left(                 %
  \begin{array}{ccc}
    & {\sigma}(t) \\
    & \partial_{x_i}f\sigma_{i}(t)  \\
  \end{array}
\right)\end{align*} and
 \begin{align*}
\tilde{\beta}^{jk}(t)=\left(                 %
  \begin{array}{ccc}
    & {\beta}^{jk}(t) \\
    & [\partial_{x_i}f\beta_i^{jk}+\frac{1}{2}\partial^2_{x_ix_l}f\sigma_{ij}\sigma_{lk}](t)  \\
  \end{array}
\right).
\end{align*}
Thus we have $c(\{f(X^x_T)=0\})=0$ and this completes the proof.
\end{proof}
\begin{example}\label{my1}{\upshape   The property required upon the gradient of the curve $f$ is necessary. Indeed,
we take $n=2$, $d=1$,
$x=0$, $b=0$, $h^{jk}=0$, $\sigma=(1,-1)^T$ and $f(x,y)=x-y$. Then
$f(B_T,B_T)=0, q.s..$ However $\partial_{x}f\sigma_1+\partial_{y}f\sigma_2=0$.
}
\end{example}

\subsection{Some applications}
In this subsection, we shall  identify some non-trivial quasi-continuous Borel measurable functions on $\Omega$ and we always assume  (H1), (H2) and {(H4)} hold.

 \begin{theorem}
\label{le6}Let $\xi \in L_{G}^{1}(\Omega;\mathbb{R}^{k})$ and $A\in
\mathcal{B}(\mathbb{R}^{k})$ with $c(\{ \xi \in \partial A\})=0$. Then
$I_{\{ \xi \in A\}}\in L_{G}^{1}(\Omega)$.
\end{theorem}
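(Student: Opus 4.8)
The plan is to reduce the statement to the characterization of $L_G^1(\Omega)$ given in Theorem \ref{new-qua}. Since $I_{\{\xi\in A\}}$ is bounded, the integrability condition $\lim_{N\to\infty}\mathbb{\hat E}[|I_{\{\xi\in A\}}|I_{\{|I_{\{\xi\in A\}}|\ge N\}}]=0$ holds trivially. So the real content is to produce a quasi-continuous version of $I_{\{\xi\in A\}}$, i.e. to show $I_{\{\xi\in A\}}$ equals a quasi-continuous function quasi-surely. First I would fix $\varepsilon>0$ and, using $\xi\in L_G^1(\Omega;\mathbb{R}^k)$ together with Theorem \ref{new-qua} applied componentwise, choose an open set $O_1$ with $c(O_1)<\varepsilon/2$ such that $\xi|_{O_1^c}$ is continuous.

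Next I would handle the boundary. By hypothesis $c(\{\xi\in\partial A\})=0$; the set $\{\xi\in\partial A\}$ need not be open, so I would approximate it from outside. Using that $\partial A$ is closed and $\xi|_{O_1^c}$ is continuous, the set $\{\xi\in\partial A\}\cap O_1^c$ is closed in $O_1^c$; combined with the regularity property of $c$ (namely $c(F_n)\downarrow c(F)$ for closed $F_n\downarrow F$, stated in the excerpt) one can find an open set $O_2$ with $\{\xi\in\partial A\}\subset O_2$ and $c(O_2)<\varepsilon/2$. Set $O=O_1\cup O_2$, an open set with $c(O)<\varepsilon$. On $O^c$ the map $\xi$ is continuous and avoids $\partial A$ entirely, so $O^c$ is partitioned into the relatively open sets $\{\xi\in A^\circ\}\cap O^c$ and $\{\xi\in (\bar A)^c\}\cap O^c$ (using $O^c\cap\{\xi\in\partial A\}=\emptyset$). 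Hence $I_{\{\xi\in A\}}$ restricted to $O^c$ coincides with the locally constant — therefore continuous — function that is $1$ on the first piece and $0$ on the second. This shows $I_{\{\xi\in A\}}$ itself is quasi-continuous (not merely that it has a quasi-continuous version), and then Theorem \ref{new-qua} gives $I_{\{\xi\in A\}}\in L_G^1(\Omega)$.

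The main obstacle I anticipate is the second step: producing the open set $O_2$ covering $\{\xi\in\partial A\}$ with small capacity. One has $c(\{\xi\in\partial A\})=0$, but a polar set is not automatically contained in an open set of small capacity for a general capacity; here, however, we may use that $\mathbb{\hat E}$ enjoys the regularity from \cite{DHP11} (equivalently, that $L_G^1(\Omega)$ admits quasi-continuous versions, Theorem \ref{new-qua}). Concretely I would work inside $O_1^c$, where $\xi$ is continuous, so $\xi^{-1}(\partial A)\cap O_1^c$ is closed in $O_1^c$; extending to a closed subset $F$ of $\Omega$ with $c(F)=0$ and invoking $c(F_n)\downarrow c(F)=0$ for a decreasing sequence of open neighborhoods $F_n$ of $F$ yields the required $O_2$. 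If one prefers to avoid this regularity argument, an alternative is to approximate $I_A$ directly: pick $\varphi_j\in C_b(\mathbb{R}^k)$ with $\varphi_j\to I_A$ pointwise off $\partial A$ and $0\le\varphi_j\le1$; then $\varphi_j(\xi)\in L_G^1(\Omega)$ (continuous function of $\xi$), and one shows $\varphi_j(\xi)\to I_{\{\xi\in A\}}$ in $L_G^1$ by dominated convergence (Theorem \ref{new-qua1}) using $c(\{\xi\in\partial A\})=0$, so that the limit lies in the closed space $L_G^1(\Omega)$. Either route closes the proof.
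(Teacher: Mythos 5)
Your main argument is correct and is essentially the paper's own proof: take an open set $O_1$ of small capacity off which $\xi$ is continuous, intersect the closed $1/i$-neighborhoods of $\partial A$ pulled back by $\xi$ with $O_1^{c}$, use $c(F_n)\downarrow c(F)$ for closed sets to pick $i_0$ with small capacity, and observe that on the complement of the resulting open set $\xi$ stays a positive distance from $\partial A$, so $I_{\{\xi\in A\}}$ is continuous there and Theorem \ref{new-qua} applies. Only a caution on your optional alternative: applying Theorem \ref{new-qua1} to a one-sided approximation $\varphi_j(\xi)\downarrow I_{\{\xi\in A\}}$ to get $L_G^1$-convergence is circular, since that clause requires the limit to be in $L_G^{1}(\Omega)$ already (a two-sided sandwich $\psi_j(\xi)\leq I_{\{\xi\in A\}}\leq\varphi_j(\xi)$ with $\hat{\mathbb{E}}[\varphi_j(\xi)-\psi_j(\xi)]\downarrow 0$ would fix it), but this does not affect your primary proof.
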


\begin{proof}
For each $\epsilon>0$, since $\xi \in L_{G}^{1}(\Omega;\mathbb{R}^{k})$, we can
find an open set $O\subset \Omega$ with $c(O)\leq \frac{\epsilon}{2}$ such that
$\xi|_{O^{c}}$ is continuous. Set $D_{i}=\{x\in$ $\mathbb{R}^{k}:d(x,\partial
A)\leq \frac{1}{i}\}$ and $A_{i}=\{x\in$ $\mathbb{R}^{k}:d(x,\partial
A)<\frac{1}{i}\}$, it is easy to check that $\{ \xi \in D_{i}\} \cap O^{c}$ is
closed, $\{ \xi \in A_{i}\} \subset \{ \xi \in D_{i}\}$ and $\{ \xi \in D_{i}\} \cap
O^{c}\downarrow \{ \xi \in \partial A\} \cap O^{c}$. Then we have%
\[
c(\{ \xi \in D_{i}\} \cap O^{c})\downarrow c(\{ \xi \in \partial A\} \cap O^{c})=0.
\]
Thus we can find an $i_{0}$ such that $c(\{ \xi \in A_{i_{0}}\} \cap O^{c}%
)\leq \frac{\epsilon}{2}$. Set $O_{1}=\{ \xi \in A_{i_{0}}\} \cup O$, it is easy
to verify that $c(O_{1})\leq \epsilon$, $O_{1}^{c}=\{ \xi \in A_{i_{0}}^{c}\} \cap
O^{c}$ is closed and $I_{\{ \xi \in A\}}$ is continuous on $O_{1}^{c}$. Thus
$I_{\{ \xi \in A\}}$ is quasi-continuous, which implies $I_{\{ \xi \in A\}}\in
L_{G}^{1}(\Omega)$.
\end{proof}

Now we consider the capacity of $X_{s}^{t,\xi}$ hitting the boundary of cubes, where $X^{t,\xi}$ is the $G$-It\^{o} process \eqref{SDE1}
starting at  $t$ and  from  the random variable $\xi$.
Then, by the above theorem, we can get a kind of quasi-continuous random variables associated to $G$-It\^{o} processes.

\begin{lemma}\label{myq5}
Let $A=[a,b]$, where $a$, $b\in \mathbb{R}^{n}$ with $a\leq b$. Then for each
given $t\geq0$, $\xi \in L_{G}^{2}(\Omega_{t};\mathbb{R}^{n})$, $s>t$, we have
$c(\{X_{s}^{t,\xi}\in \partial A\})=0$.
\end{lemma}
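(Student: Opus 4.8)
The plan is to run the argument of Theorems \ref{th2} and \ref{th4} with the random initial datum $\xi$ in place of the deterministic point $x$; the representation $Y_{t}^{t,\xi}=u(t,\xi)$ of Theorem \ref{th0}(2), which turns a conditional $G$-expectation into the evaluation of a deterministic PDE solution, is exactly what makes this substitution legitimate. First I would observe, as in the proof of Lemma \ref{co5}, the inclusion
\[
\{X_{s}^{t,\xi}\in\partial A\}\subset\bigcup_{i=1}^{n}(\{X_{s}^{t,\xi;i}=a_{i}\}\cup\{X_{s}^{t,\xi;i}=b_{i}\}),
\]
where $X_{s}^{t,\xi;i}$ denotes the $i$-th coordinate of $X_{s}^{t,\xi}$, so by sub-additivity of the capacity $c$ it suffices to show $c(\{X_{s}^{t,\xi;i}=c_{i}\})=0$ for every $i\le n$ and every real $c_{i}$.

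Fix such $i$ and $c_{i}$. Let $\alpha,\beta,\kappa$ and $\varepsilon=(8\kappa)^{-1}$ be the constants of Lemma \ref{le3} with $a_{i}$ there taken equal to $c_{i}$; the crucial point is that $\kappa=L^{2}(\bar{\sigma}^{2}d\sqrt{d}+1)^{2}(\gamma\underline{\sigma}^{2})^{-1}$, hence also $\alpha,\beta,\varepsilon$, depend only on the coefficients of the $G$-SDE, not on $c_{i}$ or on $\xi$. For $m\ge 8\kappa$ let $u_{m}$ be the bounded continuous solution of PDE \eqref{PDE} with terminal time $s$ and terminal condition $u_{m}(s,x)=\exp(-\tfrac{m\beta|x_{i}-c_{i}|^{2}}{2})$. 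Since this terminal function lies in $C_{b.Lip}(\mathbb{R}^{n})$, Theorem \ref{th0}(2) gives $\mathbb{\hat{E}}_{r}[\exp(-\tfrac{m\beta|X_{s}^{r,\eta;i}-c_{i}|^{2}}{2})]=u_{m}(r,\eta)$ for all $r\in[t,s]$ and $\eta\in L_{G}^{2}(\Omega_{r};\mathbb{R}^{n})$, while Lemma \ref{le3} gives $0\le u_{m}(r,x)\le(1+m(s-r))^{-\alpha}$ on $[s-\varepsilon,s)\times\mathbb{R}^{n}$. Now, exactly as in the proof of Theorem \ref{th2}, I split into two cases. If $s-t\le\varepsilon$, then taking $r=t$, $\eta=\xi$ and using the tower property $\mathbb{\hat{E}}[Z]=\mathbb{\hat{E}}[\mathbb{\hat{E}}_{t}[Z]]$ together with $u_{m}(t,\xi)\le(1+m(s-t))^{-\alpha}$ q.s.\ yields $\mathbb{\hat{E}}[\exp(-\tfrac{m\beta|X_{s}^{t,\xi;i}-c_{i}|^{2}}{2})]=\mathbb{\hat{E}}[u_{m}(t,\xi)]\le(1+m(s-t))^{-\alpha}$. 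If $s-t>\varepsilon$, then using the flow property $X_{s}^{t,\xi}=X_{s}^{s-\varepsilon,X_{s-\varepsilon}^{t,\xi}}$ of the $G$-SDE \eqref{SDE2} (with $X_{s-\varepsilon}^{t,\xi}\in L_{G}^{2}(\Omega_{s-\varepsilon};\mathbb{R}^{n})$), the tower property at time $s-\varepsilon$, and the bound $u_{m}(s-\varepsilon,\cdot)\le(1+m\varepsilon)^{-\alpha}$ yields $\mathbb{\hat{E}}[\exp(-\tfrac{m\beta|X_{s}^{t,\xi;i}-c_{i}|^{2}}{2})]=\mathbb{\hat{E}}[u_{m}(s-\varepsilon,X_{s-\varepsilon}^{t,\xi})]\le(1+m\varepsilon)^{-\alpha}$. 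In either case, since $\exp(-\tfrac{m\beta|X_{s}^{t,\xi;i}-c_{i}|^{2}}{2})\ge I_{\{X_{s}^{t,\xi;i}=c_{i}\}}$, we obtain $c(\{X_{s}^{t,\xi;i}=c_{i}\})\le(1+m((s-t)\wedge\varepsilon))^{-\alpha}$; letting $m\to\infty$ gives $c(\{X_{s}^{t,\xi;i}=c_{i}\})=0$, and summing over the at most $2n$ faces of $\partial A$ completes the proof.

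The only delicate point is the bookkeeping of transferring the deterministic estimates of Theorems \ref{th2}--\ref{th4} to a random initial condition: one should check that Theorem \ref{th0}(2) does apply with the bounded Lipschitz terminal datum $\exp(-\tfrac{m\beta|x_{i}-c_{i}|^{2}}{2})$ and a (possibly unbounded) $\xi\in L_{G}^{2}(\Omega_{t};\mathbb{R}^{n})$, and that, in the case $s-t>\varepsilon$, the flow relation $X_{s}^{t,\xi}=X_{s}^{s-\varepsilon,X_{s-\varepsilon}^{t,\xi}}$ holds q.s.\ with $X_{s-\varepsilon}^{t,\xi}\in L_{G}^{2}(\Omega_{s-\varepsilon};\mathbb{R}^{n})$, so that Theorem \ref{th0}(2) can be re-applied at the intermediate time $s-\varepsilon$ --- precisely the step already used, for deterministic $x$, in the proof of Theorem \ref{th2}. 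Given these standard properties of $G$-SDEs, the estimate is literally the one in Theorem \ref{th2}, carried out coordinate-wise as in Theorem \ref{th4}, with $\xi$ in place of $x$.
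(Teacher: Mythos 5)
Your argument is correct, but it is a genuinely different route from the paper's. You extend the quantitative estimates themselves to random initial data: decompose $\partial A$ into faces, take the explicit supersolution bound of Lemma \ref{le3} with terminal datum $\exp(-\tfrac{m\beta|x_{i}-c_{i}|^{2}}{2})\in C_{b.Lip}(\mathbb{R}^{n})$, and transfer it to $\xi$ via Theorem \ref{th0}(2), the tower property, and (when $s-t>\varepsilon$) the flow property $X_{s}^{t,\xi}=X_{s}^{s-\varepsilon,X_{s-\varepsilon}^{t,\xi}}$ with $X_{s-\varepsilon}^{t,\xi}\in L_{G}^{2}(\Omega_{s-\varepsilon};\mathbb{R}^{n})$ --- i.e.\ you rerun Theorems \ref{th2} and \ref{th4} with random initial condition, obtaining the explicit bound $c(\{X_{s}^{t,\xi;i}=c_{i}\})\leq(1+m((s-t)\wedge\varepsilon))^{-\alpha}$ before letting $m\to\infty$. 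The paper instead argues softly from the already-established deterministic case: it picks $\varphi_{k}\in C_{b.Lip}(\mathbb{R}^{n})$ with $\varphi_{k}\downarrow I_{\partial A}$, uses Theorem \ref{th0}(2) to write $\mathbb{\hat{E}}[\varphi_{k}(X_{s}^{t,\xi})]=\mathbb{\hat{E}}[\phi_{k}(\xi)]$ with $\phi_{k}(x)=\mathbb{\hat{E}}[\varphi_{k}(X_{s}^{t,x})]$, invokes Lemma \ref{co5} together with the downward convergence Theorem \ref{new-qua1} to get $\phi_{k}(\xi)\downarrow 0$, and applies Theorem \ref{new-qua1} once more to conclude $\mathbb{\hat{E}}[\phi_{k}(\xi)]\downarrow0$. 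Your version buys a quantitative capacity decay uniform in $\xi$ and in the level $c_{i}$, at the price of explicitly using the flow property at the intermediate time $s-\varepsilon$ (standard for $G$-SDEs, and indeed implicitly used in Theorem \ref{th2}, but worth stating); the paper's version needs no flow decomposition and no re-derivation of estimates, only the representation $Y_{t}^{t,\xi}=u(t,\xi)$ for Lipschitz terminal data and monotone convergence, but yields no rate. Both are sound; your only loose ends are the ones you flag yourself, and they are standard facts about $G$-SDEs (also note $\varepsilon$ should be $(8\kappa)^{-1}\wedge s$, which your case split absorbs anyway).
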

\begin{proof}
It suffices to prove that $c(\{X_{s}^{t,\xi_i;i}=a_i\})=c(\{X_{s}^{t,\xi_i;i}=b_i\})=0$.
We shall only show that $c(\{X_{s}^{t,\xi_1;1}=a_1\})=0$ and the other cases can be proved in a similar way.
For each $m$, set $\varphi_m(x)=\exp(-\frac{m\theta|x_1-a_1|^{2}}{2})$.
Applying Theorems \ref{th0} and \ref{th4},   we
 conclude that
\[
\mathbb{\hat{E}}[\varphi_{m}(X_{s}^{t,\xi})] \leq(1+m((s-t)\wedge \varepsilon))^{-\rho}
\]
 Letting $m\rightarrow\infty$ yields the desired result and this completes the proof.
\end{proof}
\begin{theorem}
\label{th7}Let $A_{i}=[a^{i},b^{i}]$ with $a^{i}$, $b^{i}\in \mathbb{R}^{n}$, $a^{i}\leq
b^{i}$ for $i\geq1$ and $D\in \mathcal{B}(\mathbb{R}^{n})$ with $\partial
D\subset \cup_{i=1}^{\infty}\partial A_{i}$. Then for each given $t\geq0$,
$\xi \in L_{G}^{2}(\Omega_{t};\mathbb{R}^{n})$, $s>t$, we have $I_{\{X_{s}%
^{t,\xi}\in D\}}\in L_{G}^{1}(\Omega_{s})$. In particular, $I_{\{X_{s}^{x}\in
D\}}\in L_{G}^{1}(\Omega_{s})$.
\end{theorem}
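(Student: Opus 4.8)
The plan is to combine Theorem \ref{le6} with Lemma \ref{myq5}. Recall that Theorem \ref{le6} says: if $\zeta\in L_G^1(\Omega;\mathbb{R}^k)$ and $D\in\mathcal{B}(\mathbb{R}^k)$ satisfies $c(\{\zeta\in\partial D\})=0$, then $I_{\{\zeta\in D\}}\in L_G^1(\Omega)$. So the whole argument reduces to two things: first, that $X_s^{t,\xi}\in L_G^2(\Omega_s;\mathbb{R}^n)$ (hence in $L_G^1(\Omega_s;\mathbb{R}^n)$), which is already guaranteed by the existence theorem for $G$-SDE \eqref{SDE2} since $(X_r^{t,\xi})_{r\in[t,s]}\in M_G^2(t,s;\mathbb{R}^n)$ and the solution at a fixed time lies in $L_G^2(\Omega_s;\mathbb{R}^n)$; and second, that $c(\{X_s^{t,\xi}\in\partial D\})=0$.

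For the second point, I would use the hypothesis $\partial D\subset\bigcup_{i=1}^\infty\partial A_i$ together with countable subadditivity of the capacity $c$. Indeed,
\[
c(\{X_s^{t,\xi}\in\partial D\})\leq c\Big(\bigcup_{i=1}^\infty\{X_s^{t,\xi}\in\partial A_i\}\Big)\leq\sum_{i=1}^\infty c(\{X_s^{t,\xi}\in\partial A_i\}),
\]
and each term on the right vanishes by Lemma \ref{myq5}, since each $A_i=[a^i,b^i]$ is a cube. Hence $c(\{X_s^{t,\xi}\in\partial D\})=0$. Applying Theorem \ref{le6} with $\zeta=X_s^{t,\xi}$ and the Borel set $D$ yields $I_{\{X_s^{t,\xi}\in D\}}\in L_G^1(\Omega_s)$. (One should check that the argument of Theorem \ref{le6} produces a function measurable with respect to $\mathcal{F}_s$, which is clear because $X_s^{t,\xi}$ is $\mathcal{F}_s$-measurable and the open set $O$ in that proof can be taken in $\Omega_s$.)

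The special case $I_{\{X_s^x\in D\}}\in L_G^1(\Omega_s)$ follows by taking $t=0$ and $\xi=x$ deterministic: then $X_s^{0,x}=X_s^x$ by definition, and the general statement specializes directly.

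I expect no serious obstacle here — the theorem is essentially a packaging of the earlier results. The only mild subtlety is the appeal to countable subadditivity of $c$ (rather than just finite subadditivity); this is a standard property of the Choquet-type capacity $c(A)=\sup_{P\in\mathcal{P}}P(A)$, since $c\big(\bigcup_i A_i\big)=\sup_P P\big(\bigcup_i A_i\big)\leq\sup_P\sum_i P(A_i)\leq\sum_i\sup_P P(A_i)=\sum_i c(A_i)$. The other point to be careful about is that Lemma \ref{myq5} is stated only for $s>t$, so the statement as given (with $s>t$) matches exactly, and for the ``in particular'' part with $t=0$ one needs $s>0$, which is implicit.
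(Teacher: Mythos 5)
Your proposal is correct and follows exactly the route the paper takes: the paper's own proof is the one-line observation that the theorem is a direct consequence of Lemma \ref{myq5} and Theorem \ref{le6}, and your argument simply supplies the obvious intermediate details (countable subadditivity of $c$ over the sets $\partial A_i$ and the membership $X_s^{t,\xi}\in L_G^2(\Omega_s;\mathbb{R}^n)$ from the existence theorem). No discrepancies to report.
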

\begin{proof}
This is a direct consequence of Lemma \ref{myq5} and Theorem \ref{le6}.
\end{proof}

In the following, we only consider the capacity of $B_t$ on the sphere. But the method can be applied  to  deal with $X_{s}^{t,\xi}$.

\begin{lemma}\label{myq2}
Let $D$ be a $d$-dimensional  sphere.
Then we have for each $t>0$,
\[
c(\{B_t\in\partial D\})=0.
\]
\end{lemma}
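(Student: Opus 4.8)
The plan is to reduce the statement about $B_t$ lying on a $d$-dimensional sphere $\partial D = \{x \in \mathbb{R}^d : |x - x_0| = r\}$ to the curve result already established in Theorem \ref{myq1}. Recall from Remark \ref{my1} that taking $n = d$, $x = 0$, $b = 0$, $h^{jk} = 0$ and $\sigma = I_{d \times d}$ in \eqref{SDE1} gives $X_t^x = B_t$, so that all results of this section apply to $G$-Brownian motion directly. The natural choice is the function $f(y) = |y - x_0|^2 - r^2$, whose zero set is exactly $\partial D$, so $\{B_t \in \partial D\} = \{f(B_t) = 0\}$.

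First I would check that $f$ meets the regularity hypotheses of Theorem \ref{myq1}: we have $\partial_{y_i} f(y) = 2(y_i - x_{0,i})$ and $\partial^2_{y_i y_j} f(y) = 2\delta_{ij}$, so $\partial^2_{y_i y_j} f \in C_{b,Lip}(\mathbb{R}^d)$ trivially, but $\partial_{y_i} f$ is unbounded. This is the main obstacle: Theorem \ref{myq1} as stated requires $\partial_{y_i} f \in C_{b,Lip}(\mathbb{R}^d)$ and the nondegeneracy bound $\delta \leq |\sum_i \partial_{y_i} f \, \sigma_i|^2 \leq \Delta$, and $f(y) = |y-x_0|^2 - r^2$ satisfies neither globally (the gradient vanishes at $y = x_0$ and grows at infinity). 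The fix is a standard localization: since we only care about the behavior of $f$ near $\partial D$, choose $f$ to be a $C^\infty_b$ function that agrees with $|y - x_0|^2 - r^2$ on a neighborhood of the sphere $\partial D$ and is, say, constant equal to $1$ outside a slightly larger ball and constant equal to $-1$ near $y = x_0$; more precisely, take a smooth radial profile $g : [0,\infty) \to \mathbb{R}$ with $g(\rho) = \rho - r^2$ for $\rho$ in a neighborhood of $r^2$, $g$ strictly increasing on that neighborhood, $g \equiv$ const outside a compact set, and set $f(y) = g(|y - x_0|^2)$. Then $f \in C^\infty_b$, $\partial_{y_i} f$ and $\partial^2_{y_i y_j} f$ are in $C_{b,Lip}(\mathbb{R}^d)$, and $\{f = 0\} = \partial D$.

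With this $f$, I still need the nondegeneracy condition $\delta \leq |\sum_i \partial_{y_i} f(B_t) \, \sigma_i|^2 \leq \Delta$. With $\sigma = I_{d\times d}$ we have $\sigma_i = e_i$, so $\sum_i \partial_{y_i} f \, \sigma_i = D_y f$ and the condition becomes $\delta \leq |D_y f(B_t)|^2 \leq \Delta$. But $D_y f$ vanishes away from the annular region where $g$ is nonconstant, so the lower bound fails off a neighborhood of $\partial D$. Here I would again use a localization argument at the level of the conclusion rather than trying to force the hypotheses: cover $\partial D$ by finitely many (or a countable family of) regions, or more simply observe that the proof of Theorem \ref{myq1} only uses the nondegeneracy of $f$ on the event $\{f(X_T^x) = 0\}$, i.e., on $\partial D$, where $|D_y f|^2 = |2(y - x_0)|^2 = 4r^2 > 0$ is bounded and bounded away from zero. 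So the clean route is: modify $f$ so that it is globally smooth and bounded with bounded derivatives, equals $g(|y-x_0|^2)$ near $\partial D$, and $|D_y f|^2 \geq \delta > 0$ holds at least on $\{f = 0\} = \partial D$; then inspect the proof of Theorem \ref{myq1} (which builds the augmented SDE for $\tilde X_t = ((X_t^x)^T, f(X_t^x))^T$ and invokes Theorem \ref{th4}) and note that Theorem \ref{th4} only needs (H5) to hold for the last component on the relevant set, as recorded in the Remark following Lemma \ref{le3}. Thus $c(\{f(B_t) = 0\}) = 0$, i.e., $c(\{B_t \in \partial D\}) = 0$, which is the claim.

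Alternatively, and perhaps more transparently for a short proof, I would dispense with $f$ entirely and argue directly: $\{B_t \in \partial D\} = \{|B_t - x_0|^2 = r^2\}$, and apply Theorem \ref{th4} (in the $B_t$ form of Remark \ref{my1}) to the process $Z_t := |B_t - x_0|^2$, which by the $G$-It\^o formula solves a one-dimensional $G$-SDE of the form \eqref{SDE1} whose diffusion coefficient is $2(B_t - x_0)$; one then localizes away from $\{B_t = x_0\}$ (which is polar by Lemma \ref{myq2}'s predecessor, Theorem \ref{th2} / the Remark after Corollary \ref{myq12}, giving $c(\{B_t = x_0\}) = 0$) so that (H5) holds for the relevant component on a neighborhood of $\{Z_t = r^2\}$. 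The main obstacle in either approach is the same — reconciling the global boundedness/nondegeneracy hypotheses with a quadratic $f$ — and it is resolved by the localization just described together with the robustness of Theorem \ref{th4} noted in the Remark after Lemma \ref{le3}.
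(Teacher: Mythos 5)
Your reduction to Theorem \ref{myq1} via a defining function for the sphere is the right instinct, but the step you use to repair the failure of the hypotheses is not valid. After your cutoff modification, $f$ equals a constant away from an annulus around $\partial D$, so $D_yf$ vanishes on a large open set and the lower bound $\delta\leq|\sum_i\partial_{y_i}f\,\sigma_i|^2$ fails there. You then assert that the proof of Theorem \ref{myq1} ``only uses the nondegeneracy of $f$ on the event $\{f(X_T^x)=0\}$'' and appeal to the Remark after Lemma \ref{le3}. That Remark only relaxes ``(H4)--(H5) hold for all $i$'' to ``for some $i$''; it does not localize the hypotheses in $x$. In the proof of Theorem \ref{myq1} the augmented diffusion $\tilde X_t=((X_t)^T,f(X_t))^T$ has last diffusion row $\partial_{x_i}f\,\sigma_i(t,x)$, a function of $x\in\mathbb{R}^n$, and Theorem \ref{th4} rests on Lemma \ref{le3}, where $\tilde u_m$ must be a viscosity supersolution at \emph{every} point of $[T-\varepsilon,T)\times\mathbb{R}^{n+1}$; the inequality there uses $G(-\sigma_i^T\sigma_i)\leq-\gamma\underline{\sigma}^2/2$, i.e. the lower bound $|\sigma_i(t,x)|^2\geq\gamma$ for all $x$, not just on the zero set of $f$. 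Since the coefficients do not depend on the augmented coordinate, the critical points of the comparison argument project onto all of $x$-space, so global nondegeneracy is genuinely needed and your modified $f$ does not qualify. The same problem defeats your alternative route via $Z_t=|B_t-x_0|^2$: its diffusion coefficient $2(B_s-x_0)$ is neither bounded nor bounded away from zero along the path, and the polarity of $\{B_t=x_0\}$ at the single terminal time $t$ says nothing about the behaviour of the coefficient on $[0,t]$; no stopped or localized version of Theorem \ref{th4} is available in the paper, and producing one is not a routine step.

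For comparison, the paper's proof avoids the degenerate-gradient issue altogether by writing each hemisphere as a graph: with $\bar x=(x_1,\ldots,x_{d-1})$ and $f(\bar x)=\sqrt{1-|\bar x|^2}I_{\{|\bar x|^2\leq1\}}$, it applies Theorem \ref{myq1} to $f^{\epsilon}(x)=x_d-J^{\epsilon}(\bar x)f(\bar x)$, where $J^{\epsilon}$ is a smooth cutoff vanishing near the equator. Because the $x_d$-derivative of $f^{\epsilon}$ is identically $1$, one has $|\nabla f^{\epsilon}|^2\geq1$ globally, so the hypotheses of Theorem \ref{myq1} hold everywhere; the price is that $\{f^{\epsilon}=0\}$ only captures the cap $\{|\tilde B_t|^2\leq1-2\epsilon\}$, which is then handled by letting $\epsilon\downarrow0$ and disposing of the equator with $c(\{B_t^d=0\})=0$ from Theorem \ref{th4}. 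If you want to salvage your argument, you should switch to such a graph-type defining function (gradient bounded below globally) rather than the radial one, whose gradient necessarily degenerates somewhere in $\mathbb{R}^d$.
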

\begin{proof}
Without loss of generality, we assume $D$ is the unit sphere.
Set $\bar{x}=(x_1,\ldots,x_{d-1})$ and
denote functions $$f(\bar{x}):=\sqrt{1-|\bar{x}|^2}I_{\{|\bar{x}|^2\leq 1\}}. $$
For each $\epsilon>0$, there exists  a nonnegative  function $J^{\epsilon}(\bar{x})\in C^{\infty}_0(\mathbb{R}^{d-1})$ such that
\begin{align*}
J^{\epsilon}(\bar{x})=\begin{cases}& 1, \ \text{if $|\bar{x}|\leq 1-2\epsilon$};
\\
&0,\ \text{if $|\bar{x}|\geq 1-\epsilon$}.
\end{cases}
\end{align*}
Then define function $f^\epsilon(x):=x_d-J^{\epsilon}(\bar{x})f(\bar{x}).$ It is easy to check that $J^{\epsilon}(\bar{x})f(\bar{x})\in C^{\infty}_0(\mathbb{R}^{d-1})$.
Moreover, $|\sum\limits_{i=1}^d\partial_{x_i}f^\epsilon(x)e_i|^2=\sum\limits_{i=1}^{d-1}|\partial_{x_i}f^\epsilon(x)|^2+1.$ Then applying Theorem \ref{myq1}, we obtain for each given $t\geq 0$,
\[
c(\{B^d_t-J^{\epsilon}(\tilde{B}_t)f(\tilde{B}_t)=0\})=0,
\]
where $\tilde{B}_t=(B^1_t,\ldots,B^{d-1}_t)$.
Consequently,
\[
c(\{B^d_t-f(\tilde{B}_t)=0\}\cap\{|\tilde{B}_t|^2\leq 1-2\epsilon\})=0.
\]
 Note that $\{B^d_t-f(\tilde{B}_t)=0\}\cap\{|\tilde{B}_t|^2\leq 1-2\epsilon\}$$\uparrow$$\{B^d_t-f(\tilde{B}_t)=0\}\cap \{|\tilde{B}_t|^2< 1\}$, then by taking $\epsilon\downarrow 0$ we get that
\[
c(\{B^d_t-f(\tilde{B}_t)=0\}\cap \{|\tilde{B}_t|^2< 1\})=0.
\]
From Theorem \ref{th4}, we get $c(\{B^d_t=0\})=0.$ Therefore, we deduce that
\[
c(\{B^d_t-f(\tilde{B}_t)=0\})\leq c(\{B^d_t-f(\tilde{B}_t)=0\}\cap \{|\tilde{B}_t|^2< 1\})+c(\{B^d_t=0\})=0.
\]
By a similar analysis, we also get $c(\{B^d_t+f(\tilde{B}_t)=0\})=0$. Thus
\[
c(\{B_t\in\partial D\})\leq c(\{B^d_t-f(\tilde{B}_t)=0\})+c(\{B^d_t+f(\tilde{B}_t)=0\})=0,
\]
which is the desired result.
\end{proof}

The following result is a direct consequence of Theorem \ref{le6}, Lemmas \ref{myq5} and \ref{myq2}.
\begin{theorem}
\label{myq3}
Suppose $A_{i}$ is a $d$-dimensional  sphere or $[a^{i},b^{i}]$ with $a^{i}$, $b^{i}\in \mathbb{R}^{d}$,
$a^{i}\leq b^{i}$ for $i\geq 1$. If   $D$ is in  $\mathcal{B}(\mathbb{R}^{d})$ with
$\partial D\subset \cup_{i=1}^{\infty}\partial A_{i}$ , then $I_{\{B_{t}%
\in  D\}}\in L_{G}^{1}(\Omega_{t})$ for any $t>0$.
\end{theorem}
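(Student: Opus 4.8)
The plan is to reduce Theorem \ref{myq3} entirely to facts already established in this section, so almost no new work is required. The statement asks us to show $I_{\{B_t\in D\}}\in L_G^1(\Omega_t)$ whenever $D\in\mathcal{B}(\mathbb{R}^d)$ has boundary contained in a countable union $\bigcup_{i\geq 1}\partial A_i$, where each $A_i$ is either a $d$-dimensional sphere or a box $[a^i,b^i]$. The natural route is: (i) apply Theorem \ref{le6} with $\xi=B_t$ and $k=d$, which says $I_{\{B_t\in D\}}\in L_G^1(\Omega)$ as soon as $c(\{B_t\in\partial D\})=0$; then upgrade membership from $L_G^1(\Omega)$ to $L_G^1(\Omega_t)$ using that $I_{\{B_t\in D\}}$ is $\mathcal{B}(\Omega_t)$-measurable (so it lies in $L_G^1(\Omega)\cap L^0(\Omega_t)=L_G^1(\Omega_t)$, invoking the characterization in Theorem \ref{new-qua}, whose quasi-continuous-version description localizes to $\Omega_t$). (ii) reduce $c(\{B_t\in\partial D\})=0$ to the capacity of the individual boundaries via countable subadditivity of $c$.

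So the heart of the argument is the observation $\partial D\subset\bigcup_{i=1}^\infty\partial A_i$, hence
\begin{equation*}
\{B_t\in\partial D\}\subset\bigcup_{i=1}^\infty\{B_t\in\partial A_i\},
\end{equation*}
and therefore, by subadditivity of the capacity $c$,
\begin{equation*}
c(\{B_t\in\partial D\})\leq\sum_{i=1}^\infty c(\{B_t\in\partial A_i\}).
\end{equation*}
It then remains to show that every term on the right vanishes. For those $i$ with $A_i=[a^i,b^i]$ a box in $\mathbb{R}^d$, we apply Lemma \ref{co5} with $n=d$, $x=0$, $b=h^{jk}=0$, $\sigma=I_{d\times d}$ (as in Remark \ref{my1}, this choice gives $X_t^x=B_t$), which yields $c(\{B_t\in\partial A_i\})=0$. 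For those $i$ with $A_i$ a $d$-dimensional sphere, we apply Lemma \ref{myq2}, which directly gives $c(\{B_t\in\partial A_i\})=0$ (the lemma is stated for the unit sphere, but the general case follows by translation and scaling of $G$-Brownian motion, or simply by re-running the proof of Lemma \ref{myq2} with an affine change of the function $f$). Combining, $c(\{B_t\in\partial D\})=0$.

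With $c(\{B_t\in\partial D\})=0$ in hand, Theorem \ref{le6} (applied to $\xi=B_t\in L_G^1(\Omega;\mathbb{R}^d)$, $A=D$) gives $I_{\{B_t\in D\}}\in L_G^1(\Omega)$, i.e. $I_{\{B_t\in D\}}$ has a quasi-continuous version and is bounded; since it is moreover $\mathcal{F}_t=\mathcal{B}(\Omega_t)$-measurable, it belongs to $L_G^1(\Omega_t)$. I do not anticipate a genuine obstacle here: the only point requiring a word of care is making sure the spheres of arbitrary center and radius are covered — Lemma \ref{myq2} as written handles the unit sphere, and one should note explicitly that the affine invariance of the class of $G$-SDEs of the form \eqref{SDE1} (equivalently, a change of variables in the auxiliary function of Theorem \ref{myq1}) reduces the general sphere to that case. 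Everything else is a direct citation of Lemmas \ref{co5} and \ref{myq2} and Theorem \ref{le6}, which is exactly why the theorem is billed as "a direct consequence."
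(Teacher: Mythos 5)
Your proposal is correct and follows exactly the route the paper intends: the paper presents Theorem \ref{myq3} as a direct consequence of Lemmas \ref{co5} and \ref{myq2} (capacity zero of box and sphere boundaries, combined via countable subadditivity) together with Theorem \ref{le6}, which is precisely your argument. Your added remarks on reducing a general sphere to the unit sphere and on $\mathcal{B}(\Omega_t)$-measurability giving membership in $L_G^1(\Omega_t)$ are points the paper leaves implicit, and they are handled correctly.
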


\begin{example}{\upshape
Assume $d=1$. Given a function $u\in C_{b,Lip}(\mathbb{R})$. Then  for each given $n\in \mathbb{N}$, we take
\begin{equation*}
{h}_{i}^{n}(x)={\bf 1}_{[-n+\frac{i}{n},-n+\frac{i+1}{n})}(x),i=0,\ldots,2n^{2}-1,\
{h}_{2n^{2}}^{n}=1-\sum_{i=0}^{2n^{2}-1}h_{i}^{n}.
\end{equation*}
We denote $u^n(B_t):=\sum\limits_{i=0}^{2n^2}u(-n+\frac{i}{n}){h}_{i}^{n}(B_t)$.
Then by Theorem \ref{myq3} and  a direct calculation, we conclude $u^n(B_t)\in L^1_G(\Omega_{t})$ and
\[
\lim\limits_{n\mapsto\infty}\mathbb{\hat{E}}[|u^n(B_t)-u(B_t)|]=0,
\]
which can be seen  as a counterpart of the  approximation of  function in the nonlinear expectation  theory. In particular, it provides a method to construct the approximation of an  admissible control under the $G$-expectation framework, more details can be founded in \cite{HJ2}.
}
\end{example}

\section{Quasi-continuous processes}
In this section,  we shall study the integrable processes under the $G$-expectation framework.
First, we  consider the characterization of $M^p_G(0,T)$.
Then we apply Krylov's estimates to get  some quasi-continuous processes.

\subsection{Characterization of $M^p_G(0,T)$}
We shall give a characterization of the space $M_{G}^{p}(0,T)$ for each $T>0$
and $p\geq1$, which generalizes the results in \cite{DHP11}.

Set $\mathcal{F}%
_{t}=\mathcal{B}(\Omega_{t})$ for $t\in \lbrack0,T]$ and the distance%
\[
\rho((t,\omega),(t^{\prime},\omega^{\prime}))=|t-t^{\prime}|+\max_{s\in
\lbrack0,T]}|\omega_{s}-\omega_{s}^{\prime}|,\ \text{ for }(t,\omega),(t^{\prime
},\omega^{\prime})\in \lbrack0,T]\times \Omega_{T}.
\]
Define, for each $p\geq1$,%
\[
\mathbb{M}^{p}(0,T)=\{ \eta:\text{progressively measurable on }[0,T]\times
\Omega_{T}\text{ and }\mathbb{\hat{E}}[\int_{0}^{T}|\eta_{t}|^{p}dt]<\infty \}
\]
and the corresponding capacity%
\[
\hat{c}(A)=\frac{1}{T}\mathbb{\hat{E}}[\int_{0}^{T}I_{A}(t,\omega)dt],\text{
for each progressively measurable set }A\subset \lbrack0,T]\times \Omega_{T}.
\]

\begin{proposition}
\label{nee-pro1}Let $A$ be a progressively measurable set in $[0,T]\times
\Omega_{T}$. Then $I_{A}=0$ $\hat{c}$-q.s. if and only if
$\int_{0}^{T}I_{A}(t,\cdot)dt=0$ $c$-q.s..
\end{proposition}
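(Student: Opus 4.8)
The plan is to prove the two implications separately, noting that the nontrivial direction is "$\int_0^T I_A(t,\cdot)\,dt = 0$ $c$-q.s.\ $\Rightarrow$ $I_A = 0$ $\hat c$-q.s." The reverse implication is essentially immediate: if $I_A = 0$ $\hat c$-q.s., then by definition of $\hat c$ we have $\hat c(A) = \frac{1}{T}\mathbb{\hat E}[\int_0^T I_A(t,\omega)\,dt] = 0$, so $\mathbb{\hat E}[\int_0^T I_A(t,\cdot)\,dt] = 0$, and since the nonnegative random variable $\int_0^T I_A(t,\cdot)\,dt$ has zero $G$-expectation, it must vanish $c$-q.s.; here I would invoke the representation $\mathbb{\hat E}[\cdot] = \sup_{P\in\mathcal P} E_P[\cdot]$ (Theorem \ref{the2.7}), which gives $E_P[\int_0^T I_A\,dt] = 0$ for every $P\in\mathcal P$, hence $\int_0^T I_A(t,\cdot)\,dt = 0$ $P$-a.s.\ for each $P$, i.e.\ $c$-q.s.

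For the forward direction, suppose $N := \{\omega : \int_0^T I_A(t,\omega)\,dt > 0\}$ is polar, i.e.\ $c(N) = 0$. I want to show $\hat c(A) = 0$, equivalently $\mathbb{\hat E}[\int_0^T I_A(t,\omega)\,dt] = 0$. But $\int_0^T I_A(t,\omega)\,dt = 0$ for every $\omega \notin N$, so the random variable $Z := \int_0^T I_A(t,\cdot)\,dt$ satisfies $Z = 0$ q.s. Again using the representation theorem, for each $P\in\mathcal P$ we have $P(N) = 0$, hence $Z = 0$ $P$-a.s., so $E_P[Z] = 0$; taking the supremum over $P\in\mathcal P$ gives $\mathbb{\hat E}[Z] = 0$, i.e.\ $\hat c(A) = 0$, which is exactly $I_A = 0$ $\hat c$-q.s.

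In both directions the key mechanism is the same: translate the $G$-capacity statements into statements about the family $\mathcal P$ via Theorem \ref{the2.7}, then use the ordinary Fubini theorem under each fixed classical measure $P$ together with the fact that a nonnegative measurable function with zero Lebesgue-time-integral vanishes a.e.\ in $t$, and conversely. The only mild technical point to check is measurability: $\int_0^T I_A(t,\omega)\,dt$ is a well-defined $\mathcal B(\Omega_T)$-measurable (indeed $\mathcal F_T$-measurable) random variable because $A$ is progressively measurable, so $I_A$ is jointly measurable on $[0,T]\times\Omega_T$ and Fubini applies under each $P\in\mathcal P$. I expect the main (though still minor) obstacle to be merely making this measurability and the application of Fubini under each $P$ precise; there is no genuine PDE or hard analytic content here, the statement being a soft consequence of the representation theorem.
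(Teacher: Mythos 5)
Your proof is correct and is essentially the paper's own argument spelled out: the paper simply notes that $\mathbb{\hat{E}}[\int_0^T I_A(t,\cdot)\,dt]=0$ if and only if $c(\{\int_0^T I_A(t,\cdot)\,dt>0\})=0$, which is exactly what you verify by passing through the family $\mathcal{P}$. The only caveat is that Theorem \ref{the2.7} is stated for elements of $L_G^1(\Omega)$, while $\int_0^T I_A(t,\cdot)\,dt$ is merely Borel measurable, so strictly speaking you should read $\mathbb{\hat{E}}$ on such integrands as the extended upper expectation $\sup_{P\in\mathcal{P}}E_P[\cdot]$ (which is how $\hat{c}$ and $\mathbb{M}^p(0,T)$ are defined) — a definitional point, not a gap.
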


\begin{proof}
It is obvious $\int_{0}^{T}I_{A}(t,\cdot)dt\geq0$. Thus we can
easily get $\mathbb{\hat{E}}[\int_{0}^{T}I_{A}(t,\omega)dt]=0$ if
and only if $c(\{ \int_{0}^{T}I_{A}(t,\cdot)dt>0\})=0$, which
completes the proof.
\end{proof}

In the following, we do not distinguish  the progressively measurable
process $\eta$ from $\eta^{\prime}$ if $\hat{c}(\{ \eta \not
=\eta^{\prime}\})=0$.

\begin{proposition}
For each $p\geq1$, $\mathbb{M}^{p}(0,T)$ is a Banach space under the norm
$||\eta||_{\mathbb{M}^{p}}:=(\mathbb{\hat{E}}[\int_{0}^{T}|\eta_{t}%
|^{p}dt])^{1/p}$.
\end{proposition}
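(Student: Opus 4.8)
The plan is to verify the two defining properties of a Banach space: that $||\cdot||_{\mathbb{M}^p}$ is a genuine norm on $\mathbb{M}^p(0,T)$, and that the space is complete under it.

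First I would check the norm axioms. Positive homogeneity and the triangle inequality follow immediately from the sublinearity (positive homogeneity and subadditivity) of $\mathbb{\hat{E}}$ together with the Minkowski inequality for the $L^p$-norm in $t$; more precisely, for $\eta,\eta'\in\mathbb{M}^p(0,T)$ one writes $|\eta_t+\eta'_t|^p$, takes the $L^p(dt)$-triangle inequality inside, and then uses $\mathbb{\hat{E}}[X+Y]\leq\mathbb{\hat{E}}[X]+\mathbb{\hat{E}}[Y]$. The only delicate point is definiteness: $||\eta||_{\mathbb{M}^p}=0$ must imply $\eta=0$ in the space, i.e. $\hat{c}(\{\eta\neq 0\})=0$. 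But $||\eta||_{\mathbb{M}^p}^p=\mathbb{\hat{E}}[\int_0^T|\eta_t|^p\,dt]=0$ means exactly $\hat{c}(\{\eta\neq 0\})=0$ by the definition of $\hat{c}$ (taking $A=\{(t,\omega):\eta_t(\omega)\neq 0\}$, which is progressively measurable), so by the convention stated just before the proposition, $\eta$ is identified with $0$. Thus $||\cdot||_{\mathbb{M}^p}$ is a norm on the quotient space.

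For completeness, I would take a Cauchy sequence $(\eta^{(k)})_{k\geq1}$ in $\mathbb{M}^p(0,T)$ and extract a subsequence $(\eta^{(k_j)})$ with $||\eta^{(k_{j+1})}-\eta^{(k_j)}||_{\mathbb{M}^p}\leq 2^{-j}$. Setting $S_J=\sum_{j=1}^{J}|\eta^{(k_{j+1})}-\eta^{(k_j)}|$ as a progressively measurable process, the monotone limit $S_\infty=\lim_J S_J$ satisfies $\mathbb{\hat{E}}[\int_0^T S_\infty^p\,dt]<\infty$; here I would invoke the representation $\mathbb{\hat{E}}[\cdot]=\sup_{P\in\mathcal{P}}E_P[\cdot]$ (Theorem \ref{the2.7}) so that $\int_0^T S_J^p\,dt\uparrow\int_0^T S_\infty^p\,dt$ under each $P$ and hence, taking suprema, the bound passes to the limit via monotone convergence $P$-wise and then a supremum over $\mathcal{P}$. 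This gives $\int_0^T S_\infty(t,\omega)^p\,dt<\infty$ for $\hat{c}$-q.s. $(t,\omega)$ — more carefully, $c$-q.s. $\omega$ — so the series $\sum_j(\eta^{(k_{j+1})}-\eta^{(k_j)})$ converges absolutely for $dt\times c$-almost every $(t,\omega)$. Define $\eta$ as this limit (and $0$ on the exceptional set); it is progressively measurable as a q.s. limit of progressively measurable processes. Then $|\eta^{(k_j)}-\eta|^p\leq (2S_\infty)^p$, and the $G$-dominated convergence theorem (Theorem \ref{new-qua1}, applied to the decreasing envelopes $\sup_{l\geq j}|\eta^{(k_l)}-\eta|^p$ which lie in $L^1_G$ after integration in $t$) yields $||\eta^{(k_j)}-\eta||_{\mathbb{M}^p}\to 0$; finally the Cauchy property upgrades subsequential convergence to convergence of the full sequence, and $\eta\in\mathbb{M}^p(0,T)$ since $||\eta||_{\mathbb{M}^p}\leq||\eta-\eta^{(k_j)}||_{\mathbb{M}^p}+||\eta^{(k_j)}||_{\mathbb{M}^p}<\infty$.

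The main obstacle I anticipate is the interchange of limits against the sublinear expectation $\mathbb{\hat{E}}$ — there is no classical monotone convergence theorem available, so one must be careful to route every such step either through the representation $\sup_{P\in\mathcal{P}}E_P$ (where ordinary measure-theoretic convergence theorems apply $P$-by-$P$) or through Theorem \ref{new-qua1} for decreasing sequences in $L^1_G$. Establishing that the relevant dominating processes actually lie in the right spaces (so that Theorem \ref{new-qua1} is applicable, not just formally invoked) and that the exceptional null sets for $\hat{c}$ and for $c$ match up correctly via Proposition \ref{nee-pro1} is where the care is needed; everything else is the standard Riesz–Fischer argument transcribed into the $G$-framework.
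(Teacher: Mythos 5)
The paper itself omits this proof as ``similar to the classical results,'' and your overall route --- checking the norm axioms via Minkowski plus sublinearity of $\mathbb{\hat{E}}$, and running the Riesz--Fischer completeness argument with all monotone-limit steps pushed through the representation $\mathbb{\hat{E}}[\cdot]=\sup_{P\in\mathcal{P}}E_{P}[\cdot]$ --- is exactly the intended standard argument. The definiteness point, the construction of the limit process $\eta$ from the telescoping sums $S_{J}$ (finiteness of $\int_{0}^{T}S_{\infty}^{p}dt$ $c$-q.s., progressive measurability of the limit), and the passage from subsequential to full convergence are all fine.

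The one step that does not work as written is your justification of $\Vert\eta^{(k_{j})}-\eta\Vert_{\mathbb{M}^{p}}\rightarrow0$ via Theorem \ref{new-qua1}: that theorem requires the decreasing random variables to lie in $L_{G}^{1}(\Omega)$, and $\int_{0}^{T}\sup_{l\geq j}|\eta_{t}^{(k_{l})}-\eta_{t}|^{p}dt$ has no reason to belong to $L_{G}^{1}(\Omega)$ --- membership there forces the existence of a quasi-continuous version (Theorem \ref{new-qua}), and the whole point of $\mathbb{M}^{p}(0,T)$ is that its elements need not have one (see the example following Theorem \ref{nee-the1}). Nor does ``dominated convergence $P$-by-$P$'' alone suffice: each $E_{P}[\int_{0}^{T}|\eta_{t}^{(k_{j})}-\eta_{t}|^{p}dt]$ tends to $0$, but a supremum over $\mathcal{P}$ of quantities tending to zero need not tend to zero without a bound uniform in $P$. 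The repair is short and already implicit in your setup: q.s. one has $|\eta^{(k_{j})}-\eta|\leq T_{j}:=\sum_{l\geq j}|\eta^{(k_{l+1})}-\eta^{(k_{l})}|$, and by Minkowski together with monotone convergence under each fixed $P$ (then taking the supremum over $\mathcal{P}$) one gets $\Vert T_{j}\Vert_{\mathbb{M}^{p}}\leq\sum_{l\geq j}2^{-l}$, a bound uniform in $P$; hence $\Vert\eta^{(k_{j})}-\eta\Vert_{\mathbb{M}^{p}}\leq2^{-j+1}\rightarrow0$ (equivalently, apply Fatou under each $P$ against the Cauchy bound $\sup_{l\geq j}\Vert\eta^{(k_{j})}-\eta^{(k_{l})}\Vert_{\mathbb{M}^{p}}$). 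With that substitution your proof is complete and matches the classical argument the paper has in mind.
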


\begin{proof}
The proof is the same as the classical case and we omit it.
\end{proof}

It is clear that $M_{G}^{0}(0,T)\subset \mathbb{M}^{p}(0,T)$ for any $p\geq1$.
Thus $M_{G}^{p}(0,T)$ is a closed subspace of $\mathbb{M}^{p}(0,T)$. Also we
set%
\[
M_{c}(0,T)=\{ \text{all adapted processes }\eta \text{ in }C_{b}([0,T]\times
\Omega_{T})\}.
\]

\begin{proposition}
\label{com} For each $p\geq1$, the completion of $M_{c}(0,T)$ under the norm
$||\cdot||_{\mathbb{M}^{p}}$ is $M_{G}^{p}(0,T)$.
\end{proposition}

\begin{proof}
We first prove that the completion of $M_{c}(0,T)$ under the norm
$||\cdot||_{\mathbb{M}^{p}}$  is included in $M_{G}^{p}(0,T)$. For each fixed
$\eta \in M_{c}(0,T)$, we set $$\eta_{t}^{k}(\cdot)=\sum_{i=0}^{k-1}\eta_{(iT)/k}
(\cdot)I_{[\frac{iT}{k},\frac{(i+1)T}{k})}(t).$$ By Theorem \ref{new-qua}, it is easy to
verify that $\eta^{k}\in M_{G}^{p}(0,T)$. For each $\varepsilon>0$, since
$\mathcal{P}$ is weakly compact, there exists a compact set $K\subset
\Omega_{T}$ such that $\mathbb{\hat{E}}[I_{K^{c}}]\leq \varepsilon$. Thus%
\begin{align*}
\mathbb{\hat{E}}[\int_{0}^{T}|\eta_{t}-\eta_{t}^{k}|^{p}dt]  &  \leq
\mathbb{\hat{E}}[I_{K}\int_{0}^{T}|\eta_{t}-\eta_{t}^{k}|^{p}dt]+\mathbb{\hat
{E}}[I_{K^{c}}\int_{0}^{T}|\eta_{t}-\eta_{t}^{k}|^{p}dt]\\
&  \leq \sup_{(t,\omega)\in \lbrack0,T]\times K}T|\eta_{t}(\omega)-\eta_{t}%
^{k}(\omega)|^{p}+(2l)^{p}T\varepsilon,
\end{align*}
where $l$ is the upper bound of $\eta$. Noting that $[0,T]\times K$ is compact
and $\eta \in C_{b}([0,T]\times \Omega_{T})$, thus
\[
\underset{k\rightarrow \infty}{\limsup}\mathbb{\hat{E}}[\int_{0}^{T}|\eta
_{t}-\eta_{t}^{k}|^{p}dt]\leq(2l)^{p}T\varepsilon.
\]
Since $\varepsilon$ is arbitrary, we get $||\eta^{k}-\eta||_{\mathbb{M}^{p}}\rightarrow0$ as
$k\rightarrow \infty$. Thus $\eta \in M_{G}^{p}(0,T)$, which implies the desired result.

Now we prove the converse part. For each given $\bar{\eta}_{t}=\sum
_{i=0}^{N-1}\xi_{i}I_{[t_{i},t_{i+1})}(t)\in M_{G}^{0}(0,T)$, we can
find $\{ \phi_{k}^{i}:k\geq1\} \subset C([0,\infty))$, $i<N$, $k\geq 1$ so
that $supp(\phi_{k}^{i})\subset (t_{i},t_{i+1})$ and $\int_{0}^{T}|\phi_{k}^{i}(t)-I_{[t_{i},t_{i+1})}(t)|^{p}dt\rightarrow0$
as $k\rightarrow \infty$. Set $\bar{\eta}_{t}^{k}=\sum_{i=0}^{N-1}\xi_{i}%
\phi_{k}^{i}(t)$, it is easy to check that $\bar{\eta}^{k}\in M_{c}(0,T)$ and
$||\bar{\eta}^{k}-\bar{\eta}||_{\mathbb{M}^{p}}\rightarrow0$ as $k\rightarrow
\infty$. Thus each element of
$M_{G}^{p}(0,T)$ belongs to the completion of $M_{c}(0,T)$ under
the norm $||\cdot||_{\mathbb{M}^{p}}$, which completes the proof.
\end{proof}

\begin{definition}
A progressively measurable process $\eta:[0,T]\times \Omega_{T}\rightarrow
\mathbb{R}$ is called quasi-continuous (q.c.), if for each $\varepsilon>0$,
there exists a progressively measurable  open set $G$ in $[0,T]\times \Omega_{T}$ such that
$\hat{c}(G)<\varepsilon$ and $\eta|_{G^{c}}$ is continuous.
\end{definition}

\begin{remark}{\upshape
Our definition of quasi-continuous  process is different from
the ones in \cite{Song11, Song12}.}
\end{remark}

\begin{definition}
We say that a progressively measurable process $\eta:[0,T]\times \Omega
_{T}\rightarrow \mathbb{R}$ has a quasi-continuous version if there exists a
quasi-continuous process $\eta^{\prime}$ such that $\hat{c}(\{ \eta \not =%
\eta^{\prime}\})=0$.
\end{definition}

\begin{theorem}
\label{quasi-c}For each $p\geq1$,%
\begin{align*}
M_{G}^{p}(0,T)=\{\eta \in \mathbb{M}^{p}(0,T)\ :\ \
&\lim_{N\rightarrow
\infty}\mathbb{\hat{E}}[\int_{0}^{T}|\eta_{t}|^{p}I_{\{|\eta_{t}|\geq
N\}}dt]=0 \
 \text{and}\\ & \eta \text{ has a quasi-continuous
version}\}.
 \end{align*}
\end{theorem}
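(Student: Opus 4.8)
The plan is to prove the two inclusions separately, following the pattern of Theorem~\ref{new-qua} (the random-variable version) but working on the product space $[0,T]\times\Omega_T$ with the capacity $\hat{c}$, and using Proposition~\ref{com} (the completion of $M_c(0,T)$ under $\|\cdot\|_{\mathbb{M}^p}$ is $M_G^p(0,T)$) as the bridge between the two sides.

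\textbf{Step 1 ($\subset$).} Let $\eta\in M_G^p(0,T)$. For the tail condition, take $\eta^k\in M_G^0(0,T)$ with $\|\eta^k-\eta\|_{\mathbb{M}^p}\to0$; since each $\eta^k$ is bounded, the uniform integrability estimate $\mathbb{\hat E}[\int_0^T|\eta_t|^pI_{\{|\eta_t|\ge N\}}dt]\le c_p(\|\eta-\eta^k\|_{\mathbb M^p}^p+\mathbb{\hat E}[\int_0^T|\eta^k_t|^pI_{\{|\eta_t|\ge N\}}dt])$ together with $\hat c(\{|\eta_t|\ge N\})\le N^{-p}\|\eta\|^p_{\mathbb M^p}\to0$ gives $\lim_N\mathbb{\hat E}[\int_0^T|\eta_t|^pI_{\{|\eta_t|\ge N\}}dt]=0$ by a standard $\varepsilon/3$ argument. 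For the quasi-continuous version, use Proposition~\ref{com}: choose $\zeta^k\in M_c(0,T)$ with $\|\zeta^k-\eta\|_{\mathbb M^p}\to0$, pass to a subsequence so that $\sum_k\|\zeta^{k+1}-\zeta^k\|_{\mathbb M^p}<\infty$, hence $\sum_k\mathbb{\hat E}[\int_0^T|\zeta^{k+1}_t-\zeta^k_t|^pdt]<\infty$; by Proposition~\ref{nee-pro1} and a Borel–Cantelli/Markov argument in the capacity $\hat c$, the series $\sum_k|\zeta^{k+1}_t(\omega)-\zeta^k_t(\omega)|$ is finite $\hat c$-q.s., so $\zeta^k$ converges $\hat c$-q.s. and in $\|\cdot\|_{\mathbb M^p}$ to a limit $\eta'$ which equals $\eta$ $\hat c$-q.s. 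Each $\zeta^k$ is continuous on $[0,T]\times\Omega_T$, and removing, for each $m$, a progressive open set of small $\hat c$-capacity outside which the convergence $\zeta^k\to\eta'$ is uniform (Egorov-type argument available because $\hat c$ is built from a weakly compact $\mathcal P$ and the closed-set continuity property) shows $\eta'$ is quasi-continuous in the sense of the new definition.

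\textbf{Step 2 ($\supset$).} Conversely, let $\eta\in\mathbb M^p(0,T)$ satisfy the tail condition and admit a quasi-continuous version $\eta'$; we may assume $\eta=\eta'$. First truncate: $\eta^N:=(\eta\vee(-N))\wedge N$ satisfies $\|\eta^N-\eta\|_{\mathbb M^p}\to0$ by the tail condition, so it suffices to approximate bounded quasi-continuous $\eta$. Given $\varepsilon>0$, pick a progressive open $G$ with $\hat c(G)<\varepsilon$ and $\eta|_{G^c}$ continuous; extend $\eta|_{G^c}$ to a bounded adapted continuous process $\bar\eta\in M_c(0,T)$ on all of $[0,T]\times\Omega_T$ (Tietze-type extension respecting the filtration, using that the distance $\rho((t,\omega),(t',\omega'))$ makes $[0,T]\times\Omega_T$ a metric space and truncating at the bound of $\eta$). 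Then $\|\eta-\bar\eta\|_{\mathbb M^p}^p\le\mathbb{\hat E}[\int_0^T|\eta_t-\bar\eta_t|^pI_G(t,\omega)dt]\le(2\|\eta\|_\infty)^p T\hat c(G)<(2\|\eta\|_\infty)^pT\varepsilon$, so $\eta$ lies in the $\|\cdot\|_{\mathbb M^p}$-closure of $M_c(0,T)$, which is $M_G^p(0,T)$ by Proposition~\ref{com}.

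\textbf{Main obstacle.} The delicate points are the two "extension'' arguments: producing, in Step~1, the Egorov-type uniformly-small progressive open set (one must check that the continuity-of-capacity property $\hat c(F_n)\downarrow\hat c(F)$ for closed $F_n\downarrow F$ transfers from $c$ to $\hat c$ via Proposition~\ref{nee-pro1}, which is what licenses the quasi-continuity upgrade), and, in Step~2, the adapted Tietze extension of $\eta|_{G^c}$ to an element of $M_c(0,T)$ — one must ensure the extension can be taken progressively measurable and bounded by the same constant. Both are technical rather than conceptual; the rest is the familiar $\varepsilon/3$ bookkeeping already used in the proof of Theorem~\ref{new-qua}.
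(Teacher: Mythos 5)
Your Step 1 is in line with what the paper does: for the inclusion $M_{G}^{p}(0,T)\subset J_{p}$ the paper simply invokes the completion result (Proposition \ref{com}) and "the same analysis as in Propositions 18 and 24 in \cite{DHP11}", which is exactly the fast-subsequence/Borel--Cantelli and Egorov-type argument you sketch; the only point to watch is that the exceptional sets you remove are progressive open sets, which does hold for limsup-type sets built from processes in $M_{c}(0,T)$.

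The gap is in Step 2, at the sentence "extend $\eta|_{G^{c}}$ to a bounded adapted continuous process $\bar\eta\in M_{c}(0,T)$ (Tietze-type extension respecting the filtration)". Tietze's theorem gives you a bounded continuous extension of $\eta^{N}|_{G_{\varepsilon}^{c}}$ to $[0,T]\times\Omega_{T}$, but it gives no control over measurability in $\omega$: the extended function at $(t,\omega)$ in general depends on the whole path $\omega$ up to time $T$, so it need not be adapted, and there is no straightforward way to force the extension to be adapted while keeping joint continuity. This is not a routine technicality --- it is precisely the main difficulty of this direction, and the paper's proof is organized around avoiding it. Concretely, the paper first extends $\eta^{N}$ from the closed sets $F^{i,k}=G_{\varepsilon}^{c}\cap([t_{i}^{k},t_{i+1}^{k}]\times\Omega_{T})$ by Tietze extensions $\zeta^{N,i,k}$ that are only $\mathcal{B}([0,t_{i+1}^{k}])\times\mathcal{B}(\Omega_{t_{i+1}^{k}})$-measurable (so still not adapted on $[t_{i}^{k},t_{i+1}^{k})$), and then restores adaptedness by the time-shift trick $\bar\eta_{t}^{N,k}(\omega)=\tilde\eta^{N,k}(t-\frac{T}{k},\omega)I_{[t_{1}^{k},T)}(t)$: after the delay, the value at time $t$ only involves $\omega_{\cdot\wedge t_{i+1}^{k}}$ with $t_{i+1}^{k}\leq t$. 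The error introduced by the delay is then controlled via the weak compactness of $\mathcal{P}$ (a compact $K_{\varepsilon}$ with $\mathbb{\hat E}[I_{K_{\varepsilon}^{c}}]\leq\varepsilon$) and uniform continuity of the global Tietze extension $\tilde\eta^{N,\varepsilon}$ on $[0,T]\times K_{\varepsilon}$. Your $\varepsilon$-bookkeeping $\|\eta^{N}-\bar\eta\|_{\mathbb{M}^{p}}^{p}\leq(2N)^{p}T\hat c(G_{\varepsilon})$ is fine once such an adapted approximant exists, but as written your proposal assumes away the step that actually requires the new idea; to close the argument you need either the paper's partition-and-delay construction (or an equivalent device producing an adapted element of $M_{c}(0,T)$, respectively of $M_{G}^{p}(0,T)$, close to $\eta^{N}$).
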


\begin{proof}
We denote%
\begin{align*}
J_{p}=\{\eta \in \mathbb{M}^{p}(0,T)\ :\ \ &\lim_{N\rightarrow
\infty}\mathbb{\hat{E}}[\int_{0}^{T}|\eta_{t}|^{p}I_{\{|\eta_{t}|\geq
N\}}dt]=0 \
 \text{and}\\ & \eta \text{ has a quasi-continuous
version}\}.
 \end{align*}
Noting that the completion of $M_{c}(0,T)$ under the norm $||\cdot
||_{\mathbb{M}^{p}}$ is $M_{G}^{p}(0,T)$, then, by the same analysis
as in Propositions 18 and 24 in \cite{DHP11}, we can get
$M_{G}^{p}(0,T)\subset J_{p}$.

On the other hand, for each $\eta \in J_{p}$, we need to prove that $\eta \in
M_{G}^{p}(0,T)$. Without loss of generality, we assume that $\eta$ is quasi-continuous. For each
$N>0$, set $\eta^{N}=(\eta \wedge N)\vee(-N)$, since $\mathbb{\hat{E}}[\int
_{0}^{T}|\eta_{t}-\eta_{t}^{N}|^{p}dt]\leq \mathbb{\hat{E}}[\int_{0}^{T}%
|\eta_{t}|^{p}I_{\{|\eta_{t}|\geq N\}}dt]\rightarrow0$ as $N\rightarrow \infty
$, it suffices to show that $\eta^{N}\in M_{G}^{p}(0,T)$ for each fixed
$N>0$. For each $\varepsilon>0$, there exists a compact set $K_{\varepsilon
}\subset \Omega_{T}$ such that $\mathbb{\hat{E}}[I_{K_{\varepsilon}^{c}}%
]\leq \varepsilon$ and a progressively measurable open set $G_{\varepsilon}\subset
\lbrack0,T]\times \Omega_{T}$ such that $\hat{c}(G_{\varepsilon})<\varepsilon$ and $\eta^{N}|_{G_{\varepsilon}^{c}}$ is continuous. By Tietze's extension
theorem, there exists a function $\tilde{\eta}^{N,\varepsilon}\in C_{b}([0,T]\times
\Omega_{T})$ such that $|\tilde{\eta}^{N,\varepsilon}|\leq N$ and $\tilde
{\eta}^{N,\varepsilon}|_{G_{\varepsilon}^{c}}=\eta^{N}|_{G_{\varepsilon}^{c}}%
$. For each $k\geq1$, we set $F^{i,k}=G_{\varepsilon}^{c}\cap([t_{i}^{k},t_{i+1}^{k}]\times \Omega_{T})$
for $i\leq k-1$, where $t_{i}^{k}=\frac{iT}{k}$ for $i=0,\ldots,k$. Since $G_{\varepsilon}^{c}$ is progressively measurable, we can get
$F^{i,k}\in \mathcal{B}([0,t_{i+1}^{k}])\times \mathcal{B}(\Omega_{t_{i+1}^{k}%
})$. Since $F^{i,k}$ is closed,  again  by Tietze's extension
theorem, there exists a function $\zeta^{N,i,k}\in C_{b}([0,t_{i+1}^{k}]\times
\Omega_{T})$ such that $\zeta^{N,i,k}\in \mathcal{B}([0,t_{i+1}^{k}%
])\times \mathcal{B}(\Omega_{t_{i+1}^{k}})$, $|\zeta^{N,i,k}|\leq N$
and $\zeta^{N,i,k}|_{F^{i,k}}=\eta^{N}|_{F^{i,k}}$. We denote
$\tilde{\eta}_{t}^{N,k}(\omega)=\sum_{i=0}^{k-1}\zeta^{N,i,k}(t,\omega
)I_{[t_{i}^{k},t_{i+1}^{k})}(t)$ and
\[
\bar{\eta}_{t}^{N,k}(\omega)=\tilde{\eta
}^{N,k}(t-\frac{T}{k},\omega)I_{[t_{1}^{k},T)}(t),\ \bar{\eta}_{t}%
^{N,\varepsilon,k}(\omega)=\tilde{\eta}^{N,\varepsilon}(t-\frac{T}{k}%
,\omega)I_{[t_{1}^{k},T)}(t).
\]
A similar analysis as in Proposition \ref{com} implies that $\bar{\eta
}^{N,k}\in M_{G}^{p}(0,T)$. Moreover, we obtain that
\begin{align*}
&  \mathbb{\hat{E}}[\int_{0}^{T}|\eta_{t}^{N}-\bar{\eta}_{t}^{N,k}|^{p}dt]\\
&  \leq3^{p-1}(\mathbb{\hat{E}}[\int_{0}^{T}|\eta_{t}^{N}-\tilde{\eta}%
_{t}^{N,\varepsilon}|^{p}dt]+\mathbb{\hat{E}}[\int_{0}^{T}|\tilde{\eta}%
_{t}^{N,\varepsilon}-\bar{\eta}_{t}^{N,\varepsilon,k}|^{p}dt]+\mathbb{\hat{E}%
}[\int_{0}^{T}|\bar{\eta}_{t}^{N,\varepsilon,k}-\bar{\eta}_{t}^{N,k}%
|^{p}dt])\\
&  \leq3^{p-1}(\mathbb{\hat{E}}[\int_{0}^{T}|\eta_{t}^{N}-\tilde{\eta}%
_{t}^{N,\varepsilon}|^{p}dt]+\mathbb{\hat{E}}[\int_{0}^{T}|\tilde{\eta}%
_{t}^{N,\varepsilon}-\bar{\eta}_{t}^{N,\varepsilon,k}|^{p}dt]+\mathbb{\hat{E}%
}[\int_{0}^{T}|\tilde{\eta}_{t}^{N,\varepsilon}-\tilde{\eta}_{t}^{N,k}%
|^{p}dt])\\
&
\leq3^{p-1}(2(2N)^{p}T\varepsilon+\mathbb{\hat{E}}[\int_{0}^{T}|\tilde
{\eta}_{t}^{N,\varepsilon}-\bar{\eta}_{t}^{N,\varepsilon,k}|^{p}dt])\\
&  \leq3^{p-1}(2(2N)^{p}T\varepsilon+(2N)^{p}\frac{T}{k}+\mathbb{\hat{E}}%
[\int_{t_{1}^{k}}^{T}|\tilde{\eta}_{t}^{N,\varepsilon}-\bar{\eta}%
_{t}^{N,\varepsilon,k}|^{p}dt])\\
&  \leq3^{p-1}(2(2N)^{p}T\varepsilon+(2N)^{p}\frac{T}{k}+\mathbb{\hat{E}%
}[I_{K_{\varepsilon}^{c}}\int_{t_{1}^{k}}^{T}|\tilde{\eta}_{t}^{N,\varepsilon
}-\bar{\eta}_{t}^{N,\varepsilon,k}|^{p}dt]  +\mathbb{\hat{E}}[I_{K_{\varepsilon}}\int_{t_{1}^{k}}^{T}|\tilde{\eta}%
_{t}^{N,\varepsilon}-\bar{\eta}_{t}^{N,\varepsilon,k}|^{p}dt])\\
&
\leq3^{p-1}(3(2N)^{p}T\varepsilon+(2N)^{p}\frac{T}{k}+\sup_{(t,\omega
)\in \lbrack t_{1}^{k},T]\times
K_{\varepsilon}}T|\tilde{\eta}^{N,\varepsilon
}(t,\omega)-\tilde{\eta}^{N,\varepsilon}(t-\frac{T}{k},\omega)|^{p}).
\end{align*}
Noting that $[0,T]\times K_{\varepsilon}$ is compact and $\tilde{\eta
}^{N,\varepsilon}\in C_{b}([0,T]\times \Omega_{T})$, thus
\[
\underset{k\rightarrow \infty}{\limsup}\mathbb{\hat{E}}[\int_{0}^{T}|\eta
_{t}^{N}-\bar{\eta}_{t}^{N,k}|^{p}dt]\leq(6N)^{p}T\varepsilon,
\]
which implies $\eta^{N}\in M_{G}^{p}(0,T)$. The proof is complete.
\end{proof}

\begin{remark}{
\upshape
Note that the Tietze's extension theorem
cannot ensure the  extension of a progressively measurable process is also progressively measurable. Then we provide an
alternative way to prove
 the characterization of $M_{G}^{p}(0,T)$, which is  different from that of \cite{DHP11}.
}
\end{remark}

By Theorem
\ref{quasi-c}, we immediately have the following result.
\begin{corollary}
\label{new-cor1}Let $\eta \in M_{G}^{1}(0,T)$ and $f\in C_{b}%
([0,T]\times \mathbb{R})$. Then $(f(t,\eta_{t}))_{t\leq T}\in
M_{G}^{p}(0,T)$ for any $p\geq1$.
\end{corollary}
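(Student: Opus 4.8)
The plan is to verify the two defining conditions of the characterization in Theorem \ref{quasi-c}: the uniform-integrability-type condition and the existence of a quasi-continuous version. First I would check that $f(t,\eta_t)$ is progressively measurable and belongs to $\mathbb{M}^p(0,T)$: since $f$ is bounded, say $|f|\leq M$, we have $\hat{\mathbb{E}}[\int_0^T |f(t,\eta_t)|^p\,dt]\leq M^p T<\infty$, and progressive measurability follows from composing the progressively measurable process $(t,\eta_t)$ with the continuous (hence Borel) function $f$. The uniform integrability condition is then immediate: for $N>M$ we have $I_{\{|f(t,\eta_t)|\geq N\}}=0$, so $\hat{\mathbb{E}}[\int_0^T |f(t,\eta_t)|^p I_{\{|f(t,\eta_t)|\geq N\}}\,dt]=0$ for all large $N$, and the limit is trivially zero.

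The substantive part is producing a quasi-continuous version of $f(t,\eta_t)$. Since $\eta\in M_G^1(0,T)$, Theorem \ref{quasi-c} (applied with $p=1$) gives that $\eta$ has a quasi-continuous version, so I may assume $\eta$ itself is quasi-continuous. The plan is then to show that $f(t,\eta_t)$ is quasi-continuous directly from the definition: given $\varepsilon>0$, choose a progressive open set $G\subset[0,T]\times\Omega_T$ with $\hat{c}(G)<\varepsilon$ such that $\eta|_{G^c}$ is continuous. I claim the same $G$ works for $f(t,\eta_t)$. Indeed, on $G^c$ the map $(t,\omega)\mapsto(t,\eta_t(\omega))$ is continuous, and composing with the continuous function $f:[0,T]\times\mathbb{R}\to\mathbb{R}$ yields that $(t,\omega)\mapsto f(t,\eta_t(\omega))$ is continuous on $G^c$. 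Hence $f(t,\eta_t)$ is quasi-continuous, and it is of course progressively measurable. Combined with the uniform integrability check above, Theorem \ref{quasi-c} gives $f(t,\eta_t)\in M_G^p(0,T)$ for every $p\geq 1$.

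The only point that requires a little care — and the main (minor) obstacle — is the passage from "$\eta$ has a quasi-continuous version" to "we may assume $\eta$ is quasi-continuous": one must check that replacing $\eta$ by its quasi-continuous version $\eta'$ does not change $f(t,\eta_t)$ outside a $\hat{c}$-polar set, which is clear since $\{f(t,\eta_t)\neq f(t,\eta'_t)\}\subset\{\eta\neq\eta'\}$ and the latter has $\hat{c}$-capacity zero; thus $f(t,\eta'_t)$ is a quasi-continuous version of $f(t,\eta_t)$. Everything else is a routine composition-of-continuous-maps argument together with the boundedness of $f$, so no delicate estimates are needed.
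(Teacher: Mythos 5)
Your proposal is correct and follows exactly the route the paper intends: the paper states Corollary \ref{new-cor1} as a direct consequence of Theorem \ref{quasi-c}, and your argument simply fills in the routine verification (boundedness of $f$ giving the integrability condition, and composition of the quasi-continuous version of $\eta$ with the continuous $f$ giving a quasi-continuous version). No gaps.
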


\begin{theorem}
\label{nee-the1}Let $\eta^{k}$ be in $M_{G}^{1}(0,T)$, $k\geq1$, such
that
$\eta^{k}\downarrow \eta$ $\hat{c}$-q.s.. Then $\mathbb{\hat{E}}[\int_{0}%
^{T}\eta_{t}^{k}dt]\downarrow
\mathbb{\hat{E}}[\int_{0}^{T}\eta_{t}dt]$.
Moreover, if $\eta \in M_{G}^{1}(0,T)$, then $\mathbb{\hat{E}}[\int_{0}%
^{T}|\eta_{t}^{k}-\eta_{t}|dt]\downarrow0$.
\end{theorem}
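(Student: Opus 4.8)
\textbf{Proof proposal for Theorem \ref{nee-the1}.}

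The plan is to reduce the statement to an application of the monotone convergence result already available for random variables (Theorem \ref{new-qua1}) together with the characterization of $M_{G}^{1}(0,T)$ from Theorem \ref{quasi-c}. Set $\zeta^{k}=\int_{0}^{T}\eta_{t}^{k}\,dt$ and $\zeta=\int_{0}^{T}\eta_{t}\,dt$. The first step is to check that each $\zeta^{k}$ lies in $L_{G}^{1}(\Omega_{T})$: since $\eta^{k}\in M_{G}^{1}(0,T)$, one approximates $\eta^{k}$ by elements of $M_{G}^{0}(0,T)$ in $\|\cdot\|_{M_{G}^{1}}$, notes that integrating over $[0,T]$ is a bounded linear map from $M_{G}^{1}(0,T)$ to $L_{G}^{1}(\Omega_{T})$ (because $\mathbb{\hat{E}}[|\int_{0}^{T}\eta_{t}\,dt - \int_{0}^{T}\eta_{t}'\,dt|]\leq \mathbb{\hat{E}}[\int_{0}^{T}|\eta_{t}-\eta_{t}'|\,dt]=\|\eta-\eta'\|_{M_{G}^{1}}$), and that it clearly sends $M_{G}^{0}(0,T)$ into $L_{ip}(\Omega_{T})\subset L_{G}^{1}(\Omega_{T})$; hence $\zeta^{k}\in L_{G}^{1}(\Omega_{T})$.

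The second step is to pass the $\hat{c}$-q.s.\ monotone convergence $\eta^{k}\downarrow\eta$ through the time integral. By Proposition \ref{nee-pro1} (and the discussion of $\hat{c}$ versus $c$ right before it), $\hat{c}(\{\eta^{k}\neq\eta^{k+1}\text{ or }\eta^{k}\not\to\eta\}$-type sets$)$ being null translates into: for $c$-q.s.\ $\omega$, $\eta^{k}_{t}(\omega)\downarrow\eta_{t}(\omega)$ for Lebesgue-a.e.\ $t\in[0,T]$. For such a fixed $\omega$, the classical (deterministic) monotone/dominated convergence theorem on $([0,T],dt)$ — using that $\eta^{1}_{\cdot}(\omega)\in L^{1}([0,T])$ as an integrable dominating function for the decreasing sequence — gives $\zeta^{k}(\omega)=\int_{0}^{T}\eta_{t}^{k}(\omega)\,dt\downarrow\int_{0}^{T}\eta_{t}(\omega)\,dt=\zeta(\omega)$. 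Therefore $\zeta^{k}\downarrow\zeta$ $c$-q.s.\ on $\Omega_{T}$. Now Theorem \ref{new-qua1} applies directly to the sequence $\zeta^{k}\in L_{G}^{1}(\Omega_{T})$ and yields $\mathbb{\hat{E}}[\zeta^{k}]\downarrow\mathbb{\hat{E}}[\zeta]$, which is the first assertion.

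For the second assertion, assume $\eta\in M_{G}^{1}(0,T)$; then by the first step $\zeta\in L_{G}^{1}(\Omega_{T})$ as well, so the ``in particular'' clause of Theorem \ref{new-qua1} gives $\mathbb{\hat{E}}[|\zeta^{k}-\zeta|]\downarrow 0$. To upgrade this to $\mathbb{\hat{E}}[\int_{0}^{T}|\eta_{t}^{k}-\eta_{t}|\,dt]\downarrow 0$, observe that $\eta^{k}-\eta\geq 0$ $\hat{c}$-q.s., so $|\eta^{k}_{t}-\eta_{t}|=\eta^{k}_{t}-\eta_{t}$ for a.e.\ $t$, $c$-q.s.\ $\omega$, whence $\int_{0}^{T}|\eta_{t}^{k}-\eta_{t}|\,dt=\zeta^{k}-\zeta$ $c$-q.s.; taking $\mathbb{\hat{E}}$ and invoking the previous line finishes the proof.

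\textbf{Main obstacle.} The only genuinely non-routine point is the first step: verifying that $\int_{0}^{T}\eta_{t}\,dt\in L_{G}^{1}(\Omega_{T})$ for $\eta\in M_{G}^{1}(0,T)$ — i.e.\ that the time-integral operator really maps into $L_{G}^{1}$ and not merely into $L^{1}$ of some representing measure. Once this is in hand, the reduction to Theorem \ref{new-qua1} via Proposition \ref{nee-pro1} and Fubini/deterministic dominated convergence is mechanical. (One should also be slightly careful that ``$\eta^{k}\downarrow\eta$ $\hat{c}$-q.s.'' is interpreted so that the exceptional $\hat{c}$-null set, by Proposition \ref{nee-pro1}, corresponds to a $c$-null set of bad $\omega$'s off which the pointwise-in-$t$ decrease holds for a.e.\ $t$.)
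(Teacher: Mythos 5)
Your proposal is correct and follows essentially the same route as the paper: show $\int_{0}^{T}\eta_{t}^{k}\,dt\in L_{G}^{1}(\Omega_{T})$ by approximation from $M_{G}^{0}(0,T)$, use Proposition \ref{nee-pro1} together with deterministic monotone convergence in $t$ to get $\int_{0}^{T}\eta_{t}^{k}\,dt\downarrow\int_{0}^{T}\eta_{t}\,dt$ $c$-q.s., and then invoke Theorem \ref{new-qua1}. The only (immaterial) difference is in the second assertion, where the paper applies the first part to $|\eta^{k}-\eta|\downarrow 0$ in $M_{G}^{1}(0,T)$ while you use the ``in particular'' clause of Theorem \ref{new-qua1} plus monotonicity to identify $\int_{0}^{T}|\eta_{t}^{k}-\eta_{t}|\,dt$ with $|\zeta^{k}-\zeta|$; both are equally valid.
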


\begin{proof}
Since $\eta^{k}\in M_{G}^{1}(0,T)$, we can choose $\eta^{k,N}\in M_{G}%
^{0}(0,T)$ such that
$\mathbb{\hat{E}}[\int_{0}^{T}|\eta_{t}^{k}-\eta
_{t}^{k,N}|dt]\rightarrow0$ as $N\rightarrow \infty$. It is easy to
check that
$\int_{0}^{T}\eta_{t}^{k,N}dt\in L_{G}^{1}(\Omega_{T})$ and $\mathbb{\hat{E}%
}[|\int_{0}^{T}\eta_{t}^{k,N}dt-\int_{0}^{T}\eta_{t}^{k}dt|]\leq
\mathbb{\hat{E}}[\int_{0}^{T}|\eta_{t}^{k}-\eta_{t}^{k,N}|dt]$. Then we get
$\int _{0}^{T}\eta_{t}^{k}dt\in L_{G}^{1}(\Omega_{T})$ for $k\geq1$.
By Proposition \ref{nee-pro1} and Theorem \ref{quasi-c}, it is easy
to verify that $\int _{0}^{T}\eta_{t}^{k}dt\downarrow
\int_{0}^{T}\eta_{t}dt$ $c$-q.s.. Thus, applying
Theorem \ref{new-qua1} yields that $\mathbb{\hat{E}}[\int_{0}^{T}\eta_{t}%
^{k}dt]\downarrow \mathbb{\hat{E}}[\int_{0}^{T}\eta_{t}dt]$. If
$\eta \in
M_{G}^{1}(0,T)$, then $|\eta^{k}-\eta|\in M_{G}^{1}(0,T)$ and $|\eta^{k}%
-\eta|\downarrow0$ $\hat{c}$-q.s., which implies that $\mathbb{\hat{E}}[\int_{0}^{T}%
|\eta_{t}^{k}-\eta_{t}|dt]\downarrow0$.
\end{proof}

The following example shows that $M_{G}^{p}(0,T)$ is strictly
contained in $\mathbb{M}^{p}(0,T)$.
\begin{example}{\upshape
Suppose $0<\underline{\sigma}^{2}<\bar{\sigma}^{2}<\infty$, $T>0$.
We consider $1$-dimensional $G$-Brownian motion $(B_{t})_{t\geq0}$.
$(\langle B\rangle _{t})_{t\geq0}$ is the quadratic process of
$(B_{t})_{t\geq0}$. Let
\[
\eta_{t}=I_{\{ \langle
B\rangle_{t}=\frac{(\underline{\sigma}^{2}+\bar{\sigma
}^{2})t}{2}\}}\text{ for }t\leq T.
\]
Then we claim that $\eta \not \in M_{G}^{1}(0,T)$. Indeed we can
choose
$f^{k}(t,x)\in C_{b}([0,T]\times \mathbb{R})$, $k\geq1$, such that%
\[
f^{k}(t,x)=1\text{ for }|x-\frac{(\underline{\sigma}^{2}+\bar{\sigma}^{2}%
)t}{2}|\leq \frac{T}{k};f^{k}(t,x)=0\text{ for
}|x-\frac{(\underline{\sigma }^{2}+\bar{\sigma}^{2})t}{2}|\geq
\frac{2T}{k}.
\]
Set $g^{k}=\wedge_{i=1}^{k}f^{i}$, it is easy to check that
$g^{k}\in C_{b}([0,T]\times \mathbb{R})$, $g^{k}(t,x)=1$ for
$|x-\frac{(\underline {\sigma}^{2}+\bar{\sigma}^{2})t}{2}|\leq
\frac{T}{k}$ and $g^{k}\downarrow
I_{\{x=\frac{(\underline{\sigma}^{2}+\bar{\sigma}^{2})t}{2}\}}$.
Since $g^{k}(t,\langle B\rangle_{t})\downarrow \eta_{t}$,  we have $g^{k}(t,\langle B\rangle_{t})\in
M_{G}^{1}(0,T)$ by
Corollary \ref{new-cor1}. If $\eta \in M_{G}^{1}(0,T)$, then  it  following  from Theorem
\ref{nee-the1} that $\mathbb{\hat
{E}}[\int_{0}^{T}|g^{k}(t,\langle
B\rangle_{t})-\eta_{t}|dt]\downarrow0$. On the other hand, by the
representation of $\mathbb{\hat{E}}[\cdot]$ in \cite{DHP11}, there
exists a probability measure $P\in \mathcal{P}$ such that
$\langle B\rangle_{t}=((\frac{(\underline{\sigma}^{2}+\bar{\sigma}^{2})}%
{2}-\frac{1}{k})\vee \underline{\sigma}^{2})t$ $P$-a.s.. Therefore we have $\mathbb{\hat{E}%
}[\int_{0}^{T}|g^{k}(t,\langle B\rangle_{t})-\eta_{t}|dt]\geq
E_{P}[\int _{0}^{T}|g^{k}(t,\langle B\rangle_{t})-\eta_{t}|dt]=T$
and this contradiction implies that $\eta \not \in M_{G}^{1}(0,T)$.}
\end{example}

\subsection{$G$-integrable processes}
In the above subsection, we give the characterization of $M_{G}^{p}(0,T)$.
However, it is also difficult to check that a  progressively measurable process
is quasi-continuous. Then the present section is devoted to finding some Borel measurable
functions on $[0,T]\times\Omega_T$ are quasi-continuous processes.

In this section, we always assume $n\leq d$ and (H1)-(H3) hold.
For some fixed $x_0\in\mathbb{R}^n$,  consider the $G$-It\^{o}
process $X^{x_0}$ given by \eqref{SDE1}.
For convenience, we set $X=X^{x_0}$.
\begin{theorem}[Krylov's estimates]\label{my4}
For each $\delta>0$ and $p\geq n$,
there exists a constant $N$ depending on  $p,{\lambda},{\Lambda}, L,G$ and $\delta$ such that for each Borel measurable function
$f(t,x)$ and $g(x)$,
\begin{align*}
&\mathbb{\hat{E}}[\int^{\infty}_0 \exp(-\delta t)|f(t,X_t)|dt]\leq N\|f\|_{L^{p+1}([0,\infty)\times\mathbb{R}^n)},\\
&\mathbb{\hat{E}}[\int^{\infty}_0 \exp(-\delta t)|g(X_t)|dt]\leq N\|g\|_{L^p(\mathbb{R}^n)}.
\end{align*}
\end{theorem}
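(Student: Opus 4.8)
The plan is to follow the same route as the proof of Theorem \ref{my6}, but to feed the resulting classical It\^{o} process into the whole-space, discounted form of Krylov's estimate rather than the bounded-domain, finite-horizon one. First I would use the representation theorem (Theorem \ref{the2.7}) to reduce the claim to a bound for $E_P$ that is uniform over $P\in\mathcal{P}$: it suffices to find a single constant $N=N(p,\bar{\lambda},\bar{\Lambda},L,G,\delta)$, independent of $P$, with
\begin{align*}
E_P\Big[\int_0^\infty e^{-\delta t}|f(t,X_t)|\,dt\Big]&\le N\|f\|_{L^{p+1}([0,\infty)\times\mathbb{R}^n)},\\
E_P\Big[\int_0^\infty e^{-\delta t}|g(X_t)|\,dt\Big]&\le N\|g\|_{L^{p}(\mathbb{R}^n)}
\end{align*}
for every $P\in\mathcal{P}$; taking the supremum over $\mathcal{P}$ then yields the assertion.

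For a fixed $P\in\mathcal{P}$, exactly as in the proof of Theorem \ref{my6} one has $d\langle B^j,B^k\rangle_t=\hat{\gamma}^{jk}_t\,dt$ with $\underline{\sigma}^2 I_{d\times d}\le\hat{\gamma}_t\le\bar{\sigma}^2 I_{d\times d}$, so $\hat{\gamma}_s^{-1/2}$ is well defined and $W^P_t:=\int_0^t\hat{\gamma}_s^{-1/2}\,dB_s$ is a $P$-Brownian motion; consequently
\[
X_t=x_0+\int_0^t\big(\alpha_s+\beta^{jk}_s\hat{\gamma}^{jk}_s\big)\,ds+\int_0^t\sigma_s\hat{\gamma}_s^{1/2}\,dW^P_s,\qquad P\text{-a.s.}
\]
Here \emph{(B1)} and the bounds on $\hat{\gamma}$ make the drift bounded by a constant depending only on $L$, $d$, $\bar{\sigma}^2$, while \emph{(B2)}, \emph{(B3)} together with those bounds give $\bar{\lambda}\underline{\sigma}^2 I_{n\times n}\le(\sigma_s\hat{\gamma}_s^{1/2})(\sigma_s\hat{\gamma}_s^{1/2})^T\le\bar{\Lambda}\bar{\sigma}^2 I_{n\times n}$. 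Hence, under every $P\in\mathcal{P}$, $X$ is an $n$-dimensional non-degenerate It\^{o} process whose drift bound and ellipticity constants depend only on $p,\bar{\lambda},\bar{\Lambda},L,G,\delta$.

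Then I would apply the classical whole-space Krylov estimates with the exponential weight $e^{-\delta t}$ (see Krylov \cite{KV1}, and also \cite{KV}; these are the $\mathbb{R}^n$, discounted versions of the estimate used in Theorem \ref{my6}) to this diffusion. Because Krylov's constant depends only on $n$, the ellipticity bounds, the drift bound and $\delta$, it is the same for every $P\in\mathcal{P}$, so the two displayed bounds for $E_P$ follow; taking $\sup_{P\in\mathcal{P}}$ and invoking Theorem \ref{the2.7} once more finishes the proof. I expect the only genuinely delicate point to be the uniformity of Krylov's constant over $\mathcal{P}$, which is however automatic here since \emph{(B1)}--\emph{(B3)} and the non-degeneracy of $G$ supply $P$-independent ellipticity and boundedness bounds; the thresholds $p\ge n$ and the norms $L^{p+1}$ in space-time and $L^{p}$ in space are precisely those of the classical non-degenerate Krylov estimates, and the time-homogeneous inequality for $g$ additionally calls for the stationary Green-function form of Krylov's estimate rather than the parabolic one.
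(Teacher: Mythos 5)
Your proposal is correct and follows essentially the same route as the paper: the paper's proof of this theorem simply invokes the reduction carried out in the proof of Theorem \ref{my6} (representation of $\mathbb{\hat{E}}$ by $\mathcal{P}$, the density $\hat{\gamma}_t$ of $\langle B\rangle_t$, the $P$-Brownian motion $W^P$, and the resulting uniform-in-$P$ ellipticity and drift bounds) together with the discounted whole-space estimate, Theorem 3.4 in Chapter 2 of Krylov \cite{KV1}, which is exactly the estimate you apply.
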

\begin{proof}
Let $\mathcal{P}$ be the weakly compact set that represents
$\mathbb{\hat{E}}$. By Corollary 5.7 in Chapter 3 of \cite{P10}, we
obtain that $d\langle B^j, B^k\rangle_t = \hat{\gamma}^{jk}_tdt$
q.s. and $\underline{\sigma}^2tI_{d\times d}\leq
\hat{\gamma}_t=(\hat{\gamma}^{jk}_t)_{j,k=1}^d\leq\bar{\sigma}^2tI_{d\times
d} $. Note that $B$ is a martingale  on the probability  space
$(\Omega, (\mathcal{F}_t)_{t\geq 0}, P)$ for each $P\in\mathcal{P}.$
Then it is easy to check that \[
W^P_t:=\int^t_0\hat{\gamma}_s^{-\frac{1}{2}}dB_s, \ \ P-a.s.
\]
is a Brownian motion  on $(\Omega, (\mathcal{F}_t)_{t\geq 0}, P)$. Thus  we have
\begin{equation*}
X_{t}=x_0+\int_{0}^{t}\alpha_sds+\int_{0}^{t}\beta^{jk}_s\hat{\gamma}^{jk}_sds+\int_{0}^{t}\sigma_s\hat{\gamma}_s^{\frac{1}{2}}dW^P_s, \ P-a.s..
\end{equation*}
Applying Theorem 3.4 in Chapter 2 of Krylov \cite{KV1} (see also \cite{KV}), we can find a constant ${N}$ depending on  $p, {\lambda}, \Lambda, L, G$  and $\delta$   such that for each Borel measurable function
$f(t,x)$,
\begin{align*}
{E}_P[\int^{\infty}_0 \exp(-\delta t)|f(t,X_t)|dt]\leq \bar{N}\|f\|_{L^{p+1}([0,T]\times \mathbb{R}^n)}.
\end{align*}
Therefore, we have
\begin{align*}
\mathbb{\hat{E}}[\int^{\infty}_0 \exp(-\delta t)|f(t,X_t)|dt]=\sup\limits_{P\in\mathcal{P}}E_P[\int^{\infty}_0 \exp(-\delta t)|f(t,X_t)|dt]\leq {N}\|f\|_{L^{p+1}([0,T]\times \mathbb{R}^n)}
\end{align*}
and the second inequality can be proved in a similar way.
\end{proof}

The following estimates are from Theorem \ref{my4}.
\begin{corollary}\label{my5}
For each $T>0$ and $p\geq n$, there exists a constant $N_T$ depending on  $p,{\lambda},{\Lambda}, L,G$ and $T$ such that for each Borel measurable function
$f(t,x)$ and $g(x)$,
\begin{align*}
&\mathbb{\hat{E}}[\int^{T}_0 |f(t,X_t)|dt]\leq N_T\|f\|_{L^{p+1}([0,T]\times\mathbb{R}^n)},\\
&\mathbb{\hat{E}}[\int^{T}_0 |g(X_t)|dt]\leq N_T\|g\|_{L^p(\mathbb{R}^n)}.
\end{align*}
\end{corollary}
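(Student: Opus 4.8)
The plan is to derive Corollary \ref{my5} as a specialization of Theorem \ref{my4} by choosing the decay parameter $\delta$ appropriately and absorbing the exponential weight on the finite interval $[0,T]$. First I would fix $T>0$ and $p\geq n$, and simply take any convenient value of $\delta$, say $\delta=1$; Theorem \ref{my4} then furnishes a constant $N$ depending on $p,\bar{\lambda},\bar{\Lambda},L,G$ (and on this fixed $\delta$) such that
\[
\mathbb{\hat{E}}\Big[\int^{\infty}_0 \exp(-\delta t)|f(t,X_t)|dt\Big]\leq N\|f\|_{L^{p+1}([0,\infty]\times\mathbb{R}^n)}.
\]

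Next I would restrict the time integral to $[0,T]$ and compare the weighted integrand with the unweighted one: on $[0,T]$ we have $\exp(-\delta t)\geq \exp(-\delta T)$, so for any Borel $f$,
\[
\exp(-\delta T)\,\mathbb{\hat{E}}\Big[\int^{T}_0 |f(t,X_t)|dt\Big]\leq \mathbb{\hat{E}}\Big[\int^{T}_0 \exp(-\delta t)|f(t,X_t)|dt\Big]\leq \mathbb{\hat{E}}\Big[\int^{\infty}_0 \exp(-\delta t)|f(t,X_t)|dt\Big].
\]
Here I use monotonicity of $\mathbb{\hat{E}}$ together with the fact that the integrand is nonnegative, so enlarging the time interval only increases the sublinear expectation. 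Combining with Theorem \ref{my4} and setting $N_T:=\exp(\delta T)\,N=e^{T}N$, which depends only on $p,\bar{\lambda},\bar{\Lambda},L,G$ and $T$, gives
\[
\mathbb{\hat{E}}\Big[\int^{T}_0 |f(t,X_t)|dt\Big]\leq N_T\|f\|_{L^{p+1}([0,T]\times\mathbb{R}^n)},
\]
where one also uses $\|f\|_{L^{p+1}([0,\infty]\times\mathbb{R}^n)}=\|f\|_{L^{p+1}([0,T]\times\mathbb{R}^n)}$ after replacing $f$ by $f\cdot I_{[0,T]}$ (which does not change the left-hand side). The estimate for $g(X_t)$ follows by the identical argument applied to the second inequality of Theorem \ref{my4}, noting that a time-independent $g$ trivially has the same $L^p(\mathbb{R}^n)$ norm.

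This is a purely routine deduction, so I do not anticipate a genuine obstacle; the only point requiring a line of care is confirming that the dependence of the constant is as claimed — namely that fixing $\delta$ turns the $\delta$-dependence of $N$ in Theorem \ref{my4} into a harmless dependence on $T$ alone (through the factor $e^{T}$ and the fixed choice $\delta=1$), so that $N_T$ depends on exactly $p,\bar{\lambda},\bar{\Lambda},L,G,T$ as stated.
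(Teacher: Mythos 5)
Your deduction is correct and is exactly the argument the paper intends: the paper simply labels this corollary a direct consequence of Theorem \ref{my4}, i.e.\ apply that theorem to $f\,I_{[0,T]}$ (resp.\ $g$), bound $\exp(-\delta t)\geq e^{-\delta T}$ on $[0,T]$, and absorb the factor $e^{\delta T}$ into the constant $N_T$. No issues with your handling of the constant's dependence on $T$.
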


From
 now on, we shall use Krylov's estimates to generate some
quasi-continuous processes.
\begin{lemma}\label{myq8}
\begin{description}
\item[(i)] If $\psi$ is in $L^{p}([0,T]\times \mathbb{R}^{n})$ with $p\geq
n+1$, then for each $T>0$, we have $(\psi(t,X_{t}))_{t\leq T}\in M_{G}%
^{1}(0,T)$. Moreover, for each $\psi^{\prime}=\psi,a.e.$, we have
$\psi ^{\prime}(\cdot,X_{\cdot})=\psi(\cdot,X_{\cdot})$;

\item[(ii)] If $\varphi$ is in $L^{p}(\mathbb{R}^{n})$ with $p\geq n$, then
for each $T>0$, we have $(\varphi(X_{t}))_{t\leq T}\in
M_{G}^{1}(0,T)$. Moreover, for each
$\varphi^{\prime}=\varphi,a.e.$, we have $\varphi^{\prime
}(X_{\cdot})=\varphi(X_{\cdot})$.
\end{description}
\end{lemma}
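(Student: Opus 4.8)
The plan is to reduce Lemma \ref{myq8} to Krylov's estimates (Corollary \ref{my5}) together with the characterization of $M_G^p(0,T)$ from Theorem \ref{quasi-c}. I would treat case (i) in detail and then remark that case (ii) is identical with $g$-type estimates in place of $f$-type estimates. Throughout I fix $T>0$ and write $p\geq n+1$ for case (i) (resp. $p\geq n$ for case (ii)).

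\textbf{Step 1: Approximation by continuous functions.} First I would choose a sequence $\psi_m\in C_b([0,T]\times\mathbb{R}^n)$ with compact support such that $\|\psi_m-\psi\|_{L^p([0,T]\times\mathbb{R}^n)}\to 0$; this is possible since $C_c$ is dense in $L^p$ for $p<\infty$. By Corollary \ref{new-cor1} each process $(\psi_m(t,X_t))_{t\leq T}$ lies in $M_G^q(0,T)$ for every $q\geq 1$, in particular in $M_G^1(0,T)$. Next I apply Corollary \ref{my5} (with exponent $p-1\geq n$, so that the space $L^{(p-1)+1}=L^p$ appears on the right) to the Borel function $|\psi_m-\psi_{m'}|$:
\begin{align*}
\mathbb{\hat{E}}\Big[\int_0^T|\psi_m(t,X_t)-\psi_{m'}(t,X_t)|\,dt\Big]\leq N_T\|\psi_m-\psi_{m'}\|_{L^p([0,T]\times\mathbb{R}^n)}.
\end{align*}
Hence $(\psi_m(\cdot,X_\cdot))_m$ is Cauchy in $M_G^1(0,T)$, so it converges to some limit $\eta\in M_G^1(0,T)$. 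Again applying Corollary \ref{my5} to $|\psi_m-\psi|$ gives $\mathbb{\hat{E}}[\int_0^T|\psi_m(t,X_t)-\psi(t,X_t)|\,dt]\to 0$, which identifies the $M_G^1$-limit: $\eta=\psi(\cdot,X_\cdot)$ in $\mathbb{M}^1(0,T)$, i.e. $\hat c(\{\eta\neq\psi(\cdot,X_\cdot)\})=0$. Therefore $(\psi(t,X_t))_{t\leq T}\in M_G^1(0,T)$.

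\textbf{Step 2: Well-definedness up to $a.e.$ modification.} If $\psi'=\psi$ a.e.\ on $[0,T]\times\mathbb{R}^n$, then $\|\psi'-\psi\|_{L^p}=0$, so Corollary \ref{my5} applied to $|\psi'-\psi|$ yields $\mathbb{\hat{E}}[\int_0^T|\psi'(t,X_t)-\psi(t,X_t)|\,dt]=0$. By Proposition \ref{nee-pro1} (taking $A=\{\psi'(\cdot,X_\cdot)\neq\psi(\cdot,X_\cdot)\}$) this means $\int_0^T I_A(t,\cdot)\,dt=0$ $c$-q.s., i.e.\ $\psi'(\cdot,X_\cdot)=\psi(\cdot,X_\cdot)$ in the sense of the identification $\hat c(\{\cdot\neq\cdot\})=0$. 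This also shows the construction of $\psi(\cdot,X_\cdot)$ in Step 1 does not depend on the choice of Borel representative. Part (ii) follows verbatim, using the second inequality of Corollary \ref{my5}, replacing $L^p([0,T]\times\mathbb{R}^n)$ by $L^p(\mathbb{R}^n)$ and the exponent bookkeeping correspondingly (there $p\geq n$ suffices since the estimate reads $\|g\|_{L^p(\mathbb{R}^n)}$ directly).

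\textbf{Main obstacle.} The only genuine subtlety is that $\psi(\cdot,X_\cdot)$ is a priori only a progressively measurable element of $\mathbb{M}^1(0,T)$, and one must be sure the Krylov bound genuinely controls the $G$-expectation norm of a Borel (not continuous) functional of $X$; this is exactly what Corollary \ref{my5} provides, so the argument goes through. One should double-check the measurability/progressive-measurability of $(t,\omega)\mapsto\psi(t,X_t(\omega))$ for Borel $\psi$ — this is standard since $X$ is a continuous adapted process — and that $\psi(\cdot,X_\cdot)$ indeed lies in $\mathbb{M}^1(0,T)$, which is immediate from the Krylov estimate applied to $|\psi|$ itself. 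No appeal to the delicate PDE arguments of Section 3 is needed here.
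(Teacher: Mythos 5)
Your proposal is correct and follows essentially the same route as the paper: approximate the Borel function by bounded continuous functions in $L^p$, invoke Theorem \ref{quasi-c} (or Corollary \ref{new-cor1}) for membership of the continuous approximants in $M_G^1(0,T)$, and use Krylov's estimate (Corollary \ref{my5}) both to pass to the limit in the closed space $M_G^1(0,T)$ and to handle the a.e.-representative claim. The only cosmetic differences are that the paper writes out case (ii) and calls (i) analogous (you do the reverse), and your exponent bookkeeping $p-1\geq n$ for the $L^{p}$-norm in case (i) is the correct way to apply the corollary.
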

\begin{proof} We only prove (ii), since (i) can be proved in a similar way. Note that there exists a
sequence of bounded continuous  functions $(\varphi^{k})_{k\geq 1}$, which  converges  to $\varphi$ in ${L^{p}(\mathbb{R}^{n})}$.
Then by Corollary \ref{my5}, we can find a constant $C^{\prime}$
so that
\[
\lim \limits_{k\rightarrow
\infty}\mathbb{\hat{E}}[\int_{0}^{T}|\varphi
^{k}-\varphi|(X_{t})dt]\leq C^{\prime}\lim \limits_{k\rightarrow \infty}%
\Vert \varphi^{k}-\varphi \Vert_{L^{p}(\mathbb{R}^{n})}=0.
\]
By Theorem \ref{quasi-c}, we can get $(\varphi^{k}(X_{t}))_{t\leq
T}\in M_{G}^{1}(0,T)$ for each $k\geq1$. Thus we obtain
$(\varphi(X_{t}))_{t\leq T}\in M_{G}^{1}(0,T)$.

Assume $\varphi=\varphi^{\prime},a.e.$. Applying  Corollary \ref{my5} again, we conclude that
\[
\mathbb{\hat{E}}[\int_{0}^{T}|\varphi^{\prime}-\varphi|(X_{t})dt]\leq
C^{\prime}\Vert \varphi^{\prime}-\varphi
\Vert_{L^{p}(\mathbb{R}^{n})}=0,
\]
which completes the proof.
\end{proof}

\begin{theorem}
\label{myq13}  Let
$(\varphi^{k})_{k\geq1}$ be a sequence of $\mathbb{R}^{n}$-valued Borel
measurable functions and $|\varphi ^{k}(x)|\leq \bar{C}(1+|x|^{l})$,
$k\geq1$ for some constants $\bar{C}$ and $l$. If
$\varphi^{k}\rightarrow \varphi$, a.e., then for each $T>0$ and
$p\geq 1$,
\[
\lim \limits_{k\rightarrow
\infty}\mathbb{\hat{E}}[\int_{0}^{T}|\varphi
^{k}(X_{t})-\varphi(X_{t})|^{p}dt]=0.
\]

\end{theorem}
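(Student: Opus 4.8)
The plan is to reduce the statement to a dominated-convergence-type argument on the deterministic "integrated capacity" space $\mathbb{M}^p(0,T)$, using the Krylov estimate (Corollary~\ref{my5}) as the key tool to dominate $L^p$-norms of Borel functions composed with $X_t$ by $L^p(\mathbb{R}^n)$-norms of those functions. First I would fix $T>0$ and $p\geq 1$, and replace $\varphi^k$ and $\varphi$ by their restrictions to a large ball $B_R=\{|x|\leq R\}$; since $|X_t|$ is $q.s.$ finite and, more to the point, the Krylov estimate bounds $\mathbb{\hat{E}}[\int_0^T |h(X_t)|\,dt]$ by $N_T\|h\|_{L^q(\mathbb{R}^n)}$ for suitable $q$, I would first show that the contribution from $\{|X_t|>R\}$ is small uniformly in $k$: indeed $|\varphi^k(X_t)-\varphi(X_t)|^p I_{\{|X_t|>R\}}\leq C'(1+|X_t|^l)^p I_{\{|X_t|>R\}}$, and the right side is of the form $h_R(X_t)$ with $\|h_R\|_{L^q}\to 0$ as $R\to\infty$ (once $q$ is chosen large enough that $(1+|x|^l)^p I_{\{|x|>R\}}\in L^q$). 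Wait — that last integrability fails on all of $\mathbb{R}^n$ since $(1+|x|^l)^{pq}$ is not integrable at infinity. So instead I would use Corollary~\ref{my5} with the exponential-discount version (Theorem~\ref{my4}) replaced by a direct estimate, or rather bound $\mathbb{\hat{E}}[\int_0^T(1+|X_t|^l)^pI_{\{|X_t|>R\}}\,dt]$ using the moment estimates for $G$-It\^o processes (boundedness of $\alpha,\beta$ via (B1) and the BDG-type inequality for the stochastic integral give $\mathbb{\hat{E}}[\sup_{t\leq T}|X_t|^m]<\infty$ for all $m$), and then Markov's inequality in the time-integrated sense: $\mathbb{\hat{E}}[\int_0^T(1+|X_t|^l)^pI_{\{|X_t|>R\}}\,dt]\leq R^{-1}\mathbb{\hat{E}}[\int_0^T(1+|X_t|^l)^p|X_t|\,dt]\to 0$.

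Having truncated to $B_R$, on that ball $|\varphi^k(x)-\varphi(x)|^p\leq \big(2\bar C(1+R^l)\big)^p=:M_R$ is uniformly bounded, and $\varphi^k\to\varphi$ a.e.\ on $B_R$. Then by the dominated convergence theorem in $L^q(B_R)$ — where $q=p+1$ if I am going to invoke the first inequality of Corollary~\ref{my5} with $f(t,x)=|\varphi^k(x)-\varphi(x)|^p$ viewed as time-independent, or $q=p$ when using the $g$-version, and I should also be careful that $|\varphi^k-\varphi|^p$ lies in $L^q(B_R)$, which holds since it is bounded on the bounded set $B_R$ — I get
\[
\|\,|\varphi^k-\varphi|^p I_{B_R}\|_{L^{q}(\mathbb{R}^n)}\longrightarrow 0\quad\text{as }k\to\infty.
\]
Applying Corollary~\ref{my5} (the $g$-inequality, with $g=|\varphi^k-\varphi|^pI_{B_R}\in L^{p+1}(\mathbb{R}^n)\subset$ the relevant space after noting $p+1\geq n+1>n$ when needed, or directly $L^p$ if $p\geq n$; in general one takes the exponent large enough, which is legitimate since $g$ is bounded with bounded support) yields
\[
\mathbb{\hat{E}}\Big[\int_0^T|\varphi^k(X_t)-\varphi(X_t)|^p I_{\{|X_t|\leq R\}}\,dt\Big]\leq N_T\,\|\,|\varphi^k-\varphi|^pI_{B_R}\|_{L^{q}(\mathbb{R}^n)}\to 0 .
\]
Combining: $\limsup_{k}\mathbb{\hat{E}}[\int_0^T|\varphi^k(X_t)-\varphi(X_t)|^p\,dt]\leq \varepsilon_R$, where $\varepsilon_R\to 0$ as $R\to\infty$ by the truncation step; letting $R\to\infty$ finishes the proof.

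The main obstacle is the truncation-at-infinity step: the polynomial-growth bound on $\varphi^k$ is not by itself enough to dominate things in any fixed $L^q(\mathbb{R}^n)$, so I must genuinely use that $X$ is a $G$-It\^o process with bounded drift coefficients and a nondegenerate diffusion, extracting (i) uniform-in-$k$ control of the tail $\int_0^T I_{\{|X_t|>R\}}(1+|X_t|^l)^p\,dt$ in $\mathbb{\hat{E}}$ via moment bounds on $\sup_{t\leq T}|X_t|$ under $\mathbb{\hat{E}}$ (these follow from (B1), Burkholder–Davis–Gundy under $G$-expectation, and Gronwall), and then on the compact ball using Krylov's estimate to convert a.e.\ convergence into $\mathbb{\hat{E}}$-convergence. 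A secondary technical point is matching the integrability exponent demanded by Corollary~\ref{my5} — but since the truncated functions are bounded with compact support they lie in every $L^q$, so one simply chooses $q$ as large as the corollary requires. Once these two ingredients are in place the rest is a standard $\varepsilon/2$ split, so I would present it in that order: moment bound and tail truncation first, then DCT on the ball, then the Krylov estimate, then combine.
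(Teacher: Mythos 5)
Your proposal is correct and follows essentially the same route as the paper: split at the ball $\{|X_t|\leq N\}$, apply Corollary \ref{my5} (Krylov's estimate) together with the classical dominated convergence theorem on the ball, and control the tail via $I_{\{|X_t|>N\}}\leq |X_t|/N$ plus moment bounds for the $G$-It\^{o} process, then let the radius tend to infinity. The only cosmetic difference is that the paper first invokes Lemma \ref{myq8} to justify assuming the polynomial bound $|\varphi(x)|\leq \bar C(1+|x|^{l})$ everywhere rather than merely a.e., a point your tail estimate uses implicitly.
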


\begin{proof}
By Lemma \ref{myq8}, we may assume that $|\varphi(x)|\leq
\bar{C}(1+|x|^{l})$. For each fixed $N>0$, we have
\begin{align*}
\hat{\mathbb{E}}[\int_{0}^{T}|\varphi^{k}(X_{t})-\varphi(X_{t})|^{p}dt]\leq
&
\hat{\mathbb{E}}[\int_{0}^{T}|\varphi^{k}(X_{t})-\varphi(X_{t})|^{p}%
I_{\{|X_{t}|\leq N\}}dt]\\
&  +\hat{\mathbb{E}}[\int_{0}^{T}|\varphi^{k}(X_{t})-\varphi(X_{t}%
)|^{p}I_{\{|X_{t}|\geq N\}}dt].
\end{align*}
By Corollary \ref{my5}, there exists a constant $C^{\prime}$
independent of $k$ such that
\[
\hat{\mathbb{E}}[\int_{0}^{T}|\varphi^{k}(X_{t})-\varphi(X_{t})|^{p}%
I_{\{|X_{t}|\leq N\}}dt]\leq C^{\prime}|\int_{\{|x|\leq N\}}|\varphi
^{k}(x)-\varphi(x)|^{np}dx|^{\frac{1}{n}}.
\]
Then applying Lesbesgue's dominated convergence theorem yields that%
\[
\hat{\mathbb{E}}[\int_{0}^{T}|\varphi^{k}(X_{t})-\varphi(X_{t})|^{p}%
I_{\{|X_{t}|\leq N\}}dt]\rightarrow0\text{ as }k\rightarrow \infty.
\]
Noting that $|\varphi^{k}(X_{t})-\varphi(X_{t})|^{p}I_{\{|X_{t}|\geq N\}}%
\leq \frac{(2\bar{C})^{p}}{N}(1+|X_{t}|^{l})^{p}|X_{t}|$, then we
get
\[
\underset{k\rightarrow \infty}{\limsup}\hat{\mathbb{E}}[\int_{0}^{T}%
|\varphi^{k}(X_{t})-\varphi(X_{t})|^{p}dt]\leq
\frac{(2\bar{C})^{p}}{N}\int
_{0}^{T}\hat{\mathbb{E}}[(1+|X_{t}|^{l})^{p}|X_{t}|]dt.
\]
Since $N$ can be arbitrarily large, we obtain
\[
\lim \limits_{k\rightarrow
\infty}\hat{\mathbb{E}}[\int_{0}^{T}|\varphi
^{k}(X_{t})-\varphi(X_{t})|^{p}dt]=0,
\]
which is the desired result.
\end{proof}

Theorem \ref{myq13}  can be seen as a weak dominated convergence
theorem for the $G$-It\^{o} processes. By this result, we obtain
\begin{theorem}
If $\varphi$ is a
$\mathbb{R}^{n}$-valued Borel measurable function of polynomial growth,
then  we have $(\varphi(X_{t}))_{t\leq T}\in
M_{G}^{2}(0,T)$ for each $T>0$.
\end{theorem}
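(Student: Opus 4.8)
The plan is to reduce the claim to Theorem \ref{myq13} and Lemma \ref{myq8} by a truncation-and-approximation argument. First I would observe that since $\varphi$ has polynomial growth, say $|\varphi(x)|\leq \bar{C}(1+|x|^{l})$, it suffices to control $\hat{\mathbb{E}}[\int_{0}^{T}|\varphi(X_{t})|^{2}dt]$ and to exhibit $\varphi(X_{\cdot})$ as an $M_{G}^{2}$-limit of elements already known to lie in $M_{G}^{2}(0,T)$. The natural candidates are the truncations $\varphi^{k}:=(\varphi\wedge k)\vee(-k)$, which are bounded Borel functions converging to $\varphi$ everywhere, hence a.e.

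Next I would argue that each $\varphi^{k}(X_{\cdot})$ belongs to $M_{G}^{2}(0,T)$: being bounded and Borel, $\varphi^{k}$ lies in $L^{p}(\mathbb{R}^{n})$ locally, and more directly one can approximate $\varphi^{k}$ by bounded continuous functions in a way that, via Corollary \ref{my5} (Krylov's estimate with exponent $p\geq n$ and any target $L^{q}$-norm on bounded balls), yields $M_{G}^{1}$-convergence; combined with boundedness this upgrades to $M_{G}^{2}(0,T)$ exactly as in the proof of Lemma \ref{myq8}(ii). Then I would apply Theorem \ref{myq13} with the sequence $(\varphi^{k})_{k\geq1}$, which satisfies the hypothesis $|\varphi^{k}(x)|\leq \bar{C}(1+|x|^{l})$ uniformly in $k$ and converges a.e.\ to $\varphi$; taking $p=2$ in that theorem gives
\[
\lim_{k\rightarrow\infty}\hat{\mathbb{E}}\Big[\int_{0}^{T}|\varphi^{k}(X_{t})-\varphi(X_{t})|^{2}dt\Big]=0.
\]
Since $M_{G}^{2}(0,T)$ is a closed subspace of $\mathbb{M}^{2}(0,T)$ and each $\varphi^{k}(X_{\cdot})\in M_{G}^{2}(0,T)$, the limit $\varphi(X_{\cdot})$ lies in $M_{G}^{2}(0,T)$ as well; here one also needs $\varphi(X_{\cdot})\in\mathbb{M}^{2}(0,T)$, i.e.\ $\hat{\mathbb{E}}[\int_{0}^{T}|\varphi(X_{t})|^{2}dt]<\infty$, which follows from the polynomial growth of $\varphi$ and the moment estimates for $G$-It\^{o} processes (the quantity $\hat{\mathbb{E}}[(1+|X_{t}|^{l})^{2p}]$ is finite and integrable in $t$, as already used in the proof of Theorem \ref{myq13}).

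The only mild subtlety I anticipate is the very first step — verifying $\varphi^{k}(X_{\cdot})\in M_{G}^{2}(0,T)$ for the bounded truncations. Corollary \ref{my5} and Lemma \ref{myq8} are stated for $L^{p}$ functions on all of $\mathbb{R}^{n}$, whereas a bounded Borel function need not be globally integrable; the fix is to note that $X_{t}$ has finite moments so one may localize to balls $\{|x|\leq N\}$, apply the $L^{p}$-theory there, and send $N\to\infty$ controlling the tail by $\hat{\mathbb{E}}[|X_{t}|^{l}]/N$-type bounds, precisely the estimate appearing in the proof of Theorem \ref{myq13}. Once that is in place the rest is a routine closedness argument, so the proof is short.
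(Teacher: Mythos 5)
Your proposal is correct and follows essentially the same route as the paper: an a.e.-convergent, uniformly polynomially bounded approximating sequence, Theorem \ref{myq13} with $p=2$, and closedness of $M_{G}^{2}(0,T)$ in $\mathbb{M}^{2}(0,T)$. The only real difference is the choice of approximants: the paper takes continuous functions with compact support, which lie in $M_{G}^{2}(0,T)$ immediately by Theorem \ref{quasi-c}, whereas your truncations $(\varphi\wedge k)\vee(-k)$ are merely bounded Borel, so your first step requires the extra localization to balls via Lemma \ref{myq8}/Corollary \ref{my5} and a moment bound on $X$ that you sketch --- this does work under (B1)--(B3), but it is an avoidable detour.
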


\begin{proof}
We can find a sequence of continuous functions $(\varphi^{k})_{k\geq1}$ with compact support, such that $\varphi^{k}$ converges to
$\varphi$ a.e. and $|\varphi^{k}(x)|\leq \bar{C}(1+|x|^{l})$, where
$\bar{C}$, $l$ are constants independent of $k$. Then by Theorem
\ref{myq13}, for each $T>0$, we conclude that
\[
\lim \limits_{k\rightarrow
\infty}\mathbb{\hat{E}}[\int_{0}^{T}|\varphi
^{k}-\varphi|^{2}(X_{t})dt]=0.
\]
Since $(\varphi^{k}(X_{t}))_{t\leq T}\in M_{G}^{2}(0,T)$ for each
$k$ by
Theorem \ref{quasi-c}, we derive that $(\varphi(X_{t}))_{t\leq T}\in M_{G}%
^{2}(0,T)$ and this completes the proof.
\end{proof}

\textbf{Acknowledgement}: The authors would like to thank Prof. Shige Peng for   his helpful discussions and
suggestions. We also thank the anonymous reviewer for carefully reading the manuscript and
giving many valuable suggestions.


\begin{thebibliography}{99}


\bibitem {DHP11}Denis, L., Hu, M. and Peng S. (2011) Function spaces and
capacity related to a sublinear expectation: application to $G$-Brownian
motion paths. Potential Anal., 34, 139-161.


\bibitem{HJ2} { Hu, M.} and  { Ji, S.}  (2014) {Dynamic Programming Principle for Stochastic
Recursive Optimal Control Problem under
$G$-framework,} in arxiv:1410.3538.

\bibitem{HJPS}  Hu, M.,  Ji, S., Peng, S. and Song, Y. (2014) {Backward stochastic differential equations
driven by $G$-Brownian motion}. Stochastic Processes and their Applications, 124, 759-784.

\bibitem{HJPS1} Hu, M.,  Ji, S., Peng, S. and Song, Y. (2014) {Comparison theorem, Feynman-Kac formula
and Girsanov transformation for BSDEs driven
by $G$-Brownian motion}. Stochastic Processes and their Applications, 124,  1170-1195.

\bibitem {HJY}Hu, M.,  Ji, S. and  Yang, S. (2014) {A stochastic recursive optimal
control problem under the $G$-expectation framework}, Appl. Math. Optim.,
70, 253-278.

\bibitem {HP09} Hu, M. and Peng, S. (2009) On representation theorem of
$G$-expectations and paths of $G$-Brownian motion. Acta Math. Appl. Sin. Engl.
Ser., 25(3), 539-546.


\bibitem{KV1}  Krylov, N.V. (1980) Controlled diffusion processes, Springer Verlag.

\bibitem{KV} Krylov, N. V. (1987) On estimates of the minimum of a solution of a parabolic equation
and estimates of the distribution of a semimartingale. Mathm. GSSR Sbornik, 58(1),
207-221.





\bibitem {L-P}Li, X. and Peng, S. (2011) Stopping times and related
It\^o's calculus with $G$-Brownian motion. Stochastic Processes and their
Applications, 121, 1492-1508.

\bibitem{MC} Martini, C. (2000) On the marginal laws of one-dimensional stochastic integrals with
uniformly elliptic integrand.  Annales de l'Institut Henri Poincar\'{e}(B), Probability and Statistics,  36(1), 35-43.


\bibitem {Peng2004}Peng, S. (2004) Filtration consistent nonlinear expectations
and evaluations of contingent claims. Acta Mathematicae Applicatae
Sinica, 20, 1-24.


\bibitem{Peng2005} Peng, S. (2005) {Nonlinear expectations and
nonlinear Markov chains.} Chin. Ann. Math., 26B(2), 159-184.

\bibitem {P07a}Peng, S. (2007) $G$-expectation, $G$-Brownian Motion and
Related Stochastic Calculus of It\^o type. Stochastic analysis and
applications, 541-567, Abel Symp., 2, Springer, Berlin.

\bibitem {P08a}Peng, S. (2008) Multi-dimensional $G$-Brownian motion and
related stochastic calculus under $G$-expectation. Stochastic Processes and
their Applications, 118(12), 2223-2253.

\bibitem {P10}Peng, S. (2010) Nonlinear expectations and stochastic
calculus under uncertainty, arXiv:1002.4546v1.

\bibitem {PengICM2010}Peng, S. (2010) Backward stochastic differential
equation, nonlinear expectation and their applications, in Proceedings of the
International Congress of Mathematicians Hyderabad, India.
281-307.


\bibitem {Song11}Song, Y. (2011) Some properties on $G$-evaluation and its
applications to $G$-martingale decomposition. Science China
Mathematics, 54, 287-300.

\bibitem {Song12}Song, Y. (2012) Uniqueness of the representation for
$G$-martingales with finite variation. Electron. J. Probab, 17,
1-15.

\bibitem {WZ} Wang, F. and Zheng, G. (2014) Some sample path properties of $G$-Brownian motion, in arxiv: 1407.0211.

\end{thebibliography}
\end{document}